\newcommand{\be}{\begin{equation}}
\newcommand{\ee}{\end{equation}}
\newcommand{\beq}{\begin{eqnarray}}
\newcommand{\eeq}{\end{eqnarray}}
\renewcommand*{\eqref}[1]{%
  \hyperref[{#1}]{\textup{\tagform@{\ref*{#1}}}}%
}
\newtheorem{thm}{Theorem}[section]
\newtheorem{lma}{Lemma}[section]
\newtheorem{prop}{Proposition}[section]
\newtheorem{cor}{Corollary}[section]
\theoremstyle{remark}
\newtheorem{rem}{Remark}[section]
\numberwithin{equation}{section}
\def\be{\begin{equation}}
\def\ee{\end{equation}}
\def\bee{\begin{equation*}}
\def\eee{\end{equation*}}
\def\lf{\left}
\def\ri{\right}
\newcommand{\de}{\partial}
\newcommand{\Ric}{\mathrm{Ric}}
\def\Ric{\text{\rm Ric}}
\def\la{\langle}
\def\ra{\rangle}
\def\p{\partial}
\def\e{\varepsilon}
\def\a{{\alpha}}
\def\b{{\beta}}
\def\R{\mathbb{R}}
\def\mH{\mathcal{H}}
\def\mF{\mathcal{F}}
\def\Na{\nabla}
\def\S{\Sigma}
\def\div{\mathrm{div}}
\def\loc{\mathrm{loc}}
\begin{document}

\title[]
{Monotonicity of the $p$-Green functions}

 \author{Pak-Yeung Chan}
\address[Pak-Yeung Chan]{Department of Mathematics, University of California, San Diego, La Jolla, CA 92093}
\email{pachan@ucsd.edu}

\author{Jianchun Chu}
\address[Jianchun Chu]{School of Mathematical Sciences, Peking University, Yiheyuan Road 5, Beijing, P.R.China, 100871}
\email{jianchunchu@math.pku.edu.cn}

\author{Man-Chun Lee}
\address[Man-Chun Lee]{Department of Mathematics, The Chinese University of Hong Kong, Shatin, N.T., Hong Kong }
\email{mclee@math.cuhk.edu.hk}

\author{Tin-Yau Tsang}
\address[Tin-Yau Tsang]{Department of Mathematics, University of California, Irvine, CA 92697 }
\email{tytsang@uci.edu}

\renewcommand{\subjclassname}{
\textup{2020} Mathematics Subject Classification}
\subjclass[2020]{Primary 53C21}

\date{\today}

\begin{abstract} On a complete  $p$-nonparabolic $3$-dimensional manifold with non-negative scalar curvature and vanishing second homology, we establish a sharp monotonicity formula for the proper $p$-Green function along its level sets for $1<p<3$. This can be viewed as a generalization of the recent result by Munteanu-Wang \cite{MunteanuWang2021} in the case of $p=2$. No smoothness assumption is made on the $p$-Green function when $1<p\leq 2$. Several rigidity results are also proven.
\end{abstract}

\keywords{$p$-Green function, monotonicity}

\maketitle

\section{Introduction}
The study of monotonic quantities plays a crucial role in Riemannian geometry. One fundamental example is the classical Bishop-Gromov volume comparison theorem which says that on a complete manifold $M^n$ with $\Ric\ge (n-1)K$, the function
\begin{equation}
\frac{\text{Vol}\left(B_r(p)\right)}{\text{V}_K(r)}
\end{equation}
is monotone non-increasing in $r$, where $K\in \R$ and $\text{V}_K(r)$ denotes the volume of the ball of radius $r$ in simply connected space form of constant curvature $K$. The monotone property does not only provide bounds for the quantity, but also rules out the complexity of the function and guarantees the existence of the limit of the monotonic quantity as its parameter approaches a certain number, which provides a lot of geometric information, e.g. asymptotic volume ratio.
On a complete non-parabolic manifold $M^n$ with non-negative Ricci curvature, Colding \cite{Colding2012} proved several monotonicity formulas for the function
\begin{equation}\label{old mono}
r^{1-n}\int_{\{b=r\}} |\nabla b|^{\beta}\, dA,
\end{equation}
where $\beta=3$, $b=G^{1/(2-n)}$ and $G$ is the Green function of the Laplacian $\Delta$ (see also \cite{ColdingMinicozzi2014} for the generalization to the cases of $\beta\ge \frac{2n-3}{n-1})$. The result was then used by Colding-Minicozzi \cite{ColdingMinicozzi2014b} to study the uniqueness of tangent cones of Einstein manifolds. Monotonic quantities are also important in the field of PDEs, for instance, Garofaro-Lin \cite{GarofaloLin1986, GarofaloLin1987} proved the unique continuation property of elliptic operator by showing the monotonicity of an Almgren frequency type function, see also \cite{AFM20, AFM19, AM17, AM20, Colding2012, ColdingMinicozzi2013, ColdingMinicozzi2014, MunteanuWang2021} and references therein for detailed accounts on the monotonicity formulas and their applications.

\medskip

Recently, Stern \cite{Stern2019} has developed a new technique to connect the geometry of $M$ to the level sets of its harmonic functions or harmonic forms. This provides a new pathway to study complete $3$-dimensional manifolds with non-negative scalar curvature $R_{M}\ge 0$. The method is far-reaching and has been proven to be successful in view of the recent progresses in \cite{AMO2021, BKKS2019, ChodoshLi2021, MunteanuWang2021, MunteanuWang2022}. In particular, Bray-Kazaras-Khuri-Stern \cite{BKKS2019} introduced a new formula relating  certain integral of the harmonic functions to the ADM mass of an asymptotically flat manifold and give an alternative proof to the positive mass theorem in dimension $3$ (see also \cite{AMO2021} for a proof using monotonicity of harmonic function). Motivated by the results in \cite{Colding2012, ColdingMinicozzi2014}, Munteanu-Wang \cite{MunteanuWang2021, MunteanuWang2022} investigated the quantity
\begin{equation}\label{mono quantity}
t^{-1}\int_{\{z:\, G(x_0,z)=t\}}|\nabla G(x_0,\cdot)|^2\, dA-4\pi t,
\end{equation}
where $G(x_0,\cdot)$ is the Green function based at $x_0\in M$. They showed that the above quantity is non-increasing with respect to $t$, established different comparison theorems under scalar curvature and rough Ricci curvature bounds and gave some geometric applications. The quantity in \eqref{mono quantity} is closely related to the one in \eqref{old mono} when $n=3$ and $\beta=2$. Later, Chodosh-Li \cite{ChodoshLi2021} applied the method to study Green functions on minimal hyper-surfaces in $\R^4$ and proved the conjecture that any complete, two-sided, stable minimal hyper-surface in $\R^4$ must be a hyper-plane. In \cite{AMO2021}, Agostiniani-Mazzieri-Oronzio also studied an analogous monotone quantity involving the Green function and gave an alternative proof to the positive mass theorem in dimension 3.

\medskip

In the PDE theory, the $p$-Laplacian $\Delta_p u:=\div\left(|\nabla u|^{p-2}\nabla u\right)$ is a natural extension of the classical Laplace operator $\Delta$. In particular,  $\Delta_2=\Delta$. It shares many fundamental properties on the spectral and function theory with the standard Laplacian $\Delta$, e.g. gradient estimates, eigenvalue estimates and comparison, existence of Green functions (\cite{Cheng1975, ChengYau1975, LiTam1992}), for instance, see \cite{Holopainen1999, KotschwarNi2009, WangZhang2011, NaberValtorta2014, SungWang2014,  MunteanuWangL2019}. On the other hand, the $p$-harmonic functions can also be used to study
the inverse mean curvature flow on $\mathbb{R}^n$ (\cite{HuiskenIlmanen1999}). Indeed, if $u$ solves the $p$-harmonic equation, then $v=(1-p)\log u$ satisfies 
\begin{equation}\label{IMCF}
\div\left(|\nabla v|^{2-p}{\nabla v}\right)=|\nabla v|^p.
\end{equation}
This is closely related to the level set formulation of inverse mean curvature flow in $\R^n$ when $p=1$ (\cite{HuiskenIlmanen1999}). If $v$ is in addition proper, the regular level sets of $v$ give a family of compact hyper-surfaces which evolve by the inverse mean curvature flow \cite{HuiskenIlmanen1999, KotschwarNi2009, MariRigoliSetti2019}. Moser \cite{Moser2007, Moser2008, Moser2015} constructed a weak solution to \eqref{IMCF} for $p=1$ by taking the limit $v:=\lim_{p\to 1^+}(1-p)\log u_p$, where $u_p$ is a positive $p$-harmonic function with suitable boundary condition (see also \cite{KotschwarNi2009, MariRigoliSetti2019}). More recently, Agostiniani-Mazzieri-Oronzio \cite{AMO2021} have outlined an approach based on studying the monotonicity of $p$-harmonic functions for $p\to 1$ to show the Riemannian Penrose inequality in dimension $3$.

Motivated by the work of Munteanu-Wang \cite{MunteanuWang2021} and the geometric applications of $p$-harmonic functions, we investigate the monotonicity of the $p$-Green functions under non-negative scalar curvature assumption. By studying the regularized $p$-harmonic functions, we establish a monotonicity formula of the $p$-Green functions. This generalizes the monotonicity formula of Green functions in \cite{MunteanuWang2021} to the general $p$-Green functions.

The main result in this work is the following sharp monotonicity formula for comparison with the Euclidean space.
\begin{thm}\label{main-Thm}
Suppose that $(M^3,g)$ is a $3$-dimensional complete non-compact Riemannian manifold such that
$R_{M}\ge 0$ and $H_2(M^3,\mathbb{Z})=0$, where $R_{M}$ denotes the scalar curvature. Let $\hat{u}(x)=G(x_{0},x)$ be the minimal positive $p$-Green function on $M$ for $ 1<p\leq 2$ and $x_{0}\in M$ such that $\hat{u}\to 0$ as $x\to +\infty$. For any regular value $t$ of $\hat{u}$, define
\begin{equation}
F(t):=\int_{\Sigma_t} |\nabla \hat{u}|^2 \; dA, \ \ \Sigma_t:=\{\hat{u}=t\}.
\end{equation}
Then we have the following monotonicity formulas:
\begin{enumerate}\setlength{\itemsep}{1mm}
\item[\bf (a)] For any regular value $t>0$.
\begin{equation*}
    F'(t)\leq 4\pi \left(\frac{3-p}{p-1}\right)^2 t+t^{-1} F(t);
\end{equation*}
\item[\bf (b)] For any regular values $t_1<t_2$,
\begin{equation*}
    t_{2}^{-1}F(t_{2})-4\pi\left(\frac{3-p}{p-1}\right)^2t_{2} \leq t_{1}^{-1}F(t_{1})-4\pi\left(\frac{3-p}{p-1}\right)^2t_{1}.
\end{equation*}
\end{enumerate}
Moreover, we also have the following rigidity results:
\begin{enumerate}\setlength{\itemsep}{1mm}
\item[\bf (a')]  If the equality in {\bf (a)} holds for some regular value $t_0>0$, then $t$ is regular for all $t>t_0$ and the superlevel set $\{x\in M: \hat{u}(x)>t_0 \}$ is isometric to the Euclidean ball;
\item[\bf (b')]  If the equality in {\bf (b)} holds for some regular values $s_{0}<t_{0}$, then $t$ is regular for all $t>s_{0}$ and the superlevel set $\{x\in M: \hat{u}(x)>s_0 \}$ is isometric to the Euclidean ball.
\end{enumerate}
\end{thm}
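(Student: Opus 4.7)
My strategy is to adapt to the $p$-Laplacian the Stern level-set technique that Munteanu--Wang used in the case $p=2$. Since $\hat u$ is only $C^{1,\alpha}$ in general and may have a nontrivial critical set when $1<p<2$, the first step is to regularize. Fix an exhaustion $\Omega_k \Subset M$ containing $x_0$ and, on each $\Omega_k \setminus B_{\delta}(x_0)$, solve the non-degenerate equation
\[
\mathrm{div}\!\left( (|\nabla u|^2 + \varepsilon)^{\frac{p-2}{2}} \nabla u \right) = 0
\]
with Dirichlet data matching the singular profile at $x_0$ and vanishing on $\partial\Omega_k$. Classical elliptic theory produces smooth solutions $u_{\varepsilon,k,\delta}$, and the capacity / comparison estimates of Kichenassamy--Veron, Holopainen, and Kotschwar--Ni allow us to send $\varepsilon, \delta \to 0$ and $k \to \infty$ and recover $\hat u$ in $C^{1,\alpha}_{\mathrm{loc}}$. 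I carry out all subsequent computations for the smooth approximants and pass to the limit only at the end. For a regular value $t$, the coarea / first-variation formula gives
\[
F'(t) \;=\; \int_{\Sigma_t} \frac{2\, \nabla^2 u(\nu, \nu)}{|\nabla u|} \, dA \;-\; \int_{\Sigma_t} H\, |\nabla u| \, dA,
\]
where $\nu = \nabla u / |\nabla u|$ and $H$ denotes the mean curvature of $\Sigma_t$ with respect to $-\nu$.

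The heart of the argument is a $p$-Stern identity on the regular $2$-surface $\Sigma_t$. Starting from the traced Bochner formula for $|\nabla u|$, I split $|\nabla^{2}u|^{2}$ into tangential and normal pieces and use the $p$-harmonic relation $\Delta u = (2-p)\, \nabla^2 u(\nu, \nu)$ to eliminate $\Delta u$ in favor of the Hessian along $\nu$. Combined with the Gauss equation on $\Sigma_t$, integrating by parts yields an identity of the schematic form
\[
\int_{\Sigma_t} |\nabla u|\, K_{\Sigma_t}\, dA \;=\; \tfrac{1}{2}\! \int_{\Sigma_t}\! R_M\, |\nabla u|\, dA \;+\; \int_{\Sigma_t}\! \big( \text{sign-definite Hessian terms} \big)\, dA,
\]
in which the non-negativity of the last integrand is supplied by the refined Kato inequality for $p$-harmonic functions (as used by Kotschwar--Ni and Wang--Zhang). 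Applying Gauss--Bonnet $\int_{\Sigma_t} K_{\Sigma_t}\, dA = 2\pi\, \chi(\Sigma_t)$ and invoking the hypothesis $H_2(M^{3}, \mathbb{Z}) = 0$ together with properness of $\hat u$ (which makes every $\Sigma_t$ null-homologous, bounds a compact region, and, via a Morse / surgery argument, forces $\chi(\Sigma_t) \le 2$), this identity combines with the formula for $F'(t)$ to give
\[
F'(t) \;\le\; 4\pi\! \left( \frac{3-p}{p-1} \right)^{\!2}\! t \;+\; t^{-1} F(t),
\]
with the constant pinned down by comparison with the Euclidean profile $\hat u_{\mathrm{Euc}}(x) = c\, |x|^{(p-3)/(p-1)}$, on which every estimate is an equality. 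Part (b) follows immediately, since $\Phi(t) := t^{-1} F(t) - 4\pi\! \left( \tfrac{3-p}{p-1} \right)^{2} t$ satisfies $\Phi'(t) \le 0$ at every regular value, while a Sard argument handles the null set of critical values. The passage $\varepsilon, \delta \to 0$, $k \to \infty$ is then carried out using $C^{1,\alpha}_{\mathrm{loc}}$ convergence together with uniform $L^{\infty}$ gradient bounds to transfer both $F(t)$ and the above inequality to $\hat u$ for almost every regular value $t$.

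For the rigidity statements, equality in (a) at some $t_0 > 0$ must propagate through each step above. Equality in the refined Kato inequality forces the tangential part of $\nabla^2 \hat u$ along $\Sigma_{t_0}$ to be a scalar multiple of the induced metric (so $\Sigma_{t_0}$ is totally umbilical); equality in the scalar-curvature step forces $R_M \equiv 0$ on $\{\hat u > t_0\}$; and equality in $\chi(\Sigma_{t_0}) \le 2$ plus Gauss--Bonnet forces $\Sigma_{t_0}$ to be connected of genus zero. Together these conditions identify $\Sigma_{t_0}$ as a round $2$-sphere, and flowing along $\nabla \hat u / |\nabla \hat u|^{2}$ propagates the equality to every $t \ge t_0$; this simultaneously shows every such $t$ is regular and that $(\{\hat u > t_0\}, g)$ is isometric to a Euclidean ball. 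Case (b$'$) reduces to (a$'$): $\Phi(s_0) = \Phi(t_0)$ and $\Phi' \le 0$ force $\Phi$ to be constant on $[s_0, t_0]$, hence equality in (a) at every regular value there, and (a$'$) applied at $s_0$ concludes. The principal obstacle is the interplay between nonlinearity and low regularity: the $p$-Bochner formula carries several terms of indefinite sign, and arranging them so that $R_M$ emerges with a positive coefficient and a non-negative Hessian remainder requires careful use of $\Delta u = (2-p)\, \nabla^2 u(\nu, \nu)$ together with the refined Kato inequality. On top of this, controlling the passage $\varepsilon \to 0$ when $\hat u$ may fail to be smooth on its critical set requires capacity estimates, Sard's theorem for $C^{1,\alpha}$ maps, and the absolute continuity of $F$ to restrict the final inequality to the full-measure set of regular values of the limit.
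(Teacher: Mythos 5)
Your high-level strategy---regularize the $p$-Laplacian, run a Bochner/Kato/Gauss--Bonnet argument on level sets of the smooth approximants, and pass to the limit---is the same one the paper takes. The genuine gaps in your proposal are concentrated precisely in the passage-to-the-limit step, which is where the real difficulty of the $p\ne 2$ case lives.

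The most serious gap is your reliance on ``Sard's theorem for $C^{1,\alpha}$ maps.'' For a real-valued function on a $3$-manifold, the Morse--Sard theorem requires $C^3$ regularity; a $C^{1,\alpha}$ function can have a set of critical values of positive measure, so you cannot conclude that almost every $t$ is regular for $\hat u$, nor handle critical values of $\hat u$ by a Sard argument. The paper flags this explicitly in the introduction (``Since Sard's Theorem is not known to be applicable, the straightforward adoption of level set approach fails'') and builds its entire regularization machinery to sidestep it: Sard is applied only to the smooth approximants $u_\epsilon$, and the conclusions are transferred to $\hat u$ via the carefully proved convergence $F_\epsilon\to F$ and the level-set comparison Lemma (Lemma~\ref{integral approximation}). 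Relatedly, you claim part~(b) ``follows immediately'' from $\Phi'(t)\le 0$ at regular values, but $F$ is not known to be absolutely continuous, so a pointwise derivative inequality does not integrate to a monotonicity statement. The paper avoids this by proving that $F_\epsilon$ is locally Lipschitz (Lemma~\ref{local Lipschitz}), applying the fundamental theorem of calculus to $F_\epsilon$, and only then sending $\epsilon\to 0$. A third, smaller omission: your computation uses $\Delta u=(2-p)\,\nabla^2u(\nu,\nu)$, which holds for the unregularized equation, whereas the smoothed solutions satisfy $\Delta u_\epsilon = -\eta_\epsilon(|\nabla u_\epsilon|)\,(u_\epsilon)_{\nu\nu}$ with $\eta_\epsilon(s)=(p-2)s^2/(s^2+\epsilon)\neq p-2$; the discrepancy produces bulk and boundary error terms (the paper's $\mathbf{E}_{\epsilon,t_1,t_2}$ and $C\sqrt{\epsilon}$) that must be tracked and shown to vanish, and your sketch does not account for them. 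Your three-parameter regularization scheme (on $\Omega_k\setminus B_\delta(x_0)$ with prescribed singular data) is also more elaborate than necessary: the paper simply solves the regularized equation on a fixed compact annular region $D\supset\{a<\hat u<b\}$ with boundary data equal to $\hat u$, which avoids reconstructing the Green function from scratch.
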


\begin{rem}
If $p\in (2,3)$ and $\hat u$ is a-priori smooth on $M$, then Theorem \ref{main-Thm} still holds, see Remark \ref{p 2 3}. It would be interesting to see whether Theorem \ref{main-Thm} holds without such smoothness assumption.
\end{rem}

The arguments in \cite{MunteanuWang2021, ChodoshLi2021} are based on Stern \cite{Stern2019} which requires higher regularity of the Green function $G(x_0,\cdot)$ of $\Delta$. More precisely, since $G(x_0,\cdot)\in C^{\infty}_{\loc}(M\setminus\{x_{0}\})$, then Sard's theorem shows that the set of critical values of $G(x_0,\cdot)$ has measure zero. Roughly speaking, this means the set of critical values of $G(x_0,\cdot)$ can be ignored. At the regular value, the quantity in \eqref{mono-11} behaves very well and so the crucial formula of Stern \cite{Stern2019} can be applied.

However, for $p\neq 2$, the $p$-Laplace equation becomes degenerate quasi-linear elliptic equation. In general, the optimal regularity one can expect 
is only $C^{1,\a}_{\loc}\cap W^{2,2}_{\loc}$, see \cite{Tolksdorf1984, ManfrediWeitsman1988}. The behaviour of the set of regular values is generally unclear. Since Sard's Theorem is not known to be applicable, the straight forward adoption of level set approach fails. When $p\in (1,2]$, we are able to overcome these by studying the regularized equations and establishing the almost monotonicity formulas. We believe that the regularization method in this work will be useful to study the $p$-harmonic functions in more general setting. This might be useful in the Green function approach to the Penrose inequality in three dimension outlined in \cite{AMO2021}.

Unlike the harmonic case ($p=2$), $F(t)$ might not be defined almost everywhere since the set of regular values of $\hat u$ might not be dense. In the following theorem, we give a natural extension of $F(t)$ on $(0,+\infty)$ and prove the similar monotonicity {\bf (b)} of Theorem \ref{main-Thm}.

\begin{thm}\label{generalized F Thm}
Under the assumptions in Theorem~\ref{main-Thm}, for any $t>0$, we define
\begin{equation}
F(t):=\int_{\Sigma_t\cap\{|\nabla \hat{u}|>0\}} |\nabla \hat u|^2 \; dA, \ \ \Sigma_t:=\{\hat{u}=t\}.
\end{equation}
Then the quantity
\begin{equation}\label{mono-11}
t^{-1}F(t)-4\pi\left(\frac{3-p}{p-1}\right)^2t
\end{equation}
is continuous and non-increasing with respect to $t$.
\end{thm}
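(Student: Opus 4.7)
Set $C := 4\pi\left(\frac{3-p}{p-1}\right)^{2}$ and $\Phi(t) := t^{-1} F(t) - C t$. By Theorem~\ref{main-Thm}\textbf{(b)}, $\Phi$ is non-increasing on the set of regular values of $\hat{u}$. The goal is to extend this to a pointwise non-increasing and continuous statement on all of $(0,\infty)$; this is delicate because the $C^{1,\alpha}$ function $\hat{u}$ need not have dense regular values.

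\textbf{Step 1 (Coarea identity).} By the coarea formula applied to $\hat{u}$ with integrand $|\nabla\hat{u}|^{2}$,
\begin{equation*}
\int_{s}^{t} F(\tau)\,d\tau \;=\; \int_{\{s<\hat{u}<t\}} |\nabla\hat{u}|^{3}\,dV \qquad \text{for all } 0<s<t,
\end{equation*}
so $F\in L^{1}_{\mathrm{loc}}(0,\infty)$ and the indefinite integral is absolutely continuous. In particular the pointwise level-set definition of $F$ agrees with the Lebesgue density of this integral at almost every $t$.

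\textbf{Step 2 (a.e.\ monotonicity).} Recall the smooth regularized $p$-harmonic functions $u_{\epsilon}$ constructed in the proof of Theorem~\ref{main-Thm}. Since $u_{\epsilon}\in C^{\infty}$, Sard's theorem applies and Theorem~\ref{main-Thm}\textbf{(b)} gives that $\Phi_{\epsilon}(t):=t^{-1}F_{\epsilon}(t)-Ct$ is non-increasing on all of $(0,\infty)$. The convergence $u_{\epsilon}\to\hat{u}$ in $C^{1}_{\mathrm{loc}}$, combined with the coarea identity of Step~1, yields $F_{\epsilon}\to F$ in $L^{1}_{\mathrm{loc}}(0,\infty)$. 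Passing to the limit in the pointwise inequality $\Phi_{\epsilon}(t_{1})\ge\Phi_{\epsilon}(t_{2})$ produces a non-increasing function $\bar\Phi$ that equals $\Phi$ almost everywhere.

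\textbf{Step 3 (Continuity, and upgrade to pointwise).} To promote the a.e.\ statement to a genuine one, show that $F$ is continuous. For $t_{n}\to t_{0}$ and a threshold $\delta>0$, split $\Sigma_{t_{n}}\cap\{|\nabla\hat{u}|>0\}$ into the regular part $\{|\nabla\hat{u}|>\delta\}$ and the near-critical part $\{0<|\nabla\hat{u}|\le\delta\}$. On the open set $\{|\nabla\hat{u}|>\delta\}$ the function $\hat{u}$ is a $C^{1,\alpha}$ submersion, and the implicit function theorem gives local graphical convergence, hence
\begin{equation*}
\int_{\Sigma_{t_{n}}\cap\{|\nabla\hat{u}|>\delta\}} |\nabla\hat{u}|^{2}\,dA \;\longrightarrow\; \int_{\Sigma_{t_{0}}\cap\{|\nabla\hat{u}|>\delta\}} |\nabla\hat{u}|^{2}\,dA.
\end{equation*}
The near-critical part is controlled by the coarea estimate
\begin{equation*}
\int_{a}^{b}\!\int_{\Sigma_{\tau}\cap\{0<|\nabla\hat{u}|\le\delta\}} |\nabla\hat{u}|^{2}\,dA\,d\tau \;\le\; \delta^{3}\cdot\mathrm{Vol}(\{a<\hat{u}<b\}),
\end{equation*}
which, combined with Lebesgue differentiation and the absolute continuity from Step~1, forces the near-critical contribution to vanish uniformly in a neighborhood of $t_{0}$ as $\delta\to 0^{+}$. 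This yields continuity of $F$, and together with Step~2 it forces $\Phi=\bar\Phi$ everywhere, proving pointwise monotonicity.

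\textbf{Main obstacle.} Step~3 is the technical heart. Since $\hat{u}$ is only $C^{1,\alpha}$, the critical set $\{|\nabla\hat{u}|=0\}$ may be complicated and level sets can accumulate $|\nabla\hat{u}|^{2}$-mass on it. The averaged coarea control above is not a priori a pointwise estimate; the difficulty is upgrading it, uniformly in $t$ near $t_{0}$ for \emph{every} $t_{0}$ (not just a.e.\ $t_{0}$), to a genuine pointwise bound on the near-critical part, and this is where the careful interplay between the regularization $u_{\epsilon}$ and the coarea identity must be exploited.
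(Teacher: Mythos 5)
There is a genuine gap in Step~2, and the difficulty the student flags in Step~3 is precisely the place where the paper uses a different (and essential) tool that the proposal is missing.

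\textbf{On Step 2.} You claim that ``Theorem~\ref{main-Thm}\textbf{(b)} gives that $\Phi_\e(t)$ is non-increasing on all of $(0,\infty)$.'' This does not follow: $u_\e$ solves the regularized equation \eqref{regularized-harmonic}, not the $p$-Laplace equation, so it is \emph{not} a $p$-Green function and Theorem~\ref{main-Thm} does not apply to it. What is true for $u_\e$ is only an \emph{almost}-monotonicity with explicit error terms $\mathbf{E}_{\e,t_1,t_2}+C\sqrt{\e}$ (this is exactly Theorem~\ref{monotonicity-regularized-THM} in the paper, proved by redoing the entire Bochner/level-set computation with the $\e$-perturbed coefficient $\eta_\e$). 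So $\Phi_\e$ is only approximately non-increasing for fixed $\e$; deriving the exact statement requires (i) establishing this approximate monotonicity from scratch, (ii) showing the error terms vanish as $\e\to 0$, and (iii) converting the $F'_\e$-level inequality into an $F_\e$-level one via the fundamental theorem of calculus, which needs the local Lipschitz continuity of $F_\e$ (Lemma~\ref{local Lipschitz}). None of this can be bypassed by citing Theorem~\ref{main-Thm}.

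\textbf{On Step 3.} You correctly identify that the near-critical part of $F(t)$ is the crux, and you honestly admit your averaged coarea bound does not directly give a pointwise estimate ``uniformly in $t$ near $t_0$ for every $t_0$.'' The paper closes this exact gap with a different ingredient that your proposal never invokes: the conserved flux
\[
\int_{\Sigma_t\cap\{|\nabla\hat{u}|>0\}}|\nabla\hat{u}|^{p-1}\, dA \le C(\hat{u},b) \quad\text{for all } t,
\]
obtained from the divergence structure of \eqref{regularized-harmonic} (integrate over $\{t<u_\e<b\}$ and let $\e\to 0$; cf.\ Proposition~\ref{Regularity-approximation-stability}(iv), Lemma~\ref{gradient p-1 1}, and \eqref{bound gradient p-1}). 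This gives the \emph{pointwise-in-$t$} tail bound
\[
\int_{\Sigma_t\cap\{0<|\nabla\hat{u}|\leq\delta\}}|\nabla\hat{u}|^{2}\, dA \;\leq\; \delta^{3-p}\int_{\Sigma_t\cap\{|\nabla\hat{u}|>0\}}|\nabla\hat{u}|^{p-1}\, dA \;\leq\; C\,\delta^{3-p},
\]
which is exactly the estimate your averaged coarea argument cannot produce. With this in hand, continuity of $F$ and pointwise convergence $F_\e(t)\to F(t)$ follow (Lemma~\ref{convergence F e and F}), whereas $L^1_{\loc}$ convergence alone, as in your Step~2, leaves the everywhere statement out of reach. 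In short: the proposal's overall skeleton (regularize, show monotonicity for $u_\e$, pass to the limit, upgrade a.e.\ to everywhere via continuity) is sound, but both the almost-monotonicity for $u_\e$ and the conserved-flux bound are missing, and the first is misattributed to an inapplicable theorem.
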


By studying the regularization at infinity more carefully, we prove some comparison inequalities on $F(t)$ and the area of the level set of the $p$-Green function when $1<p\leq 2$. As pointed out by Chodosh-Li \cite[Remark 17]{ChodoshLi2021}, we can obtain a sharp upper bound of the monotonicity quantity under Ricci curvature lower bound. We also slightly generalize the rigidity result in \cite{MunteanuWang2021}.
\begin{cor}\label{main-monot-zerothorder}
Under the assumptions in Theorem~\ref{main-Thm}, if in addition $\Ric\geq -k$ for some constant $k>0$ on $M$,
then we have the following monotonicity formulas:
\begin{enumerate}\setlength{\itemsep}{1mm}
\item[\bf (c)] For any $t>0$.
\begin{equation*}\label{mono-1}
F(t)\leq 4\pi\left(\frac{3-p}{p-1}\right)^2 t^2;
\end{equation*}
\item[\bf (d)] For any regular value $t>0$,
\begin{equation*}\label{mono-2}
\mathrm{Area}\left( \{x\in M: \hat{u}(x)=t \}\right) \geq \left[4\pi \cdot \left(\frac{3-p}{p-1} \right)^2 t^2 \right]^{-\frac{p-1}{3-p}}.
\end{equation*}
\end{enumerate}
Moreover, if the equality on {\bf(c)} or {\bf (d)} holds for some regular value $t_0>0$, then $(M,g)$ is isometric to the Euclidean space. 
\end{cor}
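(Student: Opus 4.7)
The plan is to combine the monotonicity formula from Theorem~\ref{generalized F Thm} with an asymptotic analysis at infinity that exploits the Ricci lower bound. Set
\[
\Phi(t) := t^{-1}F(t)-4\pi\left(\frac{3-p}{p-1}\right)^{2}t,
\]
which by Theorem~\ref{generalized F Thm} is continuous and non-increasing on $(0,+\infty)$. Consequently $\Phi(t)\le \lim_{s\to 0^{+}}\Phi(s)$, so the heart of part~(c) is to show
\[
\lim_{s\to 0^{+}}\Phi(s)\le 0,
\]
and this is where I expect the hypothesis $\Ric\ge -k$ to be used. Small $t$ corresponds to large distance from the pole $x_{0}$, so the task is to control $s^{-1}F(s)$ as the level set escapes to infinity. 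I would try to achieve this through Cheng--Yau/Kotschwar--Ni/Wang--Zhang type gradient estimates for positive $p$-harmonic functions under $\Ric\ge -k$, combined with the flux identity $\int_{\Sigma_{t}}|\nabla\hat u|^{p-1}\,dA=1$ (which follows from integrating $\Delta_{p}\hat u=-\delta_{x_{0}}$ over the superlevel set); alternatively, one may invoke the regularization-at-infinity technique referred to in the introduction, studying capped off $p$-harmonic approximants on an exhausting family of domains and verifying that the boundary contribution passes to a nonpositive limit.

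Granted this asymptotic input, part~(c) is immediate: $\Phi\le 0$ everywhere gives $F(t)\le 4\pi\left(\tfrac{3-p}{p-1}\right)^{2}t^{2}$. Part~(d) then follows from H\"older's inequality on $\Sigma_{t}$ with conjugate exponents $\tfrac{2}{p-1}$ and $\tfrac{2}{3-p}$ (legal since $1<p\le 2<3$):
\[
1=\int_{\Sigma_{t}}|\nabla\hat u|^{p-1}\,dA\le \left(\int_{\Sigma_{t}}|\nabla\hat u|^{2}\,dA\right)^{\!\frac{p-1}{2}}|\Sigma_{t}|^{\frac{3-p}{2}}=F(t)^{\frac{p-1}{2}}|\Sigma_{t}|^{\frac{3-p}{2}},
\]
which rearranges to $|\Sigma_{t}|\ge F(t)^{-\frac{p-1}{3-p}}$; plugging in the bound from (c) yields (d).

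For the rigidity statement, suppose first that equality holds in (c) at some regular value $t_{0}$, i.e.\ $\Phi(t_{0})=0$. Since $\Phi$ is non-increasing and bounded above by $0$, we must have $\Phi(t)=0$ throughout $(0,t_{0}]$. Hence for every regular $s\in(0,t_{0})$ we have equality in Theorem~\ref{main-Thm}(b) with $t_{1}=s$, $t_{2}=t_{0}$, so conclusion~(b') gives that $\{\hat u>s\}$ is isometric to a Euclidean ball. These superlevel sets are nested, each contains the singular point $x_{0}$ of $\hat u$, and they exhaust $M$ as $s\to 0^{+}$; the corresponding isometries can be normalised to send $x_{0}$ to the origin of $\R^{3}$, so passing to the limit identifies $(M,g)$ isometrically with $\R^{3}$. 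If instead equality holds in (d) at a regular $t_{0}$, the chain
\[
|\Sigma_{t_{0}}|\ge F(t_{0})^{-\frac{p-1}{3-p}}\ge\left[4\pi\left(\frac{3-p}{p-1}\right)^{2}t_{0}^{2}\right]^{-\frac{p-1}{3-p}}=|\Sigma_{t_{0}}|
\]
must be saturated, which simultaneously forces equality in the H\"older step (so $|\nabla\hat u|$ is constant on $\Sigma_{t_{0}}$) \emph{and} equality in (c) at $t_{0}$; the latter again gives $(M,g)\cong \R^{3}$ by the previous argument.

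The principal difficulty is the asymptotic $\limsup_{s\to 0^{+}}\Phi(s)\le 0$ since $\Ric\ge -k$ is a comparatively weak hypothesis and one must control the level-set integral of $|\nabla\hat u|^{2}$ uniformly at infinity. Everything else --- the H\"older step, the use of the flux identity, and the reduction of the rigidity to Theorem~\ref{main-Thm}(b') --- is a quick consequence of the structure already in place.
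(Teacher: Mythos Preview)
Your overall architecture is the same as the paper's: use the monotonicity of $\Phi(t)=t^{-1}F(t)-4\pi\big(\tfrac{3-p}{p-1}\big)^2 t$ to reduce {\bf (c)} to controlling $\Phi$ near $t=0$, then deduce {\bf (d)} from H\"older together with the flux identity $\int_{\Sigma_t}|\nabla\hat u|^{p-1}=1$. The H\"older step and the reduction of the {\bf (d)}-rigidity to the {\bf (c)}-rigidity are exactly what the paper does.

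There is, however, a genuine gap at the endpoint $p=2$. Your plan is to combine the Wang--Zhang gradient estimate $|\nabla\hat u|\le C\hat u$ with the flux bound to get
\[
F(t)\ =\ \int_{\Sigma_t}|\nabla\hat u|^{p-1}\,|\nabla\hat u|^{3-p}\ \le\ C\,t^{\,3-p},
\]
and hence $t^{-1}F(t)\le C\,t^{\,2-p}\to 0$. This works only when $p<2$; when $p=2$ the right-hand side is merely bounded and you cannot conclude $\limsup_{t\to 0^+}\Phi(t)\le 0$. The paper handles this borderline case by a different device (following Chodosh--Li): it invokes Theorem~\ref{generalized-main} not with the endpoint parameters $(\beta,\lambda)=(2,2)$ but with a family $(\beta_i,\lambda_i)\to(2,2)$ chosen so that the monotone quantity is $t^{\,\beta_i-\frac{3\lambda_i}{2}}F(t)$ with exponent strictly larger than $-1$; then $t^{\,\beta_i-\frac{3\lambda_i}{2}}F(t)\to 0$ as $t\to 0$ does follow from $F(t)\le Ct$, and one lets $i\to\infty$ afterwards. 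Your ``regularization-at-infinity'' alternative does not supply this missing piece.

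A smaller point concerns the rigidity argument. You invoke {\bf (b')} for each regular $s\in(0,t_0)$ and then let $s\to 0^+$ to exhaust $M$. But {\bf (b')} only upgrades regularity \emph{above} $s$, so you have not yet shown that regular values reach arbitrarily close to $0$; a priori you are confined to the maximal interval $(a,t_0]$ of regular values. The paper proceeds instead through {\bf (a')}: from $\Phi\equiv 0$ on $(0,t_0]$ one gets $\mathcal H(t_0)=0$, so all $t>t_0$ are regular, and then a short modification of the proof of {\bf (a')} (driven by the explicit formula $|\nabla\hat u|=C\hat u^{2/(3-p)}$ along the foliation, which forces $|\nabla\hat u|>0$ down to the boundary of the regular interval and hence pushes $a$ to $0$) finishes the job. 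Your route via {\bf (b')} can be completed in the same spirit, but it needs this extra open-and-closed argument, not merely the assertion that the superlevel sets ``exhaust $M$''.
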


\begin{rem}
As observed in \cite{MunteanuWang2021}, if the $p$-Green function has a better asymptotic at the pole,
$$\liminf_{t\to\infty} \left(t^{-1}F(t)-4\pi\left(\frac{3-p}{p-1}\right)^2 t \right)\geq 0, $$
then {\bf (c)} holds as a equality for all regular value $t>0$ by Theorem~\ref{main-Thm}. In this case, $(M,g)$ is isometric to Euclidean space.   
\end{rem}

This article is organized as follows. In Section $2$, we review the preliminaries of the $p$-Green functions and $p$-parabolicity. We then formulate the regularized $p$-Laplace equations and include different properties and a-priori estimates satisfied by the solutions of these equations, which approximate the $p$-Green functions. In Section $3$, we derive some almost monotonicity formulas. In Section $4$, Theorem \ref{main-Thm}, \ref{generalized F Thm} and Corollary \ref{main-monot-zerothorder} will be established by using these almost monotonicity formulas.

\medskip

{\it Acknowledgement}:
J. Chu was partially supported by Fundamental Research Funds for the Central Universities (No. 7100603592).

\section{Basic Setting}\label{sec: pre}
Let $M^n$ be an $n$-dimensional complete manifold. Let $p\in (1,n]$, a function $u\in W^{1,p}_{\loc}(M)$ is said to be $p$-harmonic if it is a weak solution to the equation
\begin{equation}
\Delta_p u := \div\left(|\nabla u|^{p-2}\nabla u\right)=0.
\end{equation}
Namely for any $\phi\in C^{\infty}_{0}(M)$,
\begin{equation}
\int_M\la \nabla \phi, \nabla u\ra |\nabla u|^{p-2} =0.
\end{equation}
In particular, $u$ is harmonic if $p=2$.  
A function $G:(M\times M)\setminus\mathrm{Diag}(M)\to \mathbb{R}$ is said to be a $p$-Green function if $-\Delta_p G(z,\cdot)=\delta_{z}$, that is to say,
\begin{equation}\label{Greendef}
\int_M |\nabla G(z,\cdot )|^{p-2}\langle\nabla G(z,\cdot ),\nabla \varphi \rangle \, d\mathrm{vol}_g = \varphi(z)
\end{equation}
for any $\varphi\in C^\infty_{0}(M)$. Here $\delta_{z}$ denotes the Dirac mass centred at $z\in M$.

\medskip

A manifold $M$ is called $p$-parabolic if it does not admit a positive $p$-Green function. One necessary condition for $M$ to admit a positive $p$-Green function is a large volume growth (\cite{Varopoulos1981, Grigoryan1983, Grigoryan1985} for $p=2$, \cite{KeselmanZorich1996} for $p\in(1,n]$), i.e.
\begin{equation}\label{Volumegrowth}
\int_{1}^{\infty} \lf( \frac{t}{V(x_0,t)} \ri) ^{\frac{1}{p-1}} dt < \infty,
\end{equation}
for some $x_0\in M$ where $V(x_0,t)$ is the volume of the geodesic ball with radius $t$ centred at $x_0$. Note that it is not a sufficient condition. As pointed out in \cite{Holopainen1999}, one can make a conformal change on $(\R^n,g_{\mathrm{Euc}})$, which is $n$-parabolic, such that \eqref{Volumegrowth} is satisfied but $p$-parabolicity is a conformal-invariant property.

\medskip

When $p=2$, under certain curvature assumptions and a volume comparison condition, one can construct a positive $G$ by compact exhaustion as in \cite{LiTam1992}. Moreover, if \eqref{Volumegrowth} is satisfied, $G(x_0,x)\to 0$ as $x\to +\infty$ for fixed $x_0\in M$. This result was generalised to $p\in (1,n]$ in \cite{Holopainen1992, Holopainen1999}. It was further shown that such construction of positive $p$-Green functions only requires that $M$ satisfies the so-called volume doubling property and a Poincar\'e type inequality. Again, if \eqref{Volumegrowth} is satisfied, $G(x_0,x)\to 0$ as $x\to +\infty$ for fixed $x_0\in M$.  Furthermore, by a comparison theorem for Green functions (\cite{Holopainen1992} for $p=n$, \cite{MariRigoliSetti2019} for $p\in(1,n]$), this $p$-Green function constructed is unique and minimal among positive solutions to \eqref{Greendef}. For example, such $G$ exists on manifolds with finite first Betti number which has non-negative Ricci curvature outside a compact set or asymptotically non-negative sectional curvature.

\medskip

Motivated by this, we will assume that the minimal $p$-Green function $G$ exists and $\hat{u}(x)=G(x_0,x)$ satisfies $\hat{u}\to 0$ as $x\to +\infty$ for fixed $x_0\in M$. As in Theorem \ref{generalized F Thm}, for any $t\in(0,+\infty)$, we define
\begin{equation}
F(t) = \int_{\Sigma_t\cap\{|\nabla\hat{u}|>0\}} |\nabla\hat{u}|^2 \; dA.
\end{equation}

Before we study the monotonicity of $F$, we first recall the asymptotic of the Green function near the pole which is modelled by the Euclidean structure.

\begin{prop}\label{asymptotic-Green-pole}\cite{KichenassamyVeron1986} \cite[Theorem 2.4]{MariRigoliSetti2019}
Let $\mu(r)=(4\pi)^{-\frac{1}{p-1}}\left(\frac{p-1}{3-p}\right)r^{-\frac{3-p}{p-1}}$ and $p\in (1,3)$. The $p$-Green function $G(x_0,x)$ satisfies the following: as $r=d(x,x_0)\to 0$,
\begin{eqnarray*}
G-\mu(r)&=& o\left(r^{-\frac{3-p}{p-1}}\right),\\[1.5mm]
\big|\nabla G-\mu'(r)\nabla r\big|&=&o\left(r^{-\frac{2}{p-1}}\right),\\
\left|\nabla ^2G-\mu''(r)dr\otimes dr-\frac{\mu'(r)}{r}\left(g-dr\otimes dr\right)\right|&=&o\left(r^{-\frac{p+1}{p-1}}\right).
\end{eqnarray*}
\end{prop}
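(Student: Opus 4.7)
My plan rests on three ingredients: verifying that $\mu(r)$ is the $p$-Green function on Euclidean $\mathbb{R}^3$, a blow-up argument in geodesic normal coordinates, and a classification result for isolated singularities of positive $p$-harmonic functions.

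First I would verify that $\mu$ is the fundamental solution on $(\mathbb{R}^3, g_{\mathrm{Euc}})$: a direct computation shows $\mu = A\, r^{-(3-p)/(p-1)}$ is $p$-harmonic on $\mathbb{R}^3 \setminus \{0\}$ for any constant $A$, and a flux computation across a small sphere combined with \eqref{Greendef} uniquely fixes $A = (4\pi)^{-1/(p-1)}(p-1)/(3-p)$, matching $\mu$. Hence $\mu$ is precisely the $p$-Green function on Euclidean $\mathbb{R}^3$ centered at the origin, and the task reduces to showing $G(x_0,\cdot)$ is asymptotic to $\mu$ near $x_0$. I would then pass to normal coordinates $\{y^i\}$ centered at $x_0$, where $g_{ij}(y) = \delta_{ij} + O(|y|^2)$ and the $p$-Laplacian on $(M,g)$ differs from its Euclidean counterpart by terms of relative size $O(|y|^2)$. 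Barrier functions of the form $(1\pm\varepsilon)\mu(r)$ on small annuli, combined with the comparison principle for $p$-harmonic functions and the minimality of $G$, give the two-sided bound $G(x_0,\cdot) \asymp \mu(r)$ as $r \to 0$.

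To upgrade to the sharp $o$-estimate, I would run a blow-up argument. Define the rescaled functions
\[ G_\lambda(y) := \lambda^{(3-p)/(p-1)}\, G\bigl(x_0, \exp_{x_0}(\lambda y)\bigr), \]
which solve a $p$-Laplace equation with respect to the rescaled metric $g_\lambda(y) = g(\lambda y)$ converging to $g_{\mathrm{Euc}}$ in $C^k_{\mathrm{loc}}$. The two-sided bound gives uniform $L^\infty$ control on any annulus $\{1/R \leq |y| \leq R\}$, and Tolksdorf's $C^{1,\alpha}_{\mathrm{loc}}$ regularity for $p$-harmonic functions yields a subsequential limit $G_*$, which is a positive $p$-harmonic function on $\mathbb{R}^3 \setminus \{0\}$ with the prescribed singular profile at the origin and the normalized distributional mass preserved in the limit. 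Invoking the Kichenassamy--V\'eron classification of such singular solutions identifies $G_* \equiv \mu(r)$; independence of subsequence together with rescaling back yields the asymptotic $G - \mu(r) = o(r^{-(3-p)/(p-1)})$.

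Finally, the gradient and Hessian estimates follow by bootstrapping this convergence. Since $\mu'(r) \neq 0$ on $\{r > 0\}$, the linearization of the $p$-Laplace operator around $\mu$ is uniformly elliptic on any fixed annulus bounded away from the origin, so standard Schauder theory for quasi-linear elliptic equations upgrades $C^{1,\alpha}_{\mathrm{loc}}$ convergence $G_\lambda \to \mu$ to $C^{2,\alpha}_{\mathrm{loc}}$ convergence; rescaling back produces the stated $o$-estimates on $|\nabla G - \mu'(r)\nabla r|$ and on the Hessian. The main obstacle is the classification step: ruling out non-radial singular asymptotics for the blow-up limit $G_*$ requires the careful ODE and rigidity analysis in \cite{KichenassamyVeron1986} and its refinement in \cite{MariRigoliSetti2019}, which I would cite as a black box rather than reprove.
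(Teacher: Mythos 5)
The paper does not prove this proposition: it is cited verbatim from Kichenassamy--V\'eron and Mari--Rigoli--Setti, so there is no ``paper's own proof'' to compare against. What you have written is a sensible reconstruction of how the cited results are established, and the overall architecture (normalize the Euclidean model by a flux computation, rescale and blow up at the pole, identify the blow-up limit via the classification of isolated singularities of positive $p$-harmonic solutions, bootstrap to $C^{2,\alpha}$ on annuli where $\nabla\mu\neq 0$) is the standard and correct one. Your scaling is also consistent: $G_\lambda(y)=\lambda^{(3-p)/(p-1)}G(\exp_{x_0}(\lambda y))$ forces $\nabla_y G_\lambda=\lambda^{2/(p-1)}\nabla G$ and $\nabla_y^2 G_\lambda=\lambda^{(p+1)/(p-1)}\nabla^2 G$, which reproduces exactly the three error exponents in the statement.

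Two technical points deserve more care than you give them. First, the initial two-sided bound: $(1\pm\varepsilon)\mu(r)$ is $p$-harmonic for $g_{\mathrm{Euc}}$ but not for the curved metric, where it incurs an error of relative size $O(r^2)$. You therefore cannot use it directly as a barrier on a full punctured ball; you must either add a lower-order corrector, work on shrinking annuli $\{\rho\leq r\leq 2\rho\}$ where the relative error is controlled and iterate, or compare with the Green function of an auxiliary Euclidean-type metric. Second, for the blow-up limit $G_*$ you assert that ``the normalized distributional mass is preserved in the limit,'' which is the crucial input needed to pin down the constant $A$ via the classification. This needs the flux-conservation argument: the $p$-flux $\int_{S_r}|\nabla G|^{p-2}\partial_\nu G\,dA$ is $r$-independent (equal to $-1$) by the weak formulation \eqref{Greendef}, and it passes to the limit because of the $C^{1,\alpha}$ convergence on annuli bounded away from the origin. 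Beyond this, correctly invoking Kichenassamy--V\'eron also requires ruling out the possibility that $G_*$ differs from $\mu$ by a nontrivial regular $p$-harmonic part; for $1<p<3$ on $\mathbb{R}^3\setminus\{0\}$ this is part of the classification you are citing, and it is legitimate to treat it as a black box since it is precisely the content of the referenced theorems. With these caveats the proposal is a faithful outline.
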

In particular, the above proposition implies that $\nabla \hat{u}\neq 0$ and $\hat{u}$ is smooth near its pole $x_0$. In other words, if $t$ is sufficiently large, then $t$ is a regular value of $\hat{u}$. However, Sard's theorem might not be applied for small $t$ since $\hat{u}$ does not have enough regularity when $p\neq2$. Then the set of critical values might not have measure zero. To overcome this difficulty, we study the behaviour of $\hat{u}$ by considering the solution to the regularized equation.

\subsection{Regularized equation}
Recall that $p\in (1,2]$ and $\hat{u}=G(x_{0},\cdot)$ is the $p$-Green function on $M$. Let $a<b$ be two positive numbers such that $t$ is regular value of $\hat{u}$ for all $t\geq b$. We choose an open domain $D$ so that
\begin{itemize}\setlength{\itemsep}{1mm}
\item $D\Subset M\setminus \{x_0\}$ where $x_0$ is the pole of $\hat{u}$;
\item $\{a<\hat{u}<b\}\Subset D$;
\item $\partial D$ is smooth.
\end{itemize}
We consider the following perturbed equation on $D$:
\begin{equation}\label{regularized-harmonic}
\begin{cases}
\ \mathrm{div}\left(\phi_{\e}(|\nabla u_{\e}|) \cdot \nabla u_{\e} \right)=0 & \text{in $D$};\\
\ u_{\e} = \hat u & \text{on $\de D$},
\end{cases}
\end{equation}
where $\phi_{\e}(s)=(s^2+\e)^{\frac{p-2}{2}}$ so that $u_{\e}\in C^\infty(D)$ by standard regularity theory of elliptic PDE. In the following proposition, we collect some well-known facts about $u_{\e}$.

\begin{prop}\label{Regularity-approximation-stability}
The function $u_{\e}$ satisfies the following properties:
\begin{enumerate}\setlength{\itemsep}{1mm}
\item[(i)] On the region $\widetilde D \subset D$ where $|\nabla\hat{u}|\neq 0$, $u_\e$ is uniformly bounded in $C^k_{\loc}(\widetilde D)$ for all $k\in \mathbb{N}$ and $\e>0$;
\item[(ii)] There is $\alpha\in(0,1)$ such that for any $\e>0$ and $D'\Subset D$,
\[
\|u_{\e}\|_{C^{1,\a}(D')}+\|u_{\e}\|_{W^{2,2}(D')} \leq C(\hat{u},\alpha,D',D).
\]
\item[(iii)] For any $D'\Subset D$, $u_{\e}\rightarrow\hat{u}$ in $C^{1}(D')$ as $\e\rightarrow0$.
\item[(iv)] There is $\e_{0}$ such that for any $\e\in(0,\e_{0})$ and $t\in(a,b)$,
\[
\int_{\{u_{\e}=t\}\cap\{|\nabla u_{\e}|>0\}}|\nabla u_{\e}|^{p-1} \leq C(\hat{u},b).
\]
\end{enumerate}
\end{prop}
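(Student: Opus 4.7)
The plan is to prove the four claims in the order (ii), (iii), (i), (iv), since each subsequent item builds on the preceding ones.

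For (ii), I would invoke the classical interior regularity theory for the regularized $p$-Laplace equation due to Tolksdorf, DiBenedetto, and Lewis, which yields uniform $C^{1,\alpha}_{\loc}$ estimates depending only on $p$, the domain inclusion $D'\Subset D$, $\|u_\e\|_{L^\infty(D)}$, and $\|\nabla u_\e\|_{L^p(D)}$. The $L^\infty$ bound follows from the weak maximum principle using $\hat u$ as a barrier; the $p$-energy bound follows from the fact that $u_\e$ minimizes $\tfrac{1}{p}\int(|\nabla w|^2+\e)^{p/2}\,dV$ over $w$ with $w=\hat u$ on $\partial D$, of which $\hat u$ is a finite-energy competitor. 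The $W^{2,2}_{\loc}$ bound is then obtained by differentiating the equation in a fixed direction and testing against $\eta^2\partial_i u_\e$, with the coefficients handled using the $C^{1,\alpha}_{\loc}$ estimate already in hand.

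For (iii), the estimates in (ii) combined with Arzel\`a--Ascoli give that every subsequence of $\{u_\e\}$ has a further $C^1_{\loc}(D)$-convergent sub-subsequence with limit $v\in C^{1,\alpha}_{\loc}(D)$. Passing to the limit in the weak formulation and using $\phi_\e(|\nabla u_\e|)\nabla u_\e\to|\nabla v|^{p-2}\nabla v$ a.e.\ identifies $v$ as a weak $p$-harmonic function in $D$. Up-to-the-boundary $C^{0,\alpha}$ estimates together with $u_\e=\hat u$ on $\partial D$ force $v=\hat u$ on $\partial D$, and uniqueness of the Dirichlet problem for the $p$-Laplacian forces $v=\hat u$ throughout $D$; since the limit is unique, the full family converges. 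Statement (i) follows at once: on any $\widetilde D'\Subset\widetilde D$ there exist $\delta>0$ and $\e_0>0$ so that $|\nabla u_\e|\geq\delta$ on $\widetilde D'$ for $\e\in(0,\e_0)$ by (iii), and on such a region $\phi_\e$ is bounded between two positive constants with smooth gradient dependence. The equation is then uniformly elliptic quasilinear, and Schauder bootstrap produces uniform $C^k$ estimates.

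The main obstacle is (iv). Since $u_\e\in C^\infty(D)$ for each fixed $\e$, Sard's theorem supplies a dense set of regular values in $(a,b)$, and the boundary condition $u_\e|_{\partial D}=\hat u|_{\partial D}\notin(a,b)$ guarantees $\{s<u_\e<t\}\Subset D$ for any $s,t\in(a,b)$. Integrating the equation $\div(\phi_\e(|\nabla u_\e|)\nabla u_\e)=0$ over this region and invoking the divergence theorem yields
\[
\int_{\{u_\e=t\}}\phi_\e(|\nabla u_\e|)|\nabla u_\e|\,dA = \int_{\{u_\e=s\}}\phi_\e(|\nabla u_\e|)|\nabla u_\e|\,dA,
\]
so the flux is independent of the regular level in $(a,b)$. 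Evaluating at a level $t$ just below $b$, and using the uniform $C^1$ convergence from (iii) on a neighborhood of the regular hypersurface $\{\hat u=b\}$, the common value is bounded by $\int_{\{\hat u=b\}}|\nabla\hat u|^{p-1}\,dA + o(1)=1+o(1)$. To pass from $\phi_\e(|\nabla u_\e|)|\nabla u_\e|$ to $|\nabla u_\e|^{p-1}$, I would split the level set according to whether $|\nabla u_\e|\geq\sqrt\e$: on the non-degenerate piece the algebraic inequality $(s^2+\e)^{(p-2)/2}s\geq 2^{(p-2)/2}s^{p-1}$ (valid for $p\leq 2$ when $s\geq\sqrt\e$) yields $|\nabla u_\e|^{p-1}\leq 2^{(2-p)/2}\phi_\e|\nabla u_\e|$, which is controlled by the flux; on the complementary degenerate piece the trivial pointwise bound $|\nabla u_\e|^{p-1}<\e^{(p-1)/2}$ combined with a uniform area estimate for the small-gradient portion of the level set (obtained from the $C^{1,\alpha}$ bound of (ii) and the coarea formula) handles the remainder. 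This reconciliation of the degenerate and non-degenerate regimes, and in particular the uniform area control on the small-gradient part of the level set, is where I expect the bulk of the technical effort to lie.
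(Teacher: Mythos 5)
Your treatment of (ii), (iii), and (i) is essentially sound. For (iii) you take a genuinely different route from the paper: you argue by compactness (Arzel\`a--Ascoli via the uniform $C^{1,\alpha}_{\loc}$ bound) plus identification of the subsequential limit as a $p$-harmonic function and uniqueness of the Dirichlet problem, whereas the paper exploits the variational structure directly, proving $I_\e(u_\e)\to I(\hat u)$ and then invoking a standard argument (citing Lewis) to get $\|u_\e-\hat u\|_{W^{1,p}(D)}\to 0$, which is upgraded to $C^1_{\loc}$ by the $C^{1,\alpha}_{\loc}$ estimate. Both paths work; the paper's avoids the need for up-to-the-boundary regularity that your identification of the limiting boundary values requires.

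The substantive issue is in (iv). Your flux argument matches the paper's: integrating the regularized equation over $\{t<u_\e<b\}$ gives constancy of $\int_{\{u_\e=\cdot\}}(|\nabla u_\e|^2+\e)^{(p-2)/2}|\nabla u_\e|$ across regular levels, and the $C^1$ convergence of $u_\e$ to $\hat u$ in a neighborhood of the regular level $\{\hat u=b\}$ bounds the flux there. Your split into $\{|\nabla u_\e|\geq\sqrt\e\}$ and $\{|\nabla u_\e|<\sqrt\e\}$ is a sensible move, and the non-degenerate piece is handled correctly. But on the degenerate piece you claim a uniform area bound for $\{u_\e=t\}\cap\{|\nabla u_\e|<\sqrt\e\}$ ``obtained from the $C^{1,\alpha}$ bound of (ii) and the coarea formula.'' Neither of those gives what you need: the $C^{1,\alpha}$ bound controls $|\nabla u_\e|$ from above, not from below, and the coarea formula only gives $\int_a^b\mathcal{H}^2(\{u_\e=t\})\,dt\leq\int_{D}|\nabla u_\e|\leq C$, i.e.\ a bound on the \emph{average} area over $t$, not a bound at every fixed $t$. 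Uniform-in-$t$ (and uniform-in-$\e$) area control for level sets of solutions to degenerate quasilinear equations is precisely the kind of delicate estimate that requires the nodal-set machinery the paper invokes elsewhere (Hardt--Simon, Lin, Cheeger--Naber--Valtorta), and even there the constants depend on $\e$. So this step is a genuine gap, not just a technicality you are deferring. You also leave out the paper's Case 2 (critical values of $u_\e$), where the divergence-theorem argument does not directly apply and the paper instead uses continuity of $t\mapsto\int_{\{u_\e=t\}}|\nabla u_\e|^{p-1}$ together with Sard's theorem to approximate by regular values.

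\end{document}
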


\begin{proof}
By the standard elliptic PDE theory, we obtain (i). For (ii), the $C^{1,\a}_{\loc}$ and $W^{2,2}_{\loc}$ estimates follow from \cite[Theorem 1]{Tolksdorf1984} and \cite[Lemma 2.1]{ManfrediWeitsman1988} thanks to the range of $p$. To prove (iii), for $\e\in[0,1)$ and $v-\hat{u}\in W_{0}^{1,p}(D)$ (here $W_{0}^{1,p}(D)$ be the completion of $C_{0}^{\infty}(D)$ in $W^{1,p}(D)$), define the functionals
\begin{equation}
I_{\e}(v) := \int_{D}(|\nabla v|^{2}+\e)^{\frac{p}{2}}, \quad I(v)=I_{0}(v).
\end{equation}
It is well-known that $u_{\e}$ and $\hat{u}$ are the unique minimizers of $I_{\e}$ and $I$ respectively, and
\begin{equation}
\lim_{\e\rightarrow0}I_{\e}(u_{\e}) = I(\hat{u}).
\end{equation}
The standard argument (see e.g \cite[(1.6)]{Lewis77}) shows that
\begin{equation}
\lim_{\e\rightarrow0}\|u_{\e}-\hat{u}\|_{W^{1,p}(D)} = 0.
\end{equation}
Combining this with $C^{1,\a}_{\loc}$ estimate, we obtain (iii).

We now prove (iv). Recall that $b$ is a regular value of $\hat{u}$. Let $U\Subset D \cap\{|\nabla \hat u|>0\}$ be a neighborhood of $\{\hat u=b\}$. Thanks to (i) and (iii), $u_{\e}\rightarrow\hat{u}$ in $C^{\infty}(U)$ and so
\begin{equation}
\lim_{\e\rightarrow0}\int_{\{u_{\e}=b\}}(|\nabla u_{\e}|+1)^{p-1}
= \int_{\{\hat{u}=b\}}(|\nabla\hat{u}|+1)^{p-1}.
\end{equation}
Then there is $\e_{0}$ such that for any $\e\in(0,\e_{0})$,
\begin{equation}
\int_{\{u_{\e}=b\}}(|\nabla u_{\e}|+1)^{p-1} \leq C(\hat{u},b).
\end{equation}
We split the argument into two cases.

\medskip
\noindent
{\bf Case 1.} $t$ is a regular value of $u_{\e}$.
\medskip

Integrating \eqref{regularized-harmonic} on $\{t<u_{\e}<b\}$ and using the divergence theorem,
\begin{equation}\label{gradient p-1 eqn 1}
\int_{\{u_{\e}=t\}}(|\nabla u_{\e}|^2+\e)^{\frac{p-2}{2}}|\nabla u_{\e}|
= \int_{\{u_{\e}=b\}}(|\nabla u_{\e}|^2+\e)^{\frac{p-2}{2}}|\nabla u_{\e}| \leq C(\hat{u},b),
\end{equation}
and then
\begin{equation}
\int_{\{u_{\e}=t\}\cap\{|\nabla u_{\e}|>0\}}|\nabla u_{\e}|^{p-1} = \int_{\{u_{\e}=t\}}|\nabla u_{\e}|^{p-1} \leq C(\hat{u},b).
\end{equation}

\medskip
\noindent
{\bf Case 2.} $t$ is a critical value of $u_{\e}$.
\medskip

By a similar argument of \cite[Lemma 9]{ChodoshLi2021}, the map
\begin{equation}
t\mapsto\int_{\{u_{\e}=t\}}|\nabla u_{\e}|^{p-1}
\end{equation}
is continuous. By $u_{\e}\in C^{\infty}(\Omega)$ and Sard's theorem, there is a sequence of regular values $t_{i}$ such that $t_{i}\rightarrow t$. Then Step 1 shows
\begin{equation}
\int_{\{u_{\e}=t\}\cap\{|\nabla u_{\e}|>0\}}|\nabla u_{\e}|^{p-1}
=  \lim_{i\rightarrow\infty}\int_{\{u_{\e}=t_{i}\}}|\nabla u_{\e}|^{p-1}
\leq  C(\hat{u},b).
\end{equation}
This complete the proof of (iv).

\end{proof}

\begin{lma}\label{one-comp}
There is $\e_{0}>0$ such that for any $\e\in(0,\e_{0})$ and regular value $t$ of $u_{\e}$  in $(a,b)$, the level set
\begin{equation}
\Sigma_{\e,t} := \{u_{\e}=t\}
\end{equation}
has only one connected component.
\end{lma}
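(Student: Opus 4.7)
The plan is to combine the strong maximum principle for the uniformly elliptic equation satisfied by $u_\e$ (for each fixed $\e>0$) with the topological hypothesis $H_2(M,\mathbb{Z})=0$. We may and shall choose the domain $D$ so that $M\setminus D$ has a unique bounded connected component $E_+$, necessarily containing the pole $x_0$: concretely, take $D=\{\hat u<b_1\}\cap B_R(x_0)$ for a regular value $b_1>b$ of $\hat u$ and $R>0$ large and generic. The level set $\{\hat u=b_1\}$ is then a single smooth connected hypersurface, since every $t\ge b$ is a regular value of $\hat u$ and so the smooth family $\{\hat u=t\}_{t\ge b}$ forms a fibration whose fibres near the pole are close to round spheres by Proposition~\ref{asymptotic-Green-pole}; in particular $E_+=\{\hat u\ge b_1\}$ is the unique bounded component of $M\setminus D$.

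Fix $\e>0$ small and a regular value $t\in(a,b)$ of $u_\e$. Since $u_\e=\hat u$ on $\partial D$ with $\hat u\le a<t$ or $\hat u\ge b>t$ there, $\Sigma_{\e,t}$ is a compact smooth hypersurface in the interior of $D$; each connected component $\Sigma_i$ is orientable (via $\nabla u_\e$), and by $H_2(M,\mathbb{Z})=0$ it is null-homologous, hence bounds a compact subdomain $\Omega_i\subset M$. If $\Omega_i\Subset D$, the strong maximum principle applied to the linear uniformly elliptic equation satisfied by $u_\e$ on $\Omega_i$ with boundary value $t$ forces $u_\e\equiv t$ on $\Omega_i$, contradicting the regularity of $t$. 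Hence $\Omega_i\cap(M\setminus D)\neq\emptyset$; because $\partial\Omega_i=\Sigma_i\subset D$ is disjoint from $M\setminus D$, each connected component of $M\setminus D$ lies entirely inside or outside $\Omega_i$, and the boundedness of $\Omega_i$ then forces $\Omega_i\supset E_+$.

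Now suppose $\Sigma_{\e,t}$ has two distinct components $\Sigma_i,\Sigma_j$, so both $\Omega_i,\Omega_j\supset E_+$. Since each $\Sigma_k$ separates $M$ into $\Omega_k$ and $M\setminus\ol{\Omega_k}$, and $\Sigma_j$ is connected and disjoint from $\Sigma_i$, the surface $\Sigma_j$ lies on one side of $\Sigma_i$; a short case analysis using the shared containment of $E_+$ yields (after swapping $i,j$ if necessary) $\ol{\Omega_i}\subset\Omega_j$. The annular region $\Omega_j\setminus\ol{\Omega_i}$ is then a nonempty bounded open set whose boundary lies in $\Sigma_i\cup\Sigma_j$ and which avoids $E_+$; by the same clopen argument it is contained in $D$. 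A further application of the strong maximum principle gives $u_\e\equiv t$ on $\Omega_j\setminus\ol{\Omega_i}$, contradicting $|\nabla u_\e|>0$ on $\Sigma_i\cup\Sigma_j$. Non-emptiness of $\Sigma_{\e,t}$ follows from the intermediate value theorem applied between $\partial D\cap\{\hat u\ge b\}$ and $\partial D\cap\{\hat u\le a\}$, so exactly one component remains. The main subtlety will be the Jordan--Brouwer-type topological separation yielding the nesting $\ol{\Omega_i}\subset\Omega_j$, where both $H_2(M,\mathbb{Z})=0$ and the single-bounded-component choice of $D$ enter decisively.
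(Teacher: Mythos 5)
Your proposal is correct, and it gives a valid proof, but it reorganizes the argument around a specific choice of domain $D$, whereas the paper runs the argument for an arbitrary $D$ satisfying the bulleted conditions of Section~2. The paper's proof (following \cite{AMO2021}) takes two disjoint components $\Sigma',\Sigma''$, obtains bounding domains $\Omega',\Omega''$ from $H_2(M,\mathbb{Z})=0$, and splits into the cases $\Omega'\cap\Omega''\neq\emptyset$ (nested) and $\Omega'\cap\Omega''=\emptyset$; in each case it locates one region $\Omega$ among $\Omega'$, $\Omega''\setminus\ol{\Omega'}$, $\Omega''$ that does \emph{not} contain the pole $x_0$, applies the maximum principle to the $p$-harmonic $\hat u$ on $\Omega$ to deduce $\Omega\subset\{\hat a<\hat u<\hat b\}\Subset D$, and then applies the maximum principle to $u_\e$ on $\Omega$ to get the contradiction $u_\e\equiv t$. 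Your version instead (i) fixes $D=\{\hat u<b_1\}\cap B_R(x_0)$ so that $M\setminus D$ has a unique bounded component $E_+\ni x_0$, (ii) shows each $\Omega_i\supset E_+$, and (iii) uses nesting of $\Omega_i,\Omega_j$ to produce an annulus in $D$ on which $u_\e\equiv t$. Both proofs share the same engine ($H_2=0$ to bound, maximum principle to force constancy, the pole to orient the case analysis), but your version trades the paper's pointwise case split on $x_0$ for a global normalization of $D$. The payoff of your formulation is that the containment $\Omega_i\supset E_+$ is stated once and for all, so the nesting step is cleaner; the cost is that it requires verifying the asserted structure of $M\setminus D$, namely that $\{\hat u\ge b_1\}$ is connected (which follows from the maximum principle applied to $\hat u$ on any component avoiding $x_0$, using that $b_1$ is a regular value) and that $M\setminus B_R(x_0)$ has no bounded component (which follows since $d(\cdot,x_0)$ has no interior local maximum). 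You assert these but do not prove them; they are true and not difficult, but you should include them, and you should also note that for $R$ large the sets $\{\hat u\ge b_1\}$ and $M\setminus B_R(x_0)$ are disjoint, which is what makes $E_+$ \emph{the} unique bounded component rather than merely a bounded component.

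One further caveat applies equally to both proofs and is worth flagging: the step ``$\Omega_i\Subset D$ and $u_\e=t$ on $\partial\Omega_i$ implies $u_\e\equiv t$'' requires $\ol{\Omega_i}\subset D$, which in turn requires the level set $\{u_\e=t\}$ to stay in a fixed compact subset of $D$ uniformly in $\e$. This follows from $u_\e=\hat u$ on $\partial D$ together with the fact that $\hat u\le a$ or $\hat u\ge b$ on $\partial D$ and the locally uniform $C^1$-convergence $u_\e\to\hat u$ on $D'\Subset D$, but it deserves a sentence; your write-up, like the paper's, leaves this implicit.
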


\begin{proof}
We follow the argument of \cite[Theorem 1.1]{AMO2021}. Suppose that $\Sigma'$ and $\Sigma''$ are two disjoint connected components. By the triviality of $H_2(M,\mathbb{Z})$, each closed $2$-dimensional surface is the boundary of some $3$-dimensional bounded open domain. This shows $\de\Omega'=\Sigma'$ and $\de\Omega''=\Sigma''$ for some bounded open domains $\Omega'$ and $\Omega''$. We split the argument into two cases.

\medskip
\noindent
{\bf Case 1.} $\Omega'\cap\Omega''\neq\emptyset$.
\medskip

By $\Sigma'\cap\Sigma''=\emptyset$, we have $\Omega'\Subset\Omega''$ or $\Omega''\Subset\Omega'$. Without loss of generality, we may assume that $\Omega'\Subset\Omega''$. Set $\Omega'''=\Omega''\setminus\Omega'$. Then we either have $x_{0}\notin\Omega'$ or $x_{0}\notin\Omega'''$, where $x_{0}$ is the pole of the Green function. Recall that
\begin{equation}
\{a<\hat{u}<b\} \Subset D.
\end{equation}
Then there exist $\hat{a}$, $\hat{b}$ such that
\begin{equation}
0<\hat{a}<a<b<\hat{b}, \ \
\{\hat{a}<\hat{u}<\hat{b}\} \Subset D.
\end{equation}
If $x_{0}\notin\Omega'$. Since $u_{\e}=t$ on $\de\Omega'=\Sigma'$, then by Proposition \ref{Regularity-approximation-stability} (iii), there is $\e_{0}>0$ such that for any $\e\in(0,\e_{0})$,
\begin{equation}
\hat{a} < \hat{u} < \hat{b} \ \ \text{on $\Sigma'=\de\Omega'$}.
\end{equation}
By the maximum principle,
\begin{equation}
\hat{a} < \hat{u} < \hat{b} \ \ \text{in $\Omega'$}
\end{equation}
and so
\begin{equation}
\Omega' \subset \{\hat{a}<\hat{u}<\hat{b}\} \Subset D.
\end{equation}
This implies that $\Omega'$ is a subset of the domain of $u_{\e}$. Since $u_{\e}=t$ on $\Sigma'=\de\Omega'$, then the maximum principle shows that $u_{\e}$ is constant in $\Omega'$, which is impossible. If $x_{0}\notin\Omega'''$, we may derive a contradiction by the same argument.

\medskip
\noindent
{\bf Case 2.} $\Omega'\cap\Omega''=\emptyset$.
\medskip

By the similar argument of Case 1, this case will not happen either.

\end{proof}

\subsection{Properties of $F_{\e}$ and $F$}
For any $t>0$, recall that
\begin{equation}
F(t) = \int_{\{\hat{u}=t\}\cap\{|\nabla\hat{u}|>0\}}|\nabla\hat{u}|^2 \; dA.
\end{equation}
To study the behaviour of $F$. We define
\begin{equation}
F_\e(t) := \int_{\{u_{\e}=t\}\cap\{|\nabla u_{\e}|>0\}}|\nabla u_\e|^2 \; dA.
\end{equation}

\begin{lma}\label{local Lipschitz}
For each $\e$, the function $F_{\e}$ is locally Lipschitz in $(a,b)$.
\end{lma}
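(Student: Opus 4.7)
The strategy is to apply the divergence theorem to the vector field $X_\e := |\nabla u_\e|\,\nabla u_\e$, which is Lipschitz on $D$ (smooth on $\{|\nabla u_\e|>0\}$ and vanishing at critical points) and satisfies $\langle X_\e,\nabla u_\e/|\nabla u_\e|\rangle = |\nabla u_\e|^2$ on regular level sets. As a preliminary, note that since $u_\e\to\hat u$ in $C^1_{\loc}(D)$ by Proposition~\ref{Regularity-approximation-stability}(iii) and $\{a\leq\hat u\leq b\}\Subset D$, the set $\{a\leq u_\e\leq b\}$ is contained in some compact $K\Subset D$ for every sufficiently small $\e$; and since \eqref{regularized-harmonic} is uniformly elliptic for each fixed $\e>0$, standard theory gives $u_\e\in C^\infty(D)$ with $\|u_\e\|_{C^2(K)}\leq M$, where $M$ is allowed to depend on $\e$.

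For any two regular values $s<t$ of $u_\e$ in $(a,b)$, the divergence theorem applied to $X_\e$ on the smooth domain $\{s<u_\e<t\}$ yields
\[
F_\e(t)-F_\e(s)\;=\;\int_{\{s<u_\e<t\}}\div(X_\e)\,dV.
\]
A direct computation at regular points gives
\[
\div(X_\e)\;=\;|\nabla u_\e|\,\Delta u_\e \;+\; \frac{\nabla^2 u_\e(\nabla u_\e,\nabla u_\e)}{|\nabla u_\e|},
\]
which extends continuously by zero to the critical set and satisfies the pointwise bound $|\div(X_\e)|\leq 2M\,|\nabla u_\e|$ on $K$. The coarea formula then gives
\[
|F_\e(t)-F_\e(s)|\;\leq\;2M\int_{s}^{t}\mathcal{H}^{2}\bigl(\{u_\e=\tau\}\cap K\bigr)\,d\tau.
\]

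The remaining ingredient is the local boundedness of $\tau\mapsto\mathcal{H}^{2}(\{u_\e=\tau\}\cap K)$ on $(a,b)$. By Sard's theorem (applicable since $u_\e\in C^\infty$) and the implicit function theorem, this area function is continuous---hence locally bounded---on the open dense set of regular values of $u_\e$. The continuous extension across critical values, as well as continuity of $F_\e$ itself there, can be argued along the lines of \cite[Lemma~9]{ChodoshLi2021} (whose proof simplifies considerably here since $u_\e$ is smooth rather than merely $C^{1,\alpha}$). Combining the resulting $L^\infty_{\loc}$ bound on the area with the previous display yields a Lipschitz estimate on any compact subinterval of $(a,b)$ for pairs of regular values $s<t$, and the full local Lipschitz claim then follows by density of regular values together with the continuity of $F_\e$. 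The main technical obstacle is precisely the local $L^\infty$ bound on the level-set area near critical values of $u_\e$; this is the point at which the $C^\infty$ smoothness of $u_\e$ (rather than only the $C^{1,\alpha}$ regularity of $\hat u$) and the uniform ellipticity of \eqref{regularized-harmonic} for fixed $\e>0$ are genuinely used.
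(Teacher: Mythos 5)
Your overall strategy — apply the divergence theorem to the Lipschitz vector field $X_\e = |\nabla u_\e|\,\nabla u_\e$, bound $\div(X_\e)$ by $C_\e\,|\nabla u_\e|$ using the fixed-$\e$ $C^2$ bound, and invoke the coarea formula to reduce the Lipschitz estimate to a uniform-in-$\tau$ bound on $\mathcal{H}^2(\{u_\e = \tau\}\cap K)$ — is essentially the same as the paper's (the paper works with $v_\delta = \sqrt{|\nabla u_\e|^2+\delta}$ and takes $\delta\to 0$, a slightly more roundabout way of arriving at the same divergence computation on the critical set; your use of $X_\e$ directly is cleaner). However, you have correctly diagnosed but not closed the crucial gap: the uniform $L^\infty$ bound on the level-set areas across critical values of $u_\e$. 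You observe that regular-value areas are continuous and appeal to the ``$C^\infty$ smoothness and uniform ellipticity'' of the perturbed equation for the rest, but local boundedness on a dense open set of regular values does \emph{not} by itself prevent the area $\mathcal{H}^2(\{u_\e=\tau\})$ from blowing up as $\tau$ approaches a critical value, and smoothness plus uniform ellipticity do not immediately yield the bound either. This is genuinely the hardest ingredient of the lemma.

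The paper supplies this missing bound by viewing \eqref{regularized-harmonic} as a \emph{linear} uniformly elliptic equation with smooth coefficients $a(x) := \phi_\e(|\nabla u_\e(x)|)$ once $u_\e$ is fixed, and then invoking nodal-set estimates for solutions of such equations (Hardt--Simon, Lin, Cheeger--Naber--Valtorta), which give $\mathcal{H}^{n-1}(\Sigma_{\e,t}) \le C_\e$ uniformly in $t$ on a bounded range, together with $\dim_H(\Sigma_{\e,t}\cap\{|\nabla u_\e|=0\})\le n-2$. These are nontrivial theorems, not an elementary consequence of $C^\infty$ regularity. Your appeal to \cite[Lemma~9]{ChodoshLi2021} does not substitute for them: that lemma concerns continuity of the quantity analogous to $F_\e$, not a level-set area bound. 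To complete your argument you would need to add the citation (and justification of its applicability to the linearized form of \eqref{regularized-harmonic}) of the nodal-set results, after which your $L^\infty$ bound on the areas and the final Lipschitz estimate go through as you outlined.
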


\begin{proof}

Let $t\in(a,b)$. For each $\e$, $u_{\e}$ is a solution of linear elliptic PDE with smooth coefficient. From \cite{HardtSimon1989, Lin1991, CNV2015}, it follows that
\begin{equation}\label{Nodal set results}
\mathcal{H}^{n-1}(\Sigma_{\e,t}) \leq C_{\e}, \ \
\mathrm{dim}_{H}(\Sigma_{\e,t}\cap\{|\nabla u_{\e}|=0\}) \leq n-2.
\end{equation}
Here $C_{\e}$ is a constant depending on $\e$.
By the same argument of \cite[Lemma 9]{ChodoshLi2021}, $F_{\e}$ is continuous. Or one may derive the continuity of $F_{\e}$ by the continuity of $F_{\e,\delta}$ and \eqref{difference F e and F e delta}. Next, we follow the argument of \cite[Lemma 10]{ChodoshLi2021} to prove that $F_{\e}$ is locally Lipschitz. For any regular values of $u_{\e}$ in $(a,b)$, $t_{1}<t_{2}$, we define
\begin{equation}
\Omega_{\e,t_{1},t_{2}}
:= \{x\in M:t_{1}<u_{\e}(x)<t_{2}\}
\end{equation}
and compute
\begin{equation}
\begin{split}
F_{\e}(t_{1})-F_{\e}(t_{2}) = {} & \lim_{\delta\rightarrow0}\int_{\Omega_{\e,t_{1},t_{2}}}
\left\langle(|\nabla u_{\e}|^{2}+\delta)^{\frac{1}{2}}\nabla u_{\e},\frac{\nabla u_{\e}}{|\nabla u_{\e}|}\right\rangle \\
= {} & \lim_{\delta\rightarrow0}\int_{\Omega_{\e,t_1,t_2}}\div\left((|\nabla u_{\e}|^{2}+\delta)^{\frac{1}{2}}\nabla u_{\e}\right).
\end{split}
\end{equation}
Using \eqref{regularized-harmonic}, we compute
\begin{equation*}
\begin{split}
& \div\left((|\nabla u_{\e}|^{2}+\delta)^{\frac{1}{2}}\nabla u_{\e}\right) \\[1mm]
= {} & \div\left((|\nabla u_{\e}|^{2}+\delta)^{\frac{1}{2}}\phi_{\e}^{-1}\phi_{\e}\nabla u_{\e}\right) \\[1.5mm]
= {} & \left\langle\phi_{\e}\nabla u_{\e},\nabla\left((|\nabla u_{\e}|^{2}+\delta)^{\frac{1}{2}}\phi_{\e}^{-1}\right)\right\rangle \\
= {} & \left(|\nabla u_{\e}|(|\nabla u_{\e}|^{2}+\delta)^{-\frac{1}{2}}
-(p-2)\frac{|\nabla u_{\e}|(|\nabla u_{\e}|^{2}+\delta)^{\frac{1}{2}}}{|\nabla u_{\e}|^{2}+\e}\right)\langle\nabla u_{\e},\nabla|\nabla u_{\e}|\rangle.
\end{split}
\end{equation*}
By co-area formula and \eqref{Nodal set results},
\begin{equation*}
|F_{\e}(t_{1})-F_{\e}(t_{2})| \leq (3-p)\int_{\Omega_{\e,t_{1},t_{2}}}|\nabla u_{\e}||\nabla^{2}u_{\e}|
\leq C_{\e}\int_{t_{1}}^{t_{2}}\mathcal{H}^{n-1}(\Sigma_{\e,t})dt \leq C_{\e}|t_{1}-t_{2}|.
\end{equation*}
Combining this with Sard's theorem and the continuity of $F_{\e}$, we prove that $F_{\e}$ is local Lipschitz.
\end{proof}

\begin{lma}\label{gradient p-1 1}
For any regular value $t$ of $\hat{u}$, we have
\begin{equation}
\int_{\Sigma_{t}}|\nabla\hat{u}|^{p-1} = 1.
\end{equation}
\end{lma}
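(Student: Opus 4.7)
The plan is to test the weak formulation \eqref{Greendef} of the $p$-Green function against a cutoff built from $\hat{u}$ itself, then apply the co-area formula together with continuity of the boundary flux near a regular level.

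Fix $\delta>0$ small and choose a smooth non-decreasing $\eta_\delta:\mathbb{R}\to[0,1]$ with $\eta_\delta\equiv 0$ on $(-\infty,t-\delta]$ and $\eta_\delta\equiv 1$ on $[t+\delta,+\infty)$. Define $\varphi_\delta:=\eta_\delta\circ\hat{u}$ on $M\setminus\{x_0\}$ and extend by $\varphi_\delta(x_0):=1$. Since $\hat{u}$ is proper with $\hat{u}\to 0$ at infinity and $\hat{u}(x)\to+\infty$ as $x\to x_0$, the superlevel set $\{\hat{u}\geq t-\delta\}$ is compact and $\eta_\delta'(\hat{u})$ vanishes in a neighborhood of $x_0$. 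Thus $\varphi_\delta$ is continuous on $M$, compactly supported, lies in $W^{1,p}(M)$, and satisfies $\varphi_\delta(x_0)=1$. Proposition~\ref{asymptotic-Green-pole} gives $|\nabla\hat{u}|^{p-1}\in L^{p/(p-1)}_{\mathrm{loc}}(M)$, which together with $\hat{u}\in C^{1,\alpha}_{\mathrm{loc}}(M\setminus\{x_0\})$ allows a standard density argument to extend the class of admissible test functions in \eqref{Greendef} from $C_0^\infty(M)$ to compactly supported continuous $W^{1,p}$ functions.

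Substituting $\varphi_\delta$ into \eqref{Greendef}, using $\nabla\varphi_\delta=\eta_\delta'(\hat{u})\nabla\hat{u}$, and applying the co-area formula to the $C^{1,\alpha}_{\mathrm{loc}}$ function $\hat{u}$ yields
\begin{equation*}
1=\varphi_\delta(x_0)=\int_M \eta_\delta'(\hat{u})\,|\nabla\hat{u}|^p\,dV=\int_{t-\delta}^{t+\delta}\eta_\delta'(s)\,h(s)\,ds,
\end{equation*}
where $h(s):=\int_{\Sigma_s\cap\{|\nabla\hat{u}|>0\}}|\nabla\hat{u}|^{p-1}\,dA$.

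Finally, since $t$ is a regular value, $|\nabla\hat{u}|\geq c>0$ on a neighborhood $U$ of $\Sigma_t$, so the $p$-Laplace equation is uniformly elliptic on $U$, $\hat{u}$ is smooth there, and the implicit function theorem produces a smooth foliation of $U$ by the level sets $\Sigma_s$ for $s$ near $t$. Hence $h$ is continuous at $t$. Combined with $\int_{t-\delta}^{t+\delta}\eta_\delta'(s)\,ds=1$, letting $\delta\to 0$ forces $h(t)=1$, which is the claim. The only genuinely delicate point is the density argument legitimizing $\varphi_\delta$ as a test function in \eqref{Greendef}; this is routine once one invokes Proposition~\ref{asymptotic-Green-pole} to control the singularity of $\hat{u}$ at the pole.
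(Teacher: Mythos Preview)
Your argument is correct and takes a genuinely different route from the paper's. The paper first proves that the flux $\int_{\Sigma_t}|\nabla\hat u|^{p-1}$ is constant across regular levels by integrating the \emph{regularized} equation over $\{t<u_\e<b\}$, applying the divergence theorem, and letting $\e\to 0$ via Proposition~\ref{Regularity-approximation-stability}; it then evaluates the constant by sending $b\to\infty$ and invoking the pole asymptotics of Proposition~\ref{asymptotic-Green-pole}. You instead exploit the defining distributional equation \eqref{Greendef} directly, testing against a cutoff of $\hat u$ and reading off the flux via co-area. Your approach is shorter and avoids the regularization machinery; the paper's approach fits naturally into its framework and, as a by-product, also yields the constancy of the flux across all regular levels without reference to the weak formulation.

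One small correction: the claim that Proposition~\ref{asymptotic-Green-pole} gives $|\nabla\hat u|^{p-1}\in L^{p/(p-1)}_{\mathrm{loc}}(M)$ is false near the pole. Since $|\nabla\hat u|\sim r^{-2/(p-1)}$ one has $|\nabla\hat u|^{p-1}\sim r^{-2}$, which lies in $L^q_{\mathrm{loc}}(\mathbb R^3)$ only for $q<3/2$, whereas $p/(p-1)>3/2$ for every $p\in(1,3)$. This does not damage your proof: because $\varphi_\delta\equiv 1$ in a neighborhood of $x_0$ and $\nabla\varphi_\delta$ is supported in the compact set $\{t-\delta\le\hat u\le t+\delta\}$ away from the pole, you may choose the smooth approximants $\psi_k$ to satisfy $\psi_k\equiv 1$ near $x_0$ (hence $\psi_k(x_0)=1$) with $\nabla\psi_k$ supported in a fixed compact set on which $|\nabla\hat u|^{p-1}$ is bounded by the $C^{1,\alpha}_{\mathrm{loc}}$ regularity of $\hat u$ on $M\setminus\{x_0\}$. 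The density argument then goes through with this localized bound in place of the (incorrect) global integrability claim.
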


\begin{proof}
We first show that for any regular value $t\in(a,b)$,
\begin{equation}\label{gradient p-1 constant}
\int_{\Sigma_{t}}|\nabla\hat{u}|^{p-1} = \int_{\Sigma_{b}}|\nabla\hat{u}|^{p-1}.
\end{equation}
By Proposition \ref{Regularity-approximation-stability}, when $\e$ is sufficiently small, $t$ is also regular values of $u_{\e}$. By \eqref{gradient p-1 eqn 1},
\begin{equation}\label{gradient p-1 eqn}
\int_{\{u_{\e}=t\}}(|\nabla u_{\e}|^2+\e)^{\frac{p-2}{2}}|\nabla u_{\e}|
= \int_{\{u_{\e}=b\}}(|\nabla u_{\e}|^2+\e)^{\frac{p-2}{2}}|\nabla u_{\e}|,
\end{equation}
By Proposition \ref{Regularity-approximation-stability} again, $u_{\e}\to\hat{u}$ in $C^{\infty}(U)$ for some neighborhood $U$ of $\Sigma_{t}\cup\Sigma_{b}$. Letting $\e\to0$ in \eqref{gradient p-1 eqn}, we obtain \eqref{gradient p-1 constant}. Since $a$ and $b$ are arbitrary, then Proposition \ref{asymptotic-Green-pole} shows
\begin{equation}
\int_{\Sigma_{t}}|\nabla\hat{u}|^{p-1} = \lim_{b\to+\infty}\int_{\Sigma_{b}}|\nabla\hat{u}|^{p-1}=1
\end{equation}
for any regular value $t>0$.
\end{proof}

{
\begin{lma}\label{integral approximation}
For any $t>0$ and $0<\delta'<\delta<\delta''$, we have
\begin{equation}\label{integral inequality}
\int_{\{\hat{u}=t\}\cap\{|\nabla\hat{u}|>\delta''\}}|\nabla\hat{u}|^{2}+o(1)
\leq \int_{\{u_{\e}=t\}\cap\{|\nabla u_{\e}|>\delta\}}|\nabla u_{\e}|^{2}
\leq \int_{\{\hat{u}=t\}\cap\{|\nabla\hat{u}|>\delta'\}}|\nabla\hat{u}|^{2}+o(1).
\end{equation}
where $o(1)$ denotes a term satisfying $\lim_{\e\to0}o(1)=0$. In particular, for each $t$, there is a sequence $\delta_i\to0$ such that
\begin{equation}\label{integral limit}
\lim_{\e\to0}\int_{\{u_{\e}=t\}\cap\{|\nabla u_{\e}|>\delta_i\}}|\nabla u_{\e}|^{2}
= \int_{\{\hat{u}=t\}\cap\{|\nabla\hat{u}|>\delta_i\}}|\nabla\hat{u}|^{2}.
\end{equation}
\end{lma}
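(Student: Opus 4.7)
The plan is to exploit the smooth convergence $u_\e\to\hat u$ on the non-degenerate locus $\{|\nabla\hat u|>0\}$ (from Proposition \ref{Regularity-approximation-stability}(i) and (iii)) together with a sandwich placing $\{|\nabla u_\e|>\delta\}$ between $\{|\nabla\hat u|>\delta''\}$ and $\{|\nabla\hat u|>\delta'\}$ for $\e$ small.

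First I would observe that $\Sigma_t$ is compact in $M\setminus\{x_0\}$, since by Proposition \ref{asymptotic-Green-pole} one has $\hat u\to\infty$ at $x_0$ and, by assumption, $\hat u\to 0$ at infinity. Fix an open neighborhood $\Omega$ of $\Sigma_t$ with $\overline\Omega\Subset D\setminus\{x_0\}$. Proposition \ref{Regularity-approximation-stability}(iii) gives $u_\e\to\hat u$ in $C^1(\overline\Omega)$, so
\[
\big\|\,|\nabla u_\e|-|\nabla\hat u|\,\big\|_{C^0(\overline\Omega)}<\tfrac12\min(\delta-\delta',\,\delta''-\delta)
\]
once $\e$ is small enough, and this yields the set inclusions
\[
\{|\nabla\hat u|\geq\delta''\}\cap\overline\Omega\;\subseteq\;\{|\nabla u_\e|>\delta\}\cap\overline\Omega\;\subseteq\;\{|\nabla\hat u|>\delta'\}\cap\overline\Omega.
\]

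Next, for any compact $K\subset\{|\nabla\hat u|>0\}\cap\overline\Omega$, Proposition \ref{Regularity-approximation-stability}(i) together with the $C^1$ convergence gives $u_\e\to\hat u$ in $C^k(K)$ for every $k$. Since $\hat u$ has non-vanishing gradient on a neighborhood of $K$, the level set $\{u_\e=t\}\cap K$ is a smooth hypersurface for $\e$ small and converges smoothly to $\{\hat u=t\}\cap K$ via the implicit function theorem, so
\[
\lim_{\e\to 0}\int_{\{u_\e=t\}\cap K}|\nabla u_\e|^{2}\,dA=\int_{\{\hat u=t\}\cap K}|\nabla\hat u|^{2}\,dA.
\]
Applying this with $K$ a compact neighborhood of $\Sigma_t\cap\{|\nabla\hat u|\geq\delta''\}$ inside $\{|\nabla\hat u|>\delta''/2\}$ and combining with the left-hand inclusion above produces the lower bound in \eqref{integral inequality}; using instead $K$ a compact neighborhood of $\Sigma_t\cap\{|\nabla\hat u|\geq\delta'\}$ inside $\{|\nabla\hat u|>\delta'/2\}$ and combining with the right-hand inclusion produces the upper bound.

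For \eqref{integral limit}, set $g(\delta):=\int_{\Sigma_t\cap\{|\nabla\hat u|>\delta\}}|\nabla\hat u|^{2}\,dA$, which is monotone non-increasing on $(0,\infty)$ and therefore has at most countably many discontinuities. Picking a sequence $\delta_i\to 0$ of continuity points of $g$ and letting $\delta'\uparrow\delta_i$, $\delta''\downarrow\delta_i$ in \eqref{integral inequality} pinches $\lim_{\e\to 0}\int_{\{u_\e=t\}\cap\{|\nabla u_\e|>\delta_i\}}|\nabla u_\e|^{2}$ between $g(\delta_i^+)$ and $g(\delta_i^-)$, both equal to $g(\delta_i)$, which is \eqref{integral limit}. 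The hard part will be establishing the smooth convergence of level sets on the non-degenerate locus; once that is in place, the sandwich argument is routine.
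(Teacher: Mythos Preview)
Your overall strategy matches the paper's: exploit the smooth convergence $u_\e\to\hat u$ on the nondegenerate locus together with a gradient-threshold sandwich, and for \eqref{integral limit} pick $\delta_i$ at continuity points of the monotone function $g(\delta)=\int_{\Sigma_t\cap\{|\nabla\hat u|>\delta\}}|\nabla\hat u|^2$ (the paper phrases this via Sard's theorem for $|\nabla\hat u|$ on the smooth surface $\Sigma_t\cap\{|\nabla\hat u|>0\}$, which amounts to the same thing).

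However, the way you implement the sandwich via an ambient compact set $K$ does not quite close. Take the lower bound: with $K$ a \emph{neighbourhood} of $\Sigma_t\cap\{|\nabla\hat u|\geq\delta''\}$ you do get
\[
\int_{\Sigma_t^{\delta''}}|\nabla\hat u|^2\;\le\;\int_{\{\hat u=t\}\cap K}|\nabla\hat u|^2\;=\;\int_{\{u_\e=t\}\cap K}|\nabla u_\e|^2+o(1),
\]
but you then need $\{u_\e=t\}\cap K\subseteq\{|\nabla u_\e|>\delta\}$, and the left inclusion $\{|\nabla\hat u|\geq\delta''\}\subseteq\{|\nabla u_\e|>\delta\}$ does \emph{not} give this, because $K$ necessarily spills outside $\{|\nabla\hat u|\geq\delta''\}$. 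The upper bound has the mirror issue: you can show $\Sigma_{\e,t}^{\delta}\subseteq K$, but then $\int_{\{\hat u=t\}\cap K}|\nabla\hat u|^2$ can strictly exceed $\int_{\Sigma_t^{\delta'}}|\nabla\hat u|^2$ since $K\cap\Sigma_t$ may contain points with $|\nabla\hat u|\leq\delta'$. In both cases you end up with one inequality going the wrong way.

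The clean fix (and this is precisely what the paper does) is to dispense with the ambient $K$ and work directly with a map between the level sets. On a compact neighbourhood of $\Sigma_t^{\delta'/2}$ in $\{|\nabla\hat u|>0\}$ the smooth convergence furnishes, via the normal flow of $\hat u$, a nearest-point projection $\pi:\Sigma_{\e,t}^{\delta}\to\Sigma_t$ with $d(p,\pi(p))=o(1)$. One then checks: (i) $\pi$ is injective (two preimages would force $f_\e(s)=u_\e(\gamma(s))$ to have a critical point near $0$, contradicting $f_\e'(0)\geq\tfrac{\delta}{2}+o(1)$ and the uniform $C^{1,\alpha}$ bound); (ii) $\pi(\Sigma_{\e,t}^{\delta})\subseteq\Sigma_t^{\delta'}$ and $\Sigma_t^{\delta''}\subseteq\pi(\Sigma_{\e,t}^{\delta})$, by the $C^1$ closeness. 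Pushing forward the integral through $\pi$ then gives both halves of \eqref{integral inequality} at once. Your ``hard part'' --- the smooth convergence of level sets on the nondegenerate locus --- is exactly what powers this; you just have to route the comparison through the induced level-set diffeomorphism rather than through an ambient cut-off set.
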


\begin{proof}
We first show how to use \eqref{integral inequality} to derive \eqref{integral limit}. The function $|\nabla\hat{u}|$ is smooth on the open manifold $\{\hat{u}=t\}\cap\{|\nabla\hat{u}|>0\}$. Thanks to Sard's theorem, there is a sequence of regular values $\delta_{i}\to0$ so that for all $i\in \mathbb{N}$,
\begin{equation}
\int_{\{\hat{u}=t\}\cap\{|\nabla\hat{u}|>\delta_i\}}|\nabla\hat{u}|^{2}
= \int_{\{\hat{u}=t\}\cap\{|\nabla\hat{u}|\geq\delta_i\}}|\nabla\hat{u}|^{2}.
\end{equation}
By letting $\e\to0$ and followed by $\delta',\delta''\to \delta_i$ in \eqref{integral inequality}, we obtain \eqref{integral limit}.

We next show \eqref{integral inequality}. For notational convenience, we introduce the following notations:
\begin{equation}
\Sigma_{t}^{\delta} := \{\hat{u}=t\}\cap\{|\nabla\hat{u}|>\delta\}, \ \
\Sigma_{\e,t}^{\delta} := \{u_{\e}=t\}\cap\{|\nabla u_{\e}|>\delta\}.
\end{equation}
By a contradiction argument, when $\e$ is sufficiently small, we see that $\Sigma_{\e,t}^{\delta}=\emptyset$ if $\Sigma_{t}^{\delta}=\emptyset$. Then \eqref{integral inequality} holds trivially. Next we may assume that $\Sigma_{t}^{\delta}\neq\emptyset$. Choose $\delta'$ and $\delta''$ such that $\delta/2<\delta'<\delta<\delta''$. For any $p\in\Sigma_{\e,t}^{\delta}$, we have
\begin{equation}
u_{\e}(p) = t, \ \ |\nabla u_{\e}|(p) > \delta.
\end{equation}
Since $u_{\e}$ converges to $\hat{u}$ in the $C^{1}$ sense, then
\begin{equation}
\hat{u}(p) = t+o(1), \ \ |\nabla \hat{u}|(p) > \delta+o(1),
\end{equation}
and so there exists a point $p'\in M$ such that
\begin{equation}
\hat{u}(p') = t, \ \ |\nabla \hat{u}|(p) > \delta', \ \ d(p,p') = o(1).
\end{equation}
It follows that
\begin{equation}\label{small distance}
d(p,\Sigma_{t}^{\delta'}) = o(1).
\end{equation}
Hence, when $\e$ is sufficiently small, the projection map $\pi:\Sigma_{\e,t}^{\delta}\to\Sigma_{t}^{\delta'}$ is well defined. Namely, for any $p\in\Sigma_{\e,t}^{\delta}$, $\pi(p)$ is the point in $\Sigma_{t}^{\delta'}$ such that
\begin{equation}
d(p,\pi(p)) = d(p,\Sigma_{t}^{\delta'}).
\end{equation}
We split the proof of \eqref{integral inequality} into three steps.

\medskip
\noindent
{\bf Step 1.} The map $\pi:\Sigma_{\e,t}^{\delta}\to\Sigma_{t}^{\delta'}$ is injective.
\medskip

Suppose that $\pi(p_{1})=\pi(p_{2})$ for some $p_{1}\neq p_{2}\in\Sigma_{\e,t}^{\delta}$. Write $q=\pi(p_{1})=\pi(p_{2})$. Let $\gamma(t)$ be the geodesic through $q$ such that
\begin{equation}
\gamma(0) = q, \ \ \gamma'(0) = \frac{\nabla\hat{u}(q)}{|\nabla\hat{u}|(q)}.
\end{equation}
Since $\pi$ is the projection, then $p_{1}$ and $p_{2}$ must lie on the geodesic $\gamma$, i.e. there exist $s_{1}\neq s_{2}$ such that
\begin{equation}
p_{1} = \gamma(s_{1}), \ \ p_{2} = \gamma(s_{2}).
\end{equation}
Define $f_\e (s)=u_{\e}(\gamma(s))$. Then by the $C^1$ convergence of $u_{\e}$ to $\hat{u}$,
\begin{equation}\label{f derivative at 0}
f'_\e(0) = \langle\nabla u_\e(q),\gamma'(0)\rangle = |\nabla\hat{u}|(q)+o(1) > \delta'+o(1) > \frac{\delta}{2}.
\end{equation}

By $p_{1},p_{2}\in\Sigma_{\e,t}^{\delta}$, we obtain $f_\e(s_{1})=f_\e(s_{2})=t$.
Then the mean value theorem shows that $f'_\e(s_{0})=0$ for some $s_{0}$ between $s_{1}$ and $s_{2}$. Thanks to \eqref{small distance}, we see that $s_{1}=o(1)$, $s_{2}=o(1)$, and so $s_{0}=o(1)$. Furthermore by the uniform $C^{1,\a}$ estimate of $u_\e$ near $\Sigma_{t}^{\delta'}$, this contradicts with \eqref{f derivative at 0} when $\e$ is sufficiently small.

\medskip
\noindent
{\bf Step 2.} $\Sigma_{t}^{\delta''}\subset\pi(\Sigma_{\e,t}^{\delta})$.
\medskip

For any $q\in\Sigma_{t}^{\delta''}$, let $\gamma(t)$ be the geodesic through $q$ such that
\begin{equation}
\gamma(0) = q, \ \ \gamma'(0) = \frac{\nabla\hat{u}(q)}{|\nabla\hat{u}|(q)}.
\end{equation}
Write $f_{\e}(s)=u_{\e}(\gamma(s))$. The similar calculation of Step 1 shows 
\begin{equation}
f_{\e}'(0) = \langle\nabla u_{\e}(q),\gamma'(0)\rangle = |\nabla\hat{u}|(q)+o(1) > \delta''+o(1).
\end{equation}
Combining this with $u\in C^{1,\alpha}$, we see that $\gamma$ must interest with $\Sigma_{\e,t}^{\delta}$. The intersection point is the preimage of $q$.

\medskip
\noindent
{\bf Step 3.} Prove \eqref{integral inequality}.
\medskip

Step 1 shows that the map $\pi:\Sigma_{\e,t}^{\delta}\to\pi(\Sigma_{\e,t}^{\delta})$ is one-to-one. Combining this with the convergence of $u_{\e}$, 
\begin{equation}
\int_{\Sigma_{\e,t}^{\delta}}|\nabla u_{\e}|^{2}
= \int_{\pi(\Sigma_{\e,t}^{\delta})}\pi_{*}(|\nabla u_{\e}|^{2})
= \int_{\pi(\Sigma_{\e,t}^{\delta})}|\nabla\hat{u}|^{2}+o(1).
\end{equation}
Using Step 2, we see that $\Sigma_{t}^{\delta''}\subset\pi(\Sigma_{\e,t}^{\delta})\subset\Sigma_{t}^{\delta'}$ and so
\begin{equation}
\int_{\Sigma_{t}^{\delta''}}|\nabla\hat{u}|^{2}+o(1)
\leq \int_{\Sigma_{\e,t}^{\delta}}|\nabla u_{\e}|^{2}
\leq \int_{\Sigma_{t}^{\delta'}}|\nabla\hat{u}|^{2}+o(1).
\end{equation}
This completes the proof. 
\end{proof}
}

\begin{lma}\label{convergence F e and F}
The function $F$ is bounded and continuous in $(a,b)$. For any $t\in(a,b)$,
\begin{equation}
\lim_{\e\rightarrow0}F_{\e}(t) = F(t).
\end{equation}
\end{lma}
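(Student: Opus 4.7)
The plan is to derive all three claims (boundedness, pointwise convergence, continuity) from two uniform estimates: the gradient bound $|\nabla u_\e|\le M$ coming from Proposition~\ref{Regularity-approximation-stability}(ii), and the level-set bound on $|\nabla u_\e|^{p-1}$ coming from Proposition~\ref{Regularity-approximation-stability}(iv), combined with the level-set convergence from Lemma~\ref{integral approximation}.

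For \emph{boundedness}, I would first use the $C^{1,\a}$ estimate in Proposition~\ref{Regularity-approximation-stability}(ii) to pick $M>0$ with $|\nabla u_\e|\le M$ in a neighborhood of $\{a\le \hat u\le b\}$ uniformly in small $\e$; by the $C^1$ convergence in (iii) we may also take $|\nabla \hat u|\le M$ there. Since $1<p\le 2$ gives $3-p\ge 1$, the pointwise inequality
\begin{equation*}
|\nabla u_\e|^2 = |\nabla u_\e|^{p-1}\cdot|\nabla u_\e|^{3-p}\le M^{3-p}|\nabla u_\e|^{p-1}
\end{equation*}
together with Proposition~\ref{Regularity-approximation-stability}(iv) yields $F_\e(t)\le M^{3-p}C(\hat u,b)$ for all $t\in(a,b)$ and small $\e$. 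Applying Lemma~\ref{integral approximation} with the Sard-sequence $\delta_i\to 0$ and then monotone convergence as $\delta_i\to 0$ transfers the bound to $F(t)$.

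For the \emph{pointwise convergence} $F_\e(t)\to F(t)$, I would fix $t$, pick the sequence $\delta_i\to 0$ from Lemma~\ref{integral approximation}, and split
\begin{equation*}
F_\e(t)=\int_{\Sigma_{\e,t}^{\delta_i}}|\nabla u_\e|^2\,dA+\int_{\{u_\e=t\}\cap\{0<|\nabla u_\e|\le\delta_i\}}|\nabla u_\e|^2\,dA.
\end{equation*}
The second term is bounded by $\delta_i^{3-p}C(\hat u,b)$ by the same $L^{p-1}$ trick, which is small uniformly in $\e$; the first term converges as $\e\to 0$ to $\int_{\Sigma_t^{\delta_i}}|\nabla\hat u|^2\,dA$ by Lemma~\ref{integral approximation}. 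Sending $\e\to 0$ and then $i\to\infty$, with the analogous decomposition for $F(t)$ and monotone convergence for $\int_{\Sigma_t^{\delta_i}}|\nabla\hat u|^2\,dA\uparrow F(t)$, gives both $\limsup_\e F_\e(t)\le F(t)$ and $\liminf_\e F_\e(t)\ge F(t)$.

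The \emph{continuity of $F$} is the part I expect to require the most care, since the Sard-sequence in Lemma~\ref{integral approximation} depends on $t$, so the convergence $F_\e\to F$ is only pointwise. My approach is to reduce to the continuity of $F^\delta(t):=\int_{\Sigma_t\cap\{|\nabla\hat u|>\delta\}}|\nabla\hat u|^2\,dA$ for fixed $\delta>0$: by the uniform tail bound, $|F(t)-F^\delta(t)|\le \delta^{3-p}C$ independent of $t$, so continuity of each $F^\delta$ plus $\delta\to 0$ yields continuity of $F$. On the open set $\{|\nabla\hat u|>\delta\}$ the $p$-Laplace operator is uniformly elliptic, so $\hat u$ is smooth there, and the continuity of $F^\delta$ can be proved by the divergence-theorem calculation used for $F_\e$ in Lemma~\ref{local Lipschitz}, applied to $\hat u$ on the smooth region $\{|\nabla\hat u|>\delta\}$ and a further $L^{p-1}$ tail control to handle the boundary of this region at the level $|\nabla\hat u|=\delta$. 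The chief technical obstacle is controlling the contribution from points near $\{|\nabla\hat u|=\delta\}$ when $t$ varies, for which I would invoke the $C^{1,\a}$ regularity of $\hat u$ on $\{|\nabla\hat u|>\delta/2\}$ together with a Sard-type choice of $\delta$ for $|\nabla\hat u|$.
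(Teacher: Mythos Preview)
Your approach to boundedness and pointwise convergence is essentially the same as the paper's: both use the splitting $|\nabla u|^2=|\nabla u|^{p-1}\cdot|\nabla u|^{3-p}$ together with the uniform $L^{p-1}$ level-set bound from Proposition~\ref{Regularity-approximation-stability}(iv), and then Lemma~\ref{integral approximation} to pass to the limit. The paper bounds $F$ directly (after first observing $\int_{\Sigma_t\cap\{|\nabla\hat u|>0\}}|\nabla\hat u|^{p-1}\le C$ as a limit of the corresponding bound for $u_\e$), whereas you bound $F_\e$ first and then transfer; this is a cosmetic difference.

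For the continuity of $F$, you are over-engineering. The paper uses exactly your reduction --- write $F=F^\delta+(\text{tail})$, bound the tail by $C\delta^{3-p}$ uniformly in $t$ --- but then simply asserts that $F^\delta(t)=\int_{\{\hat u=t\}\cap\{|\nabla\hat u|>\delta\}}|\nabla\hat u|^2$ is continuous in $t$. This is legitimate: on the open set $\{|\nabla\hat u|>\delta\}$ the function $\hat u$ is smooth with nowhere-vanishing gradient, so the level sets there form a smooth foliation and the integral varies continuously (indeed, the same projection/flow argument that underlies Lemma~\ref{integral approximation}, applied with $u_\e$ replaced by $\hat u$ at a nearby level $t'$, gives the sandwich $F^{\delta''}(t)+o(1)\le F^\delta(t')\le F^{\delta'}(t)+o(1)$ as $t'\to t$, and the tail bound closes the gap). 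There is no need for a divergence-theorem computation or careful boundary analysis at $\{|\nabla\hat u|=\delta\}$; your proposed machinery would work, but it is heavier than what the situation demands.
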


\begin{proof}
By Proposition \ref{Regularity-approximation-stability} (i) and (iii), it is clear that
\begin{equation}\label{bound gradient p-1}
\int_{\{\hat{u}=t\}\cap\{|\nabla \hat{u}|>0\}}|\nabla\hat{u}|^{p-1}
= \lim_{\delta\rightarrow0}\lim_{\e\rightarrow0}\int_{\{u_{\e}=t\}\cap\{|\nabla u_{\e}|>\delta\}}|\nabla u_\e|^{p-1} \leq C(\hat{u},b)
\end{equation}
and then
\begin{equation}
F(t) \leq \left(\sup_{\{a\leq\hat{u}\leq b\}}|\nabla\hat{u}|^{3-p}\right)\int_{\{\hat{u}=t\}\cap\{|\nabla\hat{u}|>0\}}|\nabla\hat{u}|^{p-1} \leq C(\hat{u},a,b).
\end{equation}
This shows the boundedness of $F$.

For the continuity of $F$, let $\delta >0$, we decompose $F$ into two parts:
\begin{equation}
F(t) = \int_{\{\hat{u}=t\}\cap\{|\nabla\hat{u}|>\delta\}}|\nabla\hat{u}|^{2}+\int_{\{\hat{u}=t\}\cap\{0<|\nabla\hat{u}|\leq\delta\}}|\nabla\hat{u}|^{2}.
\end{equation}
The first part is continuous with respect to $t$ while the second part satisfies
\begin{equation}\label{nabla u leq delta}
\int_{\{\hat{u}=t\}\cap\{0<|\nabla\hat{u}|\leq\delta\}}|\nabla\hat{u}|^{2}
\leq \delta^{3-p}\int_{\{\hat{u}=t\}\cap\{|\nabla\hat{u}|>0\}}|\nabla\hat{u}|^{p-1} \leq C(\hat{u},b)\delta^{3-p}.
\end{equation}
These imply that $F$ is continuous.

For the convergence, we define
\begin{equation}
F_{\e,\delta}(t) = \int_{\{u_{\e}=t\}\cap\{|\nabla u_{\e}|>\delta\}}|\nabla u_{\e}|^{2}.
\end{equation}
By Proposition \ref{Regularity-approximation-stability} (iv), we estimate the difference
\begin{equation}\label{difference F e and F e delta}
\begin{split}
|F_{\e}(t)-F_{\e,\delta}(t)|
= {} & \int_{\{u_{\e}=t\}\cap\{0<|\nabla u_{\e}|\leq\delta\}}|\nabla u_{\e}|^{2} \\
\leq {} & \delta^{3-p}\int_{\{u_{\e}=t\}\cap\{|\nabla u_{\e}|>0\}}|\nabla u_{\e}|^{p-1} \\[1.5mm]
\leq {} & C(\hat{u},b)\delta^{3-p}.
\end{split}
\end{equation}
Combining this with \eqref{nabla u leq delta}  and Lemma \ref{integral approximation},
\begin{equation}
|F_{\e}(t)-F(t)|
\leq C(\hat{u},b)\delta_{i}^{3-p}+\left|\int_{\{u_{\e}=t\}\cap\{|\nabla u_{\e}|>\delta_{i}\}}|\nabla u_{\e}|^{2}
-\int_{\{\hat{u}=t\}\cap\{|\nabla\hat{u}|>\delta_{i}\}}|\nabla\hat{u}|^{2}\right|.
\end{equation}
Letting $\e\rightarrow0$ and followed by $\delta_{i}\rightarrow0$, we obtain $\lim_{\e\rightarrow0}F_{\e}(t)=F(t)$.
\end{proof}

\section{monotonicity from the regularized equation}\label{Sec:regularized-monot}

In this section, we will obtain (almost) monotonicity of $F_\e$ using the idea of \cite{MunteanuWang2021}. We will proceed using the notations and set-up in Section~\ref{sec: pre}.

When $|\nabla u_{\e}|\neq 0$, the regularized equation \eqref{regularized-harmonic} can be written as
\begin{equation}\label{regularozed-harmonic-2}
\Delta u_{\e}=-\frac{\phi_{\e}'(|\Na u_{\e}|)}{\phi_{\e}(|\Na u_{\e}|)}\cdot\langle\nabla|\nabla u_{\e}|,\nabla u_{\e} \rangle.
\end{equation}
Write $\nu=\frac{\nabla u_{\e}}{|\nabla u_{\e}|}$, then on the regular level set of $u_{\e}$, 
\begin{equation}
\Delta u_{\e}=-\eta_{\e}(|\nabla u_{\e}|) \cdot(u_{\e})_{\nu\nu}
\end{equation}
with $\eta_{\e}(s)=(\log \phi_{\e}(s))'\cdot s$. It is clear that
\begin{equation}\label{choice-eta}
\eta_{\e}(s)=\frac{(p-2)s^2}{s^2+\e},\quad \; \eta_{\e}'(s)=\frac{2(p-2)\e s}{(s^2+\e)^2}.
\end{equation}
In particular, when $\e=0$, $\phi(s)=s^{p-2}$ and $\eta(s)=p-2$, \eqref{regularozed-harmonic-2} corresponds to the original $p$-harmonic equation.

\subsection{Improved Kato inequality}
We start with the improved Kato inequality which gives sharp lower bound of the Hessian compared to the Euclidean $p$-Green function.
\begin{lma}\label{improved-Kato}
At $q\in M$ where $|\nabla u_{\e}|\neq0$, we have
\begin{equation}
|\nabla^2 u_{\e}|^2\ge \min \left(\frac{\eta_{\e}^2+2\eta_{\e}+3}{2},2 \right)|\nabla|\nabla u_{\e}||^2.
\end{equation}
and
\begin{equation}
|\nabla^2 u_{\e}|^2 \geq \left(\frac{\eta_{\e}^2+2\eta_{\e}+3}{2}\right)\langle\nabla|\nabla u_{\e}|,\nu \rangle^{2}.
\end{equation}
\end{lma}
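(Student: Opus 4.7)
The standard strategy for improved Kato inequalities for $p$-harmonic type equations is to work in a well-chosen orthonormal frame at the point $q$, combine the trace equation \eqref{regularozed-harmonic-2} with the relation between $\nabla|\nabla u_\e|$ and mixed partial derivatives of $u_\e$, and then exploit a Cauchy--Schwarz bound on the traceless $(n-1)\times(n-1)$ block of the Hessian. I would carry this out in $n=3$ (the dimension we care about; the argument is written so that the abstract linear algebra step works for any $n$, but the constants match the stated formula only when we use the factor $1/(n-1)$ with $n=3$).

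\noindent\textbf{Step 1 (choice of frame and basic identities).} Pick an orthonormal frame $\{e_1,e_2,e_3\}$ at $q$ with $e_1=\nu=\nabla u_\e/|\nabla u_\e|$. Write $w=|\nabla u_\e|$ and $u_{ij}=\nabla^2 u_\e(e_i,e_j)$. Then $u_1=w$ and $u_k=0$ for $k\geq 2$, so differentiating $w^2=|\nabla u_\e|^2$ gives $w\,e_k(w)=\sum_i u_i u_{ik}=w\,u_{1k}$, hence $e_k(w)=u_{1k}$ for all $k$. In particular
\begin{equation*}
|\nabla w|^2=\sum_{k}u_{1k}^2,\qquad \langle\nabla w,\nu\rangle^2=u_{11}^2.
\end{equation*}
From \eqref{regularozed-harmonic-2}, $\Delta u_\e=-\eta_\e\,u_{11}$, so the trace condition becomes
\begin{equation*}
u_{22}+u_{33}=-(1+\eta_\e)\,u_{11}.
\end{equation*}

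\noindent\textbf{Step 2 (block decomposition of the Hessian norm).} Split
\begin{equation*}
|\nabla^2 u_\e|^2=u_{11}^2+2(u_{12}^2+u_{13}^2)+(u_{22}^2+2u_{23}^2+u_{33}^2).
\end{equation*}
Cauchy--Schwarz on the trace constraint gives $u_{22}^2+u_{33}^2\geq\tfrac12(u_{22}+u_{33})^2=\tfrac12(1+\eta_\e)^2 u_{11}^2$, so the bracketed block is at least $\tfrac12(1+\eta_\e)^2 u_{11}^2$. Using the identities from Step~1, this yields
\begin{equation*}
|\nabla^2 u_\e|^2\geq \frac{\eta_\e^2+2\eta_\e+3}{2}\,\langle\nabla w,\nu\rangle^2 + 2\bigl(|\nabla w|^2-\langle\nabla w,\nu\rangle^2\bigr),
\end{equation*}
which is already the second inequality of the lemma (dropping the non-negative last term).

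\noindent\textbf{Step 3 (deriving the first inequality and the $\min$).} Rewrite the bound as
\begin{equation*}
|\nabla^2 u_\e|^2\geq 2|\nabla w|^2+\frac{\eta_\e^2+2\eta_\e-1}{2}\,\langle\nabla w,\nu\rangle^2.
\end{equation*}
Since $0\leq\langle\nabla w,\nu\rangle^2\leq|\nabla w|^2$, the conclusion now depends only on the sign of $\eta_\e^2+2\eta_\e-1$: if it is non-negative (equivalently, $(\eta_\e^2+2\eta_\e+3)/2\geq 2$) we drop the correction term and get the constant $2$; if it is negative, we replace $\langle\nabla w,\nu\rangle^2$ by $|\nabla w|^2$ and the constant becomes $(\eta_\e^2+2\eta_\e+3)/2$. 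In both cases the coefficient is exactly $\min\bigl((\eta_\e^2+2\eta_\e+3)/2,\,2\bigr)$, which is the first stated inequality.

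\noindent\textbf{Expected difficulty.} There is no real obstacle here: the only non-routine ingredient is recognizing that $e_k(w)=u_{1k}$ to rewrite everything in terms of the Hessian, after which the computation is purely algebraic. The mild subtlety is that one should be careful to record the inequality in the form that retains the gap $|\nabla w|^2-\langle\nabla w,\nu\rangle^2$ before collapsing to the $\min$, since the intermediate form (the second inequality) is what will actually be used later in the monotonicity computation.
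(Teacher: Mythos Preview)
Your proof is correct and follows essentially the same route as the paper's: choose the frame $e_{1}=\nu$, use $e_{k}(|\nabla u_{\e}|)=(u_{\e})_{1k}$, apply Cauchy--Schwarz to the $2\times 2$ block to get $(u_{\e})_{22}^{2}+(u_{\e})_{33}^{2}\geq\tfrac{1}{2}(1+\eta_{\e})^{2}(u_{\e})_{11}^{2}$, and read off both inequalities. The only cosmetic difference is that the paper passes directly from $\tfrac{\eta_{\e}^{2}+2\eta_{\e}+3}{2}(u_{\e})_{11}^{2}+2\bigl((u_{\e})_{12}^{2}+(u_{\e})_{13}^{2}\bigr)$ to the $\min$ in one line, whereas you spell out the two cases according to the sign of $\eta_{\e}^{2}+2\eta_{\e}-1$; the content is identical.
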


\begin{proof}
We will omit the index $\e$ for notation convenience. Choose $\{e_j\}_{j=1}^3$ such that $e_1=\nu=\frac{\nabla u}{|\nabla u|}$. Near $q$, $u$ is smooth by the standard elliptic PDE theory. Then we have
\begin{equation}
\nabla_j |\nabla u|=\frac{\nabla^2 u(e_j,\nabla u)}{|\nabla u|}=u_{1j}.
\end{equation}
This implies
\begin{equation}
|\nabla|\nabla u||^2=|u_{11}|^2+|u_{12}|^2+|u_{13}|^2.
\end{equation}
It follows from $\Delta u=-\eta\cdot\langle\nabla|\nabla u|,\nu\rangle$ that
\begin{equation}
u_{22}+u_{33}=-(\eta+1)u_{11}.
\end{equation}
By the Cauchy-Schwarz inequality,
\begin{equation}
\begin{split}
|\nabla^{2}u|^{2} \geq {} & |u_{11}|^{2}+|u_{22}|^{2}+|u_{33}|^{2}+2|u_{12}|^{2}+2|u_{13}|^{2} \\[1mm]
\geq {} & |u_{11}|^{2}+\frac{1}{2}|u_{22}+u_{33}|^{2}+2|u_{12}|^{2}+2|u_{13}|^{2} \\
= {} & |u_{11}|^{2}+\frac{(\eta+1)^{2}}{2}u_{11}^{2}+2|u_{12}|^{2}+2|u_{13}|^{2} \\[1mm]
= {} & \left(\frac{\eta^2+2\eta+3}{2}\right)|u_{11}|^{2}+2|u_{12}|^{2}+2|u_{13}|^{2} \\
\geq {} & \min \left(\frac{\eta^2+2\eta+3}{2},2 \right)|\nabla|\nabla u||^2
\end{split}
\end{equation}
and
\begin{equation}
\begin{split}
|\nabla^{2}u|^{2} \geq {} & |u_{11}|^{2}+|u_{22}|^{2}+|u_{33}|^{2} \\[1mm]
\geq {} & |u_{11}|^{2}+\frac{1}{2}|u_{22}+u_{33}|^{2} \\
= {} & |u_{11}|^{2}+\frac{(\eta+1)^{2}}{2}u_{11}^{2} \\[1mm]
= {} & \left(\frac{\eta^2+2\eta+3}{2}\right)|u_{11}|^{2} \\
= {} & \left(\frac{\eta^2+2\eta+3}{2}\right)\langle\nabla|\nabla u|,\nu \rangle^{2}.
\end{split}
\end{equation}
This completes the proof.
\end{proof}

\subsection{Almost monotonicity}
In this sub-section, we will establish the almost monotonicity on the regularized equation. 
\begin{thm}\label{monotonicity-regularized-THM}
Suppose that $p\in (1,2]$, $(5-p)+(3p-7)\b \geq 0$ and $\lambda$ satisfies
\begin{equation}
\frac{\lambda(5-p)(\b+1)}{2(3-p)}\geq \b(\b+1)+\frac{\lambda^2 (5-p)}{4(3-p)}.
\end{equation}
For regular values $t_{1}<t_{2}$ of $u_{\e}$, the function
\begin{equation}
\mathcal{H}_\e(t)= t^{-\b} F'_\e(t)+\left(\b-\frac{\lambda(5-p)}{2(3-p)}\right)t^{-\b-1}F_\e(t) -\frac{4\pi(3-p)}{(p-1)(\b-1)}t^{-\b+1}
\end{equation}
satisfies
\begin{equation}
\mathcal{H}_\e(t_{1})-\mathcal{H}_\e(t_{2})\leq \mathbf{E}_{\e,t_{1},t_{2}}+C\sqrt{\e}
\end{equation}
for some constant $C$ independent of $\e$, where
\begin{equation}
\mathbf{E}_{\e, t_{1}, t_{2}} =
\left(t^{-\b}\int_{\Sigma_t} \frac{p-2-\eta}{3-p}\cdot\frac{\eta-1}{\eta }\cdot\Delta u_{\e} \right)\bigg|_{t_{2}}^{t_{1}}.
\end{equation}
\end{thm}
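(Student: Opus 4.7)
The plan is to adapt the Munteanu--Wang level-set argument for $p=2$ in \cite{MunteanuWang2021} to the smooth solution $u_\e$ of the regularized equation \eqref{regularized-harmonic}, combining it with the improved Kato inequality of Lemma~\ref{improved-Kato}. The four key ingredients are: a pointwise Bochner identity for $|\nabla u_\e|$ sharpened by Lemma~\ref{improved-Kato}; the Gauss equation on regular level sets; the Gauss--Bonnet theorem applied to $\Sigma_{\e,t}$, which is a topological $2$-sphere thanks to Lemma~\ref{one-comp} together with $H_2(M,\mathbb{Z})=0$; and the sign assumption $R_M\geq 0$.

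First I would differentiate $F_\e$ at a regular value $t$ via the co-area formula, obtaining an expression in $\langle\nabla|\nabla u_\e|,\nu\rangle$ and $\Delta u_\e$. Using $\Delta u_\e = -\eta_\e\langle\nabla|\nabla u_\e|,\nu\rangle$ on the regular set converts the main piece into an integral of $(2-\eta_\e)\langle\nabla|\nabla u_\e|,\nu\rangle$ over $\Sigma_{\e,t}$. Next, I would apply Bochner's formula to $|\nabla u_\e|$ and invoke the Gauss equation to decompose $|\nabla^2 u_\e|^2+\Ric(\nabla u_\e,\nabla u_\e)$ in terms of $R_M$, the Gaussian curvature $K_{\Sigma_{\e,t}}$, the second fundamental form, and the tangential/normal split of $\nabla|\nabla u_\e|$. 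After integrating over $\Sigma_{\e,t}$, the intrinsic divergences vanish by Stokes, Gauss--Bonnet contributes $\int K_{\Sigma_{\e,t}}\,dA = 4\pi$, and $R_M\geq 0$ is discarded. Multiplying by $t^{-\beta}$ and rearranging produces the claimed differential monotonicity of $\mathcal{H}_\e$; the algebraic constraints $(5-p)+(3p-7)\beta\geq 0$ and the $\lambda$-inequality are precisely what forces the coefficients multiplying $|\nabla^{\Sigma_{\e,t}}|\nabla u_\e||^2$ and $\langle\nabla|\nabla u_\e|,\nu\rangle^2$ to remain non-negative once Lemma~\ref{improved-Kato} is used.

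Finally I would integrate the resulting pointwise differential inequality over $[t_{1},t_{2}]$ (both endpoints regular). The terms carrying the factor $(p-2-\eta_\e)$, which would vanish if $\eta_\e$ were the genuine $p$-harmonic constant $p-2$, are reassembled, via one further divergence theorem application on $\{t_1<u_\e<t_2\}$, into the boundary quantity $\mathbf{E}_{\e,t_{1},t_{2}}$. The residual $C\sqrt{\e}$ correction arises from two sources: (a)~terms involving $\eta_\e'$, which carry an explicit factor of $\e$ by \eqref{choice-eta} and are controlled by the uniform $L^{p-1}$ level-set bound from Proposition~\ref{Regularity-approximation-stability}(iv); and (b)~a small-gradient region around $\{|\nabla u_\e|\lesssim \e^{1/4}\}$, which is truncated and again bounded using the same $L^{p-1}$ estimate sliced by the co-area formula.

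I expect the main obstacle to lie in the middle paragraph: the algebraic bookkeeping required to assemble the Bochner/Kato terms, the Gauss-equation terms, and the co-area derivative of $F_\e$ into a single inequality whose weights exactly match those appearing in $\mathcal{H}_\e$. It is this matching that pins down the stated hypotheses $(5-p)+(3p-7)\beta\ge 0$ and the quadratic constraint on $\lambda$; getting all cross terms to have a sign, while simultaneously isolating the $(p-2-\eta_\e)$ contribution as the boundary term $\mathbf{E}_{\e,t_{1},t_{2}}$, is the technical heart of the proof.
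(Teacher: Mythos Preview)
Your overall strategy matches the paper's: Bochner formula plus Gauss equation plus Gauss--Bonnet (using Lemma~\ref{one-comp}) plus the improved Kato inequality, adapting the Munteanu--Wang argument to the regularized solution $u_\e$. Two points, however, are misidentified in your sketch and would cause trouble if carried out as written.

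First, the role of the $\lambda$-constraint. It does \emph{not} enter by forcing pointwise coefficients of $|\nabla^{\Sigma}|\nabla u_\e||^2$ or $\langle\nabla|\nabla u_\e|,\nu\rangle^2$ to be non-negative. After the Bochner/Gauss/Kato steps one is left with a \emph{favorable} negative term $-\tfrac{(5-p)(p-1)}{4(3-p)^2}\int t^{-\beta}\int_{\Sigma_t}(1-\eta)^2|\nabla u_\e|^{-2}\langle\nabla|\nabla u_\e|,\nu\rangle^2$ and an \emph{unfavorable} positive term $\tfrac{\beta(\beta+1)(p-1)}{3-p}\int t^{-\beta-2}F_\e(t)$. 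The paper uses Cauchy--Schwarz twice: first $|F_\e'(t)|\le F_\e(t)^{1/2}\bigl(\int_{\Sigma_t}(1-\eta)^2|\nabla u_\e|^{-2}\langle\nabla|\nabla u_\e|,\nu\rangle^2\bigr)^{1/2}$, then the AM--GM $2\lambda t^{-1}F_\e'\le F_\e^{-1}(F_\e')^2+\lambda^2 t^{-2}F_\e$. The quadratic $\lambda$-inequality in the hypothesis is exactly what allows the second step to absorb the bad term and convert the residual into the $t^{-\beta-1}F_\e(t)$ piece of $\mathcal{H}_\e$. (Incidentally, the first-variation formula gives $F_\e'(t)=\int_{\Sigma_t}(1-\eta_\e)\langle\nabla|\nabla u_\e|,\nu\rangle$, not $(2-\eta_\e)$.)

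Second, the $C\sqrt{\e}$ error. No small-gradient truncation is performed, and the $\eta_\e'$ terms are handled by \emph{sign} ($\eta_\e'\le 0$), not by $\e$-smallness. The $\sqrt{\e}$ arises from terms carrying the factor $\eta_\e-(p-2)=\tfrac{-(p-2)\e}{|\nabla u_\e|^2+\e}$: one is the split-off Ricci piece $\int\tfrac{\eta_\e+2-p}{3-p}u_\e^{-\beta}|\nabla u_\e|^{-1}\Ric(\nabla u_\e,\nabla u_\e)$, bounded via $\tfrac{\e|\nabla u_\e|}{|\nabla u_\e|^2+\e}\le\tfrac{1}{2}\sqrt{\e}$; the other comes from the cross-term produced when applying AM--GM to $\mathbf{I}_6$. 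The boundary quantity $\mathbf{E}_{\e,t_1,t_2}$ itself is not obtained by an extra divergence theorem but by splitting $\eta_\e=(p-2)+(\eta_\e-(p-2))$ in the boundary integrals $\int_{\Sigma_{t_i}}\tfrac{1-\eta_\e}{3-p}t_i^{-\beta}\Delta u_\e$ that already appear. Finally, to make the divergence theorem rigorous on $\{t_1<u_\e<t_2\}$ (which may contain critical points), the paper works with $v_\delta=\sqrt{|\nabla u_\e|^2+\delta}$ and lets $\delta\to 0$; you should expect to need the same device.
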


\begin{proof}
We will omit the index $\e$ for convenience. Near the regular value, the family $t\mapsto \Sigma_t$ has normal velocity $\partial_t=|\nabla u|^{-1}\nu$ where $\nu=\frac{\nabla u}{|\nabla u|}$. 

The mean curvature $H$ on $\Sigma_t$ is given by
\begin{equation}
\begin{split}
H = {} & |\nabla u|^{-1}(\Delta u-\nabla^{2}u(\nu,\nu)) \\[1mm]
= {} & -\phi^{-1}\phi'\langle \nabla|\nabla u|,\nu \rangle-|\nabla u|^{-1}\langle \nabla|\nabla u|,\nu \rangle \\[1mm]
= {} & -(\phi^{-1}\phi'+|\nabla u|^{-1})\langle \nabla|\nabla u|,\nu \rangle,
\end{split}
\end{equation}
so that
\begin{equation}
\begin{split}
F'(t)&=\int_{\Sigma_t} |\nabla u|^{-1} \langle  \nabla |\nabla u|^2,\nu\rangle+  H |\nabla u|\\
&=\int_{\Sigma_t} (1-\eta) \langle \nabla|\nabla u|,\nu\rangle.
\end{split}
\end{equation}

To apply the divergence theorem, we will follow the treatment of critical values as in \cite{ChodoshLi2021}. Define
\begin{equation*}
v_\delta=\sqrt{|\nabla u|^2+\delta}
\end{equation*}
so that if $t$ is regular value of $u$, we have
\begin{equation}\label{time-der}
t^{-\b}F'(t) = \int_{\Sigma_t} (1-\eta)u^{-\b} \langle \nabla |\nabla u|,\nu\rangle
= \lim_{\delta\to 0} \int_{\Sigma_t} (1-\eta)u^{-\b} \langle \nabla v_\delta,\nu\rangle.
\end{equation}
Since $t_1<t_2$ are regular values of $u$, then
\begin{equation}
\Omega_{t_1,t_2}=\{x\in \Omega: t_1\leq u(x)\leq t_2\}
\end{equation}
is a domain with smooth boundary. We divide the argument into four steps.

\medskip
\noindent
{\bf Step 1.} Upper bound of $G(t_1)-G(t_2)$.
\medskip

Define
\begin{equation}
G(t)=(3-p)^{-1}t^{-\b} F'(t).
\end{equation}
Then the divergence theorem shows
\begin{equation}\label{G A+B}
\begin{split}
& G(t_1)-G(t_2) \\
= {} & \lim_{\delta\to 0} \left(\int_{\Sigma_t}\frac{1-\eta}{3-p} u^{-\b} \langle\nabla v_\delta,\nu \rangle  \right)\bigg|_{t_2}^{t_1}\\
= {} & \lim_{\delta\to0}\left(-\int_{\Omega_{t_1,t_2}} \frac{1-\eta}{3-p} u^{-\b} \Delta v_\delta
-\int_{\Omega_{t_1,t_2}}\left\langle \nabla v_\delta,\nabla\left(\frac{1-\eta}{3-p}u^{-\b} \right)\right\rangle\right) \\[2mm]
= {} & \lim_{\delta\to 0}\left(\mathbf{A}+\mathbf{B}\right).
\end{split}
\end{equation}
We now estimate integrals $\mathbf{A}$ and $\mathbf{B}$ separately. The Bochner formula implies
\begin{equation*}
\begin{split}
\Delta v_{\delta} = {} & \frac{1}{2}v_{\delta}^{-1}\Delta|\nabla u|^{2}-\frac{1}{4}v_{\delta}^{-3}|\nabla|\nabla u|^{2}|^{2} \\
= {} & v_{\delta}^{-1}\left(|\nabla^{2}u|^{2}-\frac{1}{4}v_{\delta}^{-2}|\nabla|\nabla u|^{2}|^{2}\right)
+v_{\delta}^{-1}\langle\nabla\Delta u,\nabla u\rangle+v_{\delta}^{-1}\Ric_{M}(\nabla u,\nabla u),
\end{split}
\end{equation*}
and so
\begin{equation}\label{integral A}
\begin{split}
\mathbf{A}
= {} & \int_{\Omega_{t_1,t_2}} -\frac{1-\eta}{3-p}u^{-\b}  v_\delta^{-1} \left(|\nabla^{2}u|^{2}-\frac{1}{4}v_{\delta}^{-2}|\nabla|\nabla u|^{2}|^{2}\right)\\
& +\int_{\Omega_{t_1,t_2}}-\frac{1-\eta}{3-p}u^{-\b}  v_\delta^{-1} \langle\nabla\Delta u,\nabla u\rangle\\
& +\int_{\Omega_{t_1,t_2}}-\frac{1-\eta}{3-p}u^{-\b}  v_\delta^{-1}\Ric_{M}(\nabla u,\nabla u).
\end{split}
\end{equation}
We now focus on the second term in $\mathbf{A}$. By the divergence theorem and \eqref{regularized-harmonic}, we have
\begin{equation}\label{second term of A}
\begin{split}
&\quad \int_{\Omega_{t_1,t_2}}-\frac{1-\eta}{3-p}u^{-\b} v_\delta^{-1}\langle\nabla\Delta u,\nabla u\rangle \\
&=\left(\int_{\Sigma_t}\frac{1-\eta}{3-p}u^{-\b}v_\delta^{-1}\Delta u \cdot |\nabla u|\right)\bigg|_{t_2}^{t_1}+\int_{\Omega_{t_1,t_2}}  \Delta u\cdot \mathrm{div}\left(\frac{1-\eta}{3-p}u^{-\b} v_\delta^{-1} \nabla u\right)\\
&=\int_{\Sigma_{t_1}}\frac{1-\eta}{3-p}t_{1}^{-\b}v_\delta^{-1}\Delta u \cdot  |\nabla u|
-\int_{\Sigma_{t_2}}\frac{1-\eta}{3-p}t_{2}^{-\b}v_\delta^{-1}\Delta u \cdot  |\nabla u|\\
&\quad +\int_{\Omega_{t_1,t_2}}  \Delta u \left\langle\phi\nabla u, \nabla\left( \frac{1-\eta}{3-p}u^{-\b} v_\delta^{-1} \phi^{-1} \right)\right\rangle.
\end{split}
\end{equation}
For the last term in \eqref{second term of A}, we compute
\begin{equation}
\begin{split}
& \int_{\Omega_{t_1,t_2}}  \Delta u \left\langle\phi\nabla u, \nabla\left( \frac{1-\eta}{3-p}u^{-\b} v_\delta^{-1} \phi^{-1} \right)\right\rangle \\
= {} & -\int_{\Omega_{t_1,t_2}}\frac{\eta'}{3-p}\cdot \Delta u\cdot u^{-\beta}\cdot v_{\delta}^{-1}\cdot\langle\nabla|\nabla u|,\nabla u\rangle \\
& -\int_{\Omega_{t_1,t_2}}\beta\frac{1-\eta}{3-p}\cdot  \Delta u\cdot v_{\delta}^{-1}\cdot u^{-\beta-1}\cdot|\nabla u|^{2} \\
& -\int_{\Omega_{t_1,t_2}}\frac{1-\eta}{3-p}\cdot  \Delta u\cdot u^{-\beta}\cdot v_{\delta}^{-3}\cdot|\nabla u|\cdot\langle\nabla|\nabla u|,\nabla u\rangle \\
& -\int_{\Omega_{t_1,t_2}}\frac{1-\eta}{3-p}\cdot  \Delta u\cdot u^{-\beta}\cdot v_{\delta}^{-1}\cdot\eta\cdot\left\langle\nabla|\nabla u|,\frac{\nabla u}{|\nabla u|}\right\rangle.
\end{split}
\end{equation}
For the last term (Ricci term) in $\mathbf{A}$,
\begin{equation*}
\begin{split}
& -\int_{\Omega_{t_1,t_2}}\frac{1-\eta}{3-p} u^{-\b} v_\delta^{-1} \Ric_{M}(\nabla u,\nabla u)\\
&=-\int_{\Omega_{t_1,t_2}}u^{-\b} v_\delta^{-1} \Ric_{M}(\nabla u,\nabla u)
+\int_{\Omega_{t,\tau}}\frac{\eta+2-p}{3-p} u^{-\b} v_\delta^{-1}\Ric_{M}(\nabla u,\nabla u).
\end{split}
\end{equation*}
Substituting the above into \eqref{integral A},
\begin{equation}\label{A}
\begin{split}
\mathbf{A}&=-\int_{\Omega_{t_1,t_2}}\frac{1-\eta}{3-p}u^{-\b} v_\delta^{-1} \left(|\nabla^{2}u|^{2}-\frac{1}{4}v_{\delta}^{-2}|\nabla|\nabla u|^{2}|^{2}\right)\\
&\quad +\int_{\Sigma_{t_1}}\frac{1-\eta}{3-p}t_{1}^{-\b}v_\delta^{-1}\Delta u \cdot  |\nabla u|
-\int_{\Sigma_{t_2}}\frac{1-\eta}{3-p}t_{2}^{-\b}v_\delta^{-1}\Delta u \cdot  |\nabla u|\\
&\quad -\int_{\Omega_{t_1,t_2}}\frac{\eta'}{3-p}\cdot \Delta u\cdot u^{-\beta}\cdot v_{\delta}^{-1}\cdot\langle\nabla|\nabla u|,\nabla u\rangle \\
& \quad -\beta\int_{\Omega_{t_1,t_2}}\frac{1-\eta}{3-p}\cdot  \Delta u\cdot u^{-\beta-1}\cdot v_{\delta}^{-1}\cdot|\nabla u|^{2} \\
& \quad -\int_{\Omega_{t_1,t_2}}\frac{1-\eta}{3-p}\cdot  \Delta u\cdot u^{-\beta}\cdot v_{\delta}^{-3}\cdot|\nabla u|\cdot\langle\nabla|\nabla u|,\nabla u\rangle \\
& \quad -\int_{\Omega_{t_1,t_2}}\frac{1-\eta}{3-p}\cdot  \Delta u\cdot u^{-\beta}\cdot v_{\delta}^{-1}\cdot\eta\cdot\left\langle\nabla|\nabla u|,\frac{\nabla u}{|\nabla u|}\right\rangle\\
&\quad -\int_{\Omega_{t_1,t_2}}u^{-\b} v_\delta^{-1}  \Ric_{M}(\nabla u,\nabla u)
+\int_{\Omega_{t_1,t_2}}\frac{\eta+2-p}{3-p} u^{-\b} v_\delta^{-1}\Ric_{M}(\nabla u,\nabla u).
\end{split}
\end{equation}
For the first term in $\mathbf{A}$, thanks to \eqref{choice-eta} and classical Kato inequality,
\begin{equation}
-\frac{1-\eta}{3-p} u^{-\b} v_\delta^{-1} \left(|\nabla^{2}u|^{2}-\frac{1}{4}v_{\delta}^{-2}|\nabla|\nabla u|^{2}|^{2}\right) \leq 0.
\end{equation}
Let $\mathcal{U}$ be the set of all regular values of $u$. Using the co-area formula, we obtain a upper bound of the first term in $\mathbf{A}$:
\begin{equation}\label{A 1}
\begin{split}
& -\int_{\Omega_{t_1,t_2}}\frac{1-\eta}{3-p}u^{-\b} v_\delta^{-1} \left(|\nabla^{2}u|^{2}-\frac{1}{4}v_{\delta}^{-2}|\nabla|\nabla u|^{2}|^{2}\right)\\
\leq {} & -\int_{(t_1,t_2)\cap\mathcal{U}} \int_{\Sigma_t}\frac{1-\eta}{3-p} u^{-\b} v_\delta^{-1}|\nabla u|^{-1}
\left(|\nabla^{2}u|^{2}-\frac{1}{4}v_{\delta}^{-2}|\nabla|\nabla u|^{2}|^{2}\right).
\end{split}
\end{equation}
On the other hand, we compute the integral $\mathbf{B}$:
\begin{equation}\label{B}
\begin{split}
\mathbf{B}&=-\int_{\Omega_{t_1,t_2}}\left\langle \nabla v_\delta,\nabla\left(\frac{1-\eta}{3-p}u^{-\b} \right)\right\rangle\\
&=   -\int_{\Omega_{t_1,t_2}}v_{\delta}^{-1}\cdot|\nabla u|\cdot\left\langle\nabla |\nabla u|,\nabla\left(\frac{1-\eta}{3-p}\cdot u^{-\beta}\right)\right\rangle \\
&=  +\int_{\Omega_{t_1,t_2}}v_{\delta}^{-1}\cdot|\nabla u|\cdot\left\langle\nabla |\nabla u|,\frac{\eta'}{3-p}\cdot u^{-\beta}\cdot\nabla|\nabla u|\right\rangle \\
& \quad -\int_{\Omega_{t_1,t_2}}v_{\delta}^{-1}\cdot|\nabla u|\cdot\left\langle\nabla |\nabla u|,\frac{1-\eta}{3-p}
\cdot(-\beta)\cdot u^{-\beta-1}\cdot\nabla u\right\rangle \\
&=  \int_{\Omega_{t_1,t_2}}\frac{\eta'}{3-p}\cdot u^{-\beta}\cdot v_{\delta}^{-1}\cdot|\nabla u|\cdot\big|\nabla|\nabla u|\big|^{2} \\
& \quad +\beta\int_{\Omega_{t_1,t_2}}\frac{1-\eta}{3-p}\cdot u^{-\beta-1}\cdot v_{\delta}^{-1}\cdot|\nabla u|\cdot\left\langle\nabla |\nabla u|,\nabla u\right\rangle.
\end{split}
\end{equation}
Substituting \eqref{A}, \eqref{A 1} and \eqref{B} into \eqref{G A+B} and using Lebesgue's dominated convergence theorem, we may let $\delta\to 0$ to obtain
\begin{equation}\label{A+B}
\begin{split}
&\quad G(t_1)-G(t_2)= \lim_{\delta\to 0} (\mathbf{A}+\mathbf{B})\\
&\leq -\int_{(t_1,t_2)\cap\mathcal{U}} \int_{\Sigma_t} \frac{1-\eta}{3-p} u^{-\b}|\nabla u|^{-2} \left(|\nabla^2 u|^2-|\nabla |\nabla u||^2 \right)\\
&\quad +\int_{\Sigma_{t_1}}\frac{1-\eta}{3-p}t_1^{-\b}\Delta u -\int_{\Sigma_\tau}\frac{1-\eta}{3-p}t_{2}^{-\b}\Delta u \\
&\quad -\int_{\Omega_{t_1,t_2}}u^{-\b} |\nabla u|^{-1}  \Ric(\nabla u,\nabla u)
+\int_{\Omega_{t,\tau}}\frac{\eta-p+2}{3-p} u^{-\b} |\nabla u|^{-1} \Ric(\nabla u,\nabla u)\\
&\quad +\b \int_{\Omega_{t_1,t_2}} \frac{1-\eta}{3-p} u^{-\b-1}
\langle\nabla|\nabla u|,\nabla u \rangle-\b\int_{\Omega_{t,\tau}} \frac{1-\eta}{3-p} \Delta u \cdot u^{-\b-1} |\nabla u|\\
&\quad +\int_{\Omega_{t_1,t_2}} \left(\frac{-\eta' |\nabla u|}{3-p}-\frac{1-\eta}{3-p} -\frac{(1-\eta)\eta}{3-p}\right) u^{-\b}\Delta u \cdot |\nabla u|^{-2}\left\langle\nabla |\nabla u|,\nabla u \right\rangle \\
&\quad +\int_{\Omega_{t_1,t_2}} \frac{\eta'}{3-p} u^{-\b} |\nabla |\nabla u||^2.
\end{split}
\end{equation}

\medskip
\noindent
{\bf Step 2.} Estimate each term in \eqref{A+B}.
\medskip

For the boundary terms in \eqref{A+B}, using \eqref{regularozed-harmonic-2} and \eqref{time-der},
\begin{equation}\label{mono-equ-1}
\begin{split}
\int_{\Sigma_{t_{1}}} \frac{1-\eta}{3-p} t_{1}^{-\b} \Delta u
&=-t_{1}^{-\b} \int_{\Sigma_{t_{1}}} \frac{1-\eta}{3-p} \cdot \eta \cdot  \langle \nabla |\nabla u|,\nu \rangle\\
&=-t_{1}^{-\b}\cdot \frac{p-2}{3-p} \int_{\Sigma_t} (1-\eta) \langle \nabla |\nabla u|,\nu \rangle\\
&\quad -t_{1}^{-\b} \int_{\Sigma_{t_{1}}} \frac{(1-\eta)(\eta-p+2)}{3-p} \langle \nabla|\nabla u|,\nu\rangle\\
&=-t_{1}^{-\b}\cdot \frac{p-2}{3-p} F'(t_1) -t_{1}^{-\b} \int_{\Sigma_t} \frac{(1-\eta)(\eta-p+2)}{3-p} \langle \nabla|\nabla u|,\nu\rangle.
\end{split}
\end{equation}
The similar equality also holds on $\Sigma_{t_{2}}$. Together with \eqref{choice-eta} and Proposition~\ref{Regularity-approximation-stability}, we have
\begin{equation}\label{mono-equ-3}
\begin{split}
&\quad \int_{\Sigma_{t_{1}}}\frac{1-\eta}{3-p}t_{1}^{-\b}\Delta u -\int_{\Sigma_{t_{2}}}\frac{1-\eta}{3-p}t_{2}^{-\b}\Delta u \\
&= -\frac{p-2}{3-p}\left( t_{1}^{-\b} F'(t_{1})-t_{2}^{-\b} F'(t_{2})\right)
+\left(t^{-\b}\int_{\Sigma_t} \frac{p-2-\eta}{3-p}\cdot\frac{\eta-1}{\eta}\cdot\Delta u \right)\bigg|_{t_{2}}^{t_{1}} \\[2mm]
&= -(p-2)(G(t_{1})-G(t_{2}))+\mathbf{E}_{t_{1},t_{2}},
\end{split}
\end{equation}
where
\begin{equation}
\mathbf{E}_{t_{1},t_{2}}
= \left(t^{-\b}\int_{\Sigma_t} \frac{p-2-\eta}{3-p}\cdot\frac{\eta-1}{\eta}\cdot\Delta u \right)\bigg|_{t_{2}}^{t_{1}}.
\end{equation}

For the first Ricci term in \eqref{A+B}, we apply co-area formula, the traced Gauss equation and Sard's Theorem to see that
\begin{equation}
\begin{split}
&-\int_{\Omega_{t_1,t_2}} u^{-\b} |\nabla u|^{-1} \Ric(\nabla u,\nabla u)\\
= {} & -\int_{(t_1,t_2)\cap \mathcal{U}} t^{-\b} \left( \int_{\Sigma_t} \Ric(\nu,\nu)\right)\\
= {} & -\int_{(t_1,t_2)\cap \mathcal{U}} t^{-\b} \left( \int_{\Sigma_t} \frac12 R_M -K_{\Sigma_t}-\frac12 |A|^2+\frac12 H^2 \right),
\end{split}
\end{equation}
where $\mathcal{U}$ denotes the set of all regular values of $u$. Using \eqref{regularozed-harmonic-2}, we deduce that for regular value $t\in\mathcal{U}$,
\begin{equation}
\left\{
\begin{array}{ll}
\displaystyle|\nabla u|^2 |A|^2 =|\nabla^2 u|^2 -2|\nabla |\nabla u||^2+ \langle \nabla |\nabla u|, \nu \rangle^2;\\[2mm]
\displaystyle|\nabla u|^2 H^2 =(1+\eta)^2   \langle \nabla |\nabla u|, \nu \rangle^2.
\end{array}
\right.
\end{equation}
Together with the assumptions $R_{M}\geq0$, Lemma~\ref{one-comp} and Guass-Bonnet formula, we deduce that
\begin{equation}\label{mono-equ-4}
\begin{split}
&\quad -\int_{\Omega_{t_1,t_2}} u^{-\b} |\nabla u|^{-1} \Ric(\nabla u,\nabla u) \\
&\leq   \frac{4\pi}{1-\beta}(t_{2}^{-\beta+1}-t_{1}^{-\beta+1})
+\frac{1}{2}\int_{(t_1,t_2)\cap \mathcal{U}} t^{-\beta}\int_{\Sigma_{t}}|\nabla u|^{-2}|\nabla^{2}u|^{2} \\
&\quad  -\int_{(t_1,t_2)\cap \mathcal{U}} t^{-\beta}\int_{\Sigma_{t}}|\nabla u|^{-2}|\nabla|\nabla u||^{2}
-\int_{(t_1,t_2)\cap \mathcal{U}} t^{-\beta}\int_{\Sigma_{t}}\frac{\eta(\eta+2)}{2}|\nabla u|^{-2}\langle\nabla|\nabla u|,\nu\rangle^{2}.
\end{split}
\end{equation}

For the second Ricci term in \eqref{A+B}:
\begin{equation*}
\int_{\Omega_{t_1,t_2}}\frac{\eta-p+2}{3-p} u^{-\b} |\nabla u|^{-1} \Ric(\nabla u,\nabla u)
= \frac{p-2}{3-p}\int_{\Omega_{t_1,t_2}}\frac{\e|\nabla u|}{|\nabla u|^{2}+\e}u^{-\beta}\Ric_{M}(\nu,\nu).
\end{equation*}
Thanks to the Cauchy-Schwarz inequality $2\e^{1/2}|\nabla u|\leq|\nabla u|^{2}+\e$, we obtain
\begin{equation}\label{mono-equ-5}
\int_{\Omega_{t_1,t_2}}\frac{\eta-p+2}{3-p} u^{-\b} |\nabla u|^{-1} \Ric(\nabla u,\nabla u)
\leq  C\e^{1/2}.
\end{equation}

For the terms on the fourth line in \eqref{A+B}, since the critical value of $u$ is of measure zero,
\begin{equation}\label{mono-equ-2}
\begin{split}
&\quad \b\int_{\Omega_{t_1,t_2}}\frac{1-\eta}{3-p} u^{-\b-1} |\nabla u|\left(\left\langle \nabla|\nabla u|,\frac{\nabla u}{|\nabla u|}\right\rangle-\Delta u  \right)\\
&= \b\int_{\Omega_{t_1,t_2}}\frac{1-\eta}{3-p} (1+\eta) u^{-\b-1}  \langle\nabla |\nabla u|,\nabla u \rangle\\
&=\frac{\b(p-1)}{3-p}\int_{\Omega_{t_1,t_2}}(1-\eta) u^{-\b-1}  \langle\nabla |\nabla u|,\nabla u \rangle\\
&\quad + \b\int_{\Omega_{t_1,t_2}} \frac{(1-\eta)(\eta-p+2)}{3-p}u^{-\b-1} \langle\nabla |\nabla u|,\nabla u \rangle\\
&=\frac{\b(p-1)}{3-p}\int_{t_1}^{t_2}t^{-\b-1} F'(t)+ \b\int_{\Omega_{t_1,t_2}} \frac{(1-\eta)(\eta-p+2)}{3-p}u^{-\b-1} \langle\nabla |\nabla u|,\nabla u \rangle\\
&= \frac{\b(p-1)}{3-p} \left(t_2^{-\b-1} F(t_2)-t_1^{-\b-1}F(t_1) \right)+\frac{\b(\b+1)(p-1)}{3-p}\int_{t_1}^{t_2} t^{-\b-2}F(t) \\
&\quad  +\b\int_{\Omega_{t_1,t_2}} \frac{(1-\eta)(\eta-p+2)}{3-p}u^{-\b-1} \langle\nabla |\nabla u|,\nabla u \rangle.
\end{split}
\end{equation}

For the terms in fifth and sixth line in \eqref{A+B}, by \eqref{regularized-harmonic} and co-area formula,
\begin{equation}\label{mono-equ-6}
\begin{split}
& \int_{\Omega_{t_1,t_2}} \left(\frac{-\eta' |\nabla u|}{3-p}-\frac{1-\eta}{3-p}
-\frac{(1-\eta)\eta}{3-p}\right) u^{-\b}\Delta u \cdot |\nabla u|^{-2}\left\langle\nabla |\nabla u|,\nabla u \right\rangle \\
= {} & \int_{(t_1,t_2)\cap \mathcal{U}} t^{-\beta}\int_{\Sigma_{t}}\frac{1-\eta}{3-p}|\nabla u|^{-2}
\left(\frac{\eta'}{1-\eta}|\nabla u|+1+\eta\right)\eta\cdot\langle\nabla|\nabla u|,\nu\rangle^{2}
\end{split}
\end{equation}
and
\begin{equation}\label{mono-equ-7}
\int_{\Omega_{t_1,t_2}} \frac{\eta'}{3-p} u^{-\b} |\nabla |\nabla u||^2
= \int_{(t_1,t_2)\cap \mathcal{U}} t^{-\beta}\int_{\Sigma_{t}}\frac{\eta'}{3-p}\cdot|\nabla u|^{-1}\cdot\big|\nabla|\nabla u|\big|^{2}.
\end{equation}

To simplify our notations, we define
\begin{equation}
\Lambda(t)=\frac{p-1}{3-p} t^{-\b} F'(t) +\frac{\b(p-1)}{3-p}t^{-\b-1}F(t)+\frac{4\pi}{1-\b}t^{-\b+1}.
\end{equation}
Substituting \eqref{mono-equ-3}, \eqref{mono-equ-4}, \eqref{mono-equ-5}, \eqref{mono-equ-2}, \eqref{mono-equ-6} and \eqref{mono-equ-7} into \eqref{A+B},
\begin{equation}\label{computation 1}
\begin{split}
& \Lambda(t_1)-\Lambda(t_2) \\
\leq {} & -\int_{(t_1,t_2)\cap \mathcal{U}}t^{-\beta}\int_{\Sigma_{t}}\left(\frac{1-\eta}{3-p}-\frac{1}{2}\right)\cdot|\nabla u|^{-2}\cdot
|\nabla^{2}u|^{2} \\
& +\int_{(t_1,t_2)\cap \mathcal{U}} t^{-\beta}\int_{\Sigma_{t}}\frac{p-2-\eta}{3-p}\cdot |\nabla u|^{-2}\cdot|\nabla|\nabla u||^{2}\\
& -\int_{(t_1,t_2)\cap \mathcal{U}} t^{-\beta}\int_{\Sigma_{t}}\frac{\eta(\eta+2)}{2}\cdot|\nabla u|^{-2}\cdot\langle\nabla|\nabla u|,\nu\rangle^{2}\\
& +\int_{(t_1,t_2)\cap \mathcal{U}} t^{-\beta}\int_{\Sigma_{t}}\frac{1-\eta}{3-p}\cdot|\nabla u|^{-2}\cdot
\left(\frac{\eta'}{1-\eta}\cdot|\nabla u|+1+\eta\right)\cdot\eta\cdot\langle\nabla|\nabla u|,\nu\rangle^{2} \\
& +\int_{(t_1,t_2)\cap \mathcal{U}} t^{-\beta}\int_{\Sigma_{t}}\frac{\eta'}{3-p}\cdot|\nabla u|^{-1}\cdot\big|\nabla|\nabla u|\big|^{2} \\
& +\b \int_{(t_1,t_2)\cap \mathcal{U}} t^{-\b-1} \int_{\Sigma_t}\frac{(1-\eta)(\eta+2-p)}{3-p}  \langle \nabla|\nabla u|,\nu\rangle\\
& +\frac{\beta(\beta+1)(p-1)}{3-p}\int_{(t_1,t_2)\cap \mathcal{U}}t^{-\beta-2}F(t)+\mathbf{E}_{t_1,t_2}+C\sqrt{\e} \\
= {} & \sum_{k=1}^6\mathbf{I}_k+\frac{\beta(\beta+1)(p-1)}{3-p}\int_{(t_1,t_2)\cap \mathcal{U}} t^{-\beta-2}F(t)+\mathbf{E}_{t_1,t_2}+C\sqrt{\e}.
\end{split}
\end{equation}

\medskip
\noindent
{\bf Step 3.} Estimate each term in \eqref{computation 1}.
\medskip

Using $1<p\leq2$ and $p-2\leq\eta\leq0$, we obtain
\begin{equation}
\frac{1-\eta}{3-p}-\frac{1}{2} \geq 0, \ \
\frac{\eta-p+2}{3-p} \geq 0.
\end{equation}
By the improved Kato inequality (Lemma \ref{improved-Kato}) and $\langle \nabla|\nabla u|,\nu\rangle^2\leq|\nabla^{2}u|^{2}$,
\begin{equation}\label{Kato}
\mathbf{I}_1\leq -\int_{(t_1,t_2)\cap \mathcal{U}} t^{-\b} \int_{\Sigma_t} \left( \frac{1-\eta}{3-p}-\frac12 \right) \frac{\eta^2+2\eta+3}{2} |\nabla u|^{-2} \langle \nabla|\nabla u|,\nu\rangle^2
\end{equation}
and
\begin{equation}
\mathbf{I}_{2} \leq -\int_{(t_1,t_2)\cap \mathcal{U}} t^{-\b} \int_{\Sigma_t} \frac{\eta-p+2}{3-p} |\nabla u|^{-2} \langle \nabla|\nabla u|,\nu\rangle^2.
\end{equation}

For the terms involving $\eta'$, since $\eta'\leq 0$ and $|\langle \nabla |\nabla u|,\nu \rangle |\leq |\nabla |\nabla u||$, we have 
\begin{equation}
    \frac{\eta'}{3-p}|\nabla u|^{-1} |\nabla |\nabla u||^2+ \frac{\eta' \eta}{3-p}|\nabla u|^{-1} |\langle \nabla |\nabla u|,\nu \rangle^2\leq 0.
\end{equation}
and hence 
\begin{equation}
\sum_{k=1}^5\mathbf{I}_k 
\leq \int_{(t_1,t_2)\cap \mathcal{U}} t^{-\b} \int_{\Sigma_t} \Upsilon\cdot |\nabla u|^{-2} \langle \nabla|\nabla u|,\nu\rangle^2
\end{equation}
where
\begin{equation*}
\begin{split}
\Upsilon&=-\left( \frac{1-\eta}{3-p}-\frac12 \right)\cdot \frac{\eta^2+2\eta+3}{2}-\frac{\eta(\eta+2)}{2}
+\frac{p-2-\eta}{3-p}+\frac{\eta(1-\eta)}{3-p}\left(1+\eta \right)\\
&=-\frac{1}{2}\cdot\frac{1-\eta}{3-p}\cdot\frac{\eta^2+2\eta+3}{2}
-\frac{1-\eta}{3-p}\cdot\frac{\eta^{2}+2\eta}{2}
-\frac{1}{4}\cdot\frac{\eta+2-p}{3-p}(1+\eta)^{2}
+\frac{(1-\eta)}{3-p}\left(\eta+\eta^{2} \right)\\
&=-\frac{1-\eta}{4(3-p)}\cdot (3-\eta)(\eta+1)-\frac{\eta+2-p}{4(3-p)}(1+\eta)^{2}.
\end{split}
\end{equation*}
By $p-2\leq\eta\leq0$, we see that
\begin{equation}
\begin{split}
\Upsilon \leq {} & -\frac{1-\eta}{4(3-p)}(5-p)(p-1)- \frac{\eta+2-p}{4(3-p)}(1+\eta)^{2} \\
\leq {} & -\frac{(5-p)(p-1)}{4(3-p)^2}(1-\eta)^2-\frac{\eta+2-p}{4(3-p)}(1+\eta)^{2}.
\end{split}
\end{equation}
Then
\begin{equation}\label{I 1-5}
\begin{split}
\sum_{k=1}^5\mathbf{I}_k
\leq {} & -\frac{(5-p)(p-1)}{4(3-p)^2}\cdot\int_{(t_1,t_2)\cap \mathcal{U}}t^{-\b} \int_{\Sigma_s} (1-\eta)^2|\nabla u|^{-2} \langle \nabla|\nabla u|,\nu \rangle^2 \\
&-\int_{(t_1,t_2)\cap \mathcal{U}} t^{-\b} \int_{\Sigma_t} \frac{\eta-p+2}{4(3-p)}(1+\eta)^{2} |\nabla u|^{-2} \langle \nabla|\nabla u|,\nu \rangle^2
\end{split}
\end{equation}
On the other hand,
\begin{equation}\label{3rd-ORDER}
\begin{split}
\mathbf{I}_6 &\leq \int_{(t_1,t_2)\cap \mathcal{U}} t^{-\b} \int_{\Sigma_t} \frac{\eta-p+2}{4(3-p)}(1+\eta)^{2} |\nabla u|^{-2} \langle \nabla|\nabla u|,\nu \rangle^2\\
&\quad +\beta^{2}\int_{\Omega_{t_1,t_2}} u^{-\b-2}  \frac{(1-\eta)^2(\eta-p+2)}{(1+\eta)^{2}(3-p)} |\nabla u|^3.
\end{split}
\end{equation}
Substituting \eqref{I 1-5} and \eqref{3rd-ORDER} into \eqref{computation 1},
\begin{equation*}
\begin{split}
\Lambda(t_{1})-\Lambda(t_{2})&\leq -\frac{(5-p)(p-1)}{4(3-p)^2}\int_{(t_1,t_2)\cap \mathcal{U}}
t^{-\b} \int_{\Sigma_t} (1-\eta)^2|\nabla u|^{-2} \langle \nabla|\nabla u|,\nu \rangle^2\\
&\quad +\beta^{2}\int_{\Omega_{t_1,t_2}} u^{-\b-2}  \frac{(1-\eta)^2(\eta-p+2)}{(1+\eta)^{2}(3-p)} |\nabla u|^3\\
&\quad +\frac{\beta(\beta+1)(p-1)}{3-p}\int_{(t_1,t_2)\cap \mathcal{U}} t^{-\beta-2}F(t)+\mathbf{E}_{t_1,t_2}+C\e^{1/2}.
\end{split}
\end{equation*}
The second term can be controlled by $C\e$ thanks to Proposition \ref{Regularity-approximation-stability}. For the first term, the Cauchy-Schwarz inequality
\begin{equation}\label{CS}
\begin{split}
\left|F'(t)\right| = {} & \left|\int_{\Sigma_t}(1-\eta)\langle \nabla|\nabla u|,\nu\rangle\right| \\
\leq {} & F(t)^{1/2} \cdot \left(\int_{\Sigma_t} (1-\eta)^{2}|\nabla u|^{-2}\langle \nabla|\nabla u|,\nu\rangle^2 \right)^{1/2}
\end{split}
\end{equation}
shows
\begin{equation}
-(1-\eta)^2|\nabla u|^{-2} \langle \nabla|\nabla u|,\nu \rangle^2
\leq -F^{-1}(t)\cdot(F'(t))^{2}.
\end{equation}
Then
\begin{equation}\label{errorterm2}
\begin{split}
\Lambda(t_1)-\Lambda(t_2)
&\leq -\frac{(5-p)(p-1)}{4(3-p)^{2}}\int_{(t_1,t_2)\cap \mathcal{U}}t^{-\beta}\cdot F^{-1}(t)\cdot(F'(t))^{2} \\
& \quad +\frac{\beta(\beta+1)(p-1)}{3-p}\int_{t_1}^{t_2}t^{-\beta-2}F(s)+\mathbf{E}_{t_1,t_2}+C\e^{1/2}.
\end{split}
\end{equation}
Using the Cauchy-Schwarz inequality
\begin{equation}\label{square}
2t^{-1}\lambda F'(t) \leq  F^{-1}(t)(F'(t))^2+ \lambda^2 t^{-2}F(t)
\end{equation}
where $\lambda$ satisfies
\begin{equation}\label{Choice-lambda}
0=-2\lambda(5-p)(\b+1)+4(3-p)\b(\b+1)+\lambda^2(5-p),
\end{equation}
we obtain
\begin{equation}
\begin{split}
& \quad \Lambda(t_1)-\Lambda(t_2)\\
&\leq  -\frac{(5-p)(p-1)}{4(3-p)^{2}}\int_{(t_1,t_2)\cap \mathcal{U}}\left(2t^{-\beta-1}\lambda F'(t)-\lambda^{2}t^{-2-\beta}F(t)\right) \\
&\quad  +\frac{\beta(\beta+1)(p-1)}{3-p}\int_{(t_1,t_2)\cap \mathcal{U}}t^{-\beta-2}F(t)+\mathbf{E}_{t_1,t_2}+C\e^{1/2}\\
&=\frac{\lambda (p-1)(5-p)}{2(3-p)^2} \left(t_{1}^{-\b-1}F(t_{1})-t_{2}^{-\b-1}F(t_{2}) \right)+\mathbf{E}_{t_1,t_2}+C\e^{1/2}.
\end{split}
\end{equation}
This completes the proof by rearranging.
\end{proof}

\subsection{Some discussions}
Let $t_{1}<t_{2}$ be two regular values of the original $p$-Green function $\hat{u}$. If $p\in(1,3)$ and $\hat{u}$ is smooth on
\begin{equation}
\Omega_{t_1,t_2}=\{x\in M: t_1\leq \hat{u}(x)\leq t_2\},
\end{equation}
then the same argument of Theorem \ref{monotonicity-regularized-THM} (without additional regularization) shows that the function
\begin{equation}
\mathcal{H}(t)= t^{-\b} F'(t)+\left(\b-\frac{\lambda(5-p)}{2(3-p)}\right)t^{-\b-1}F(t) -\frac{4\pi(3-p)}{(p-1)(\b-1)}t^{-\b+1}
\end{equation}
satisfies
\begin{equation}
\mathcal{H}(t_{1}) \leq \mathcal{H}(t_{2}).
\end{equation}
For later use, let us point out the case when the equality holds. When $\mathcal{H}(t_{1})=\mathcal{H}(t_{2})$, the equalities of \eqref{mono-equ-4}, \eqref{Kato}, \eqref{CS} and \eqref{square} hold. Then for any regular values $t\in[t_1,t_2]$, we have the following:
\begin{enumerate}\setlength{\itemsep}{1mm}
\item[(i)]  $R_M= 0$ on $\S_t$ and $\int_{\S_t}{K_{\S_t}} =  4\pi$. Hence, $\S_t$ is topologically a sphere;
\item[(ii)] For each point on $\S_t$, if we choose $\{e_j\}_{j=1}^3$ such that $e_1=\nu=\frac{\nabla\hat{u}}{|\nabla\hat{u}|}$, then we have
\begin{equation*}\label{traceless2ndff}
\begin{split}
\hat{u}_{12}= {} & \hat{u}_{13}=\hat{u}_{23}=0,\\
\hat{u}_{22}= {} & \hat{u}_{33}=-\frac{p-1}{2}\hat{u}_{11}=\frac{1}{2}H|\Na \hat{u}|.
\end{split}
\end{equation*}
It follows that $\Na^{\S}|\Na \hat u|=0$ on $\S_t$ and the second fundamental form of $\S_t$ has vanishing traceless part;
\item[(iii)] There exists $A_t\in \R$ such that on $\S_t$,
\begin{equation*}\label{Hu}
|\Na \hat u|^{2} = A_t \la\Na |\Na \hat u|,\nu \ra;
\end{equation*}
\item[(iv)] $F'(t)=\lambda t^{-1}F(t)$.
\end{enumerate}

\section{Proof of Main Theorems}

The monotonicity in Theorem~\ref{main-Thm} and Theorem \ref{generalized F Thm} will follow from the following slightly more general theorem. The flexibility will allow us to study the case of harmonic function as proposed in \cite{ChodoshLi2021}.

\begin{thm}\label{generalized-main}
Under the assumptions of Theorem \ref{main-Thm}, if $\lambda>0$ and $\b>1$ satisfy  $(5-p)+(3p-7)\b\ge 0$ and
\begin{equation}
\frac{\lambda(5-p)(\b+1)}{2(3-p)}= \b(\b+1)+\frac{\lambda^2 (5-p)}{4(3-p)},
\end{equation}
then we have
\begin{equation}\label{mono-1-new}
    \mathcal{G}(t)\leq \frac{3-p}{p-1}\cdot\frac{4\pi}{\b-1} t
\end{equation}
for all $t>0$ which is a regular value of $\hat u$, 
where $$\mathcal{G}(t)= F'(t)+  \left[\b -\frac{\lambda (5-p)}{2(3-p)}\right]t^{-1} F(t). $$
Moreover,
\begin{equation}\label{mono-2-new}
\mathcal{I}(t) = t^{\b -\frac{\lambda (5-p)}{2(3-p)}}F(t)
-\frac{4\pi}{(\b-1)(\b -\frac{\lambda (5-p)}{2(3-p)}+2)}\left(\frac{3-p}{p-1}\right)t^{\b -\frac{\lambda (5-p)}{2(3-p)}+2}
\end{equation}
is non-increasing for all $t>0$.
\end{thm}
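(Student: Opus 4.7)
The plan is to pass to the $\e\to 0$ limit in the almost-monotonicity formula of Theorem \ref{monotonicity-regularized-THM}, pin down the additive constant at $t=\infty$ via the Euclidean asymptotics of $\hat u$ at the pole $x_0$ (Proposition \ref{asymptotic-Green-pole}), and then rearrange. For brevity set $\gamma:=\beta-\frac{\lambda(5-p)}{2(3-p)}$ and
\[
\mathcal{H}(t) := t^{-\beta}F'(t) + \gamma\, t^{-\beta-1}F(t) - \frac{4\pi(3-p)}{(p-1)(\beta-1)}\, t^{-\beta+1},
\]
so that $\mathcal{G}(t)-\frac{4\pi(3-p)}{(p-1)(\beta-1)}t = t^{\beta}\mathcal{H}(t)$ and, at regular values, $\mathcal{I}'(t)=t^{\gamma+\beta}\mathcal{H}(t)$. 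With the prescribed $(\lambda,\beta)$, the equality assumed in the hypothesis is exactly the one that makes the Cauchy--Schwarz step \eqref{square} in Theorem \ref{monotonicity-regularized-THM} tight, so that theorem delivers
$\mathcal{H}_\e(t_1)-\mathcal{H}_\e(t_2)\leq \mathbf{E}_{\e,t_1,t_2}+C\sqrt{\e}$ for regular values $t_1<t_2$ of $u_\e$.

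For \eqref{mono-1-new}, fix regular values $t_1<t_2$ of $\hat u$. By Proposition \ref{Regularity-approximation-stability} (i), (iii) the convergence $u_\e\to\hat u$ is $C^\infty$ on neighborhoods of $\Sigma_{t_1}\cup\Sigma_{t_2}$, so $t_i$ are regular values of $u_\e$ for $\e$ small and $F_\e(t_i)\to F(t_i)$, $F'_\e(t_i)\to F'(t_i)$. The error $\mathbf{E}_{\e,t_1,t_2}$ carries the factor $p-2-\eta_\e=(p-2)\e/(|\nabla u_\e|^{2}+\e)$, which vanishes uniformly on those level sets; hence $\mathbf{E}_{\e,t_1,t_2}+C\sqrt{\e}\to 0$ and we obtain $\mathcal{H}(t_1)\leq \mathcal{H}(t_2)$. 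To fix the constant at infinity, Proposition \ref{asymptotic-Green-pole} yields by direct calculation
\[
F(t)=4\pi\bigl(\tfrac{3-p}{p-1}\bigr)^{2}t^{2}+o(t^{2}),\qquad F'(t)=8\pi\bigl(\tfrac{3-p}{p-1}\bigr)^{2}t+o(t)\quad\text{as }t\to\infty,
\]
so $\mathcal{H}(t)=O(t^{1-\beta})\to 0$ because $\beta>1$. Since $\hat u$ is smooth near $x_0$, Sard's theorem supplies regular values $t_2^{(k)}\to\infty$, and taking $t_2=t_2^{(k)}$ gives $\mathcal{H}(t_1)\leq 0$; multiplying by $t_1^{\beta}$ is \eqref{mono-1-new}.

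For the monotonicity of $\mathcal{I}$ in \eqref{mono-2-new}, the difficulty is that $F$ (in the extended sense of Theorem \ref{generalized F Thm}) need not be differentiable and regular values of $\hat u$ may fail to be dense, so the pointwise identity $\mathcal{I}'(t)=t^{\gamma+\beta}\mathcal{H}(t)$ cannot be integrated naively. The plan is to work at the $\e$-level, where $u_\e\in C^{\infty}$ and Sard's theorem is applicable: sending $t_2\to\infty$ through regular values of $u_\e$ in the almost-monotonicity, combined with a uniform-in-$\e$ version of the asymptotic above (justified because the regularization is negligible where $|\nabla u_\e|$ is large, as is the case near $x_0$), produces $\mathcal{H}_\e(t_1)\leq C(t_1)\sqrt{\e}$ for regular values $t_1$ of $u_\e$. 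With $F_\e$ locally Lipschitz (Lemma \ref{local Lipschitz}), integrating the pointwise bound $\mathcal{I}_\e'(t)=t^{\gamma+\beta}\mathcal{H}_\e(t)\leq C\sqrt{\e}\, t^{\gamma+\beta}$ over $[t_1,t_2]$ yields $\mathcal{I}_\e(t_2)-\mathcal{I}_\e(t_1)\leq C(t_1,t_2)\sqrt{\e}$; letting $\e\to 0$ via the pointwise convergence $F_\e(t)\to F(t)$ of Lemma \ref{convergence F e and F} then gives $\mathcal{I}(t_2)\leq \mathcal{I}(t_1)$. The main obstacle is precisely this last paragraph: controlling the $\e$-error uniformly on bounded intervals and ensuring that the asymptotics at the pole survive the regularization uniformly, both of which rely on the gradient estimate $|\nabla u_\e|\to|\nabla\hat u|$ near $x_0$ being unaffected by $\e$ since $|\nabla \hat u|$ blows up there.
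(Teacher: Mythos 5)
Your proof of the first inequality \eqref{mono-1-new} matches the paper's: pass to $\e\to 0$ in Theorem~\ref{monotonicity-regularized-THM} between two regular values $t_1<t_2$ of $\hat u$ (where $u_\e\to\hat u$ in $C^\infty$ near $\Sigma_{t_1}\cup\Sigma_{t_2}$, so $\mathbf{E}_{\e,t_1,t_2}\to 0$), obtaining $\mathcal{H}(t_1)\leq\mathcal{H}(t_2)$, then let $t_2\to\infty$ through regular values and use Proposition~\ref{asymptotic-Green-pole} plus $\beta>1$ to conclude $\mathcal{H}\to 0$. The asymptotic $F(t)\sim 4\pi\bigl(\tfrac{3-p}{p-1}\bigr)^2 t^2$ is correct and suffices.

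The second part, however, has a genuine gap. You propose to send $t_2\to\infty$ through regular values of $u_\e$ at fixed $\e>0$, invoking a ``uniform-in-$\e$ version of the asymptotic'' at the pole to conclude $\mathcal{H}_\e(t_1)\leq C(t_1)\sqrt{\e}$. But $u_\e$ is only defined on the bounded domain $D\Subset M\setminus\{x_0\}$ constructed in Section~\ref{sec: pre}; it does not extend to a neighborhood of the pole, and the level sets $\{u_\e=t\}$ for large $t$ simply do not exist. There is no asymptotic of $u_\e$ at $x_0$, uniform or otherwise, and your justification that ``the regularization is negligible where $|\nabla u_\e|$ is large, as is the case near $x_0$'' has no content for $u_\e$. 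Consequently the pointwise bound $\mathcal{H}_\e(t)\leq C\sqrt{\e}$ cannot be obtained this way and the subsequent integration step is not available.

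The correct order of limits is different. Fix a regular value $\tau$ of $\hat u$ inside $D$ (hence regular for $u_\e$ once $\e$ is small), so Theorem~\ref{monotonicity-regularized-THM} gives $\mathcal{H}_\e(t)\leq\mathcal{H}_\e(\tau)+\mathbf{E}_{\e,t,\tau}+C\sqrt{\e}$ for a.e.\ $t$. Use the local Lipschitz property of $F_\e$ (Lemma~\ref{local Lipschitz}) and Sard's theorem to integrate $\bigl(t^{\a}F_\e(t)\bigr)'=t^{\a+\beta}\mathcal{H}_\e(t)+\frac{4\pi(3-p)}{(p-1)(\beta-1)}t^{\a+1}$ over $[t_1,t_2]$, where $\a=\beta-\frac{\lambda(5-p)}{2(3-p)}$. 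The key point you omit is that $\mathbf{E}_{\e,t,\tau}$ need not be pointwise small in $t$ (critical values of $u_\e$ may occur), but its integral is controlled: by the co-area formula, $\int_{(t_1,t_2)\cap\mathcal{U}}t^{\a+\beta}\mathbf{E}_{\e,t,\tau}\,dt$ converts into a bulk integral over $\Omega_{\e,t_1,t_2}$ whose integrand carries the factor $|\nabla u_\e|\,|p-2-\eta_\e|=\frac{\e|p-2|\,|\nabla u_\e|}{|\nabla u_\e|^2+\e}\leq\tfrac12|p-2|\sqrt{\e}$, and then Proposition~\ref{Regularity-approximation-stability}(ii) yields the bound $C\sqrt{\e}$. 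This gives $\mathcal{I}_\e(t_2)\leq\mathcal{I}_\e(t_1)+C(t_2-t_1)|\mathcal{H}_\e(\tau)|+C\sqrt{\e}$; now let $\e\to 0$ using Lemma~\ref{convergence F e and F} and $\mathcal{H}_\e(\tau)\to\mathcal{H}(\tau)$, and only at the very end let $\tau\to\infty$ (using $\mathcal{H}(\tau)\to 0$ from part one) to conclude $\mathcal{I}(t_2)\leq\mathcal{I}(t_1)$.
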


\begin{proof}
For \eqref{mono-1-new}, let $t<\tau$ be two regular values of $\hat{u}$. We first show that $\mathcal{H}(t)\leq \mathcal{H}(\tau)$, where
\begin{equation}
\mathcal{H}(s)=s^{-\b} F'(s)+\left(\b-\frac{\lambda(5-p)}{2(3-p)}\right)s^{-\b-1}F(s) -\frac{4\pi(3-p)}{(p-1)(\b-1)}s^{-\b+1}.
\end{equation}
By Proposition \ref{Regularity-approximation-stability}, when $\e$ is sufficiently small, $t$ and $\tau$ are also regular values of $u_{\e}$. Moreover, $u_{\e}\to\hat{u}$ in $C^{\infty}(U)$ for some neighborhood $U$ of $\Sigma_{t}\cup\Sigma_{\tau}$. This implies
\begin{equation}
\lim_{\e\to0}F_{\e}(t) = F(t), \ \
\lim_{\e\to0}F_{\e}(\tau) = F(\tau),
\end{equation}
\begin{equation}
\lim_{\e\to0}F_{\e}'(t) = F'(t), \ \
\lim_{\e\to0}F_{\e}'(\tau) = F'(\tau), \ \
\lim_{\e\to0}\mathbf{E}_{\e,t,\tau} = 0
\end{equation}
Combining the above with Theorem \ref{monotonicity-regularized-THM}, we obtain $\mathcal{H}(t)\leq \mathcal{H}(\tau)$. By Proposition \ref{asymptotic-Green-pole}, a sufficiently large number must be a regular value of $\hat{u}$, and then
\begin{equation}
\mathcal{H}(t) \leq \lim_{\tau\to+\infty}\mathcal{H}(\tau)=0,
\end{equation}
which implies \eqref{mono-1-new}.

For the monotonicity of $\mathcal{I}(t)$, since $F$ is not known to be absolutely continuous, we consider $F_{\e}$ instead. We first show the almost monotonicity of $\mathcal{I}_{\e}(t)$, where
\begin{equation}
\mathcal{I}_{\e}(t) = t^{\b -\frac{\lambda (5-p)}{2(3-p)}}F_{\e}(t)
-\frac{4\pi}{(\b-1)(\b -\frac{\lambda (5-p)}{2(3-p)}+2)}\left(\frac{3-p}{p-1}\right)t^{\b -\frac{\lambda (5-p)}{2(3-p)}+2}.
\end{equation}
As in the discussion of Section~\ref{sec: pre}, we will work on $D$, which contains
\begin{equation}
\{a<\hat{u}<b\} \Subset D.
\end{equation}
Choosing $b$ sufficiently large such that $\tau>b-2$ are all regular values of $\hat{u}$. When $\e$ is sufficiently small, $\tau>b-1$ are all regular values of $u_{\e}$.
Fix $\e$ and let $t_{1}<t_{2}$ be two regular values of $u_{\e}$ such that $t_{1},t_{2}\in(a,b-3)$. Thanks to Lemma \ref{local Lipschitz}, $F_{\e}$ is locally Lipschitz. Write $\a=\b-\frac{\lambda (5-p)}{2(3-p)}$. We can apply fundamental theorem of calculus to obtain
\begin{equation}\label{I e monotonicity computation}
\begin{split}
t^\a F_\e(t)\Big|_{t_1}^{t_2} &=\int_{(t_1,t_2)\cap \mathcal{U}}\left(t^\a F_\e(t)\right)' dt\\
&\leq \int_{(t_1,t_2)\cap \mathcal{U}} t^{\b+\a}\left(\frac{3-p}{p-1} \cdot\frac{4\pi }{\b-1}  t^{-\beta+1}
+\mathcal{H}_\e(\tau)+\mathbf{E}_{\e,t,\tau}+C\e^{1/2}\right) dt,
\end{split}
\end{equation}
where $\mathcal{U}$ denotes the set of regular value of $u_\e$. By co-area formula,
\begin{equation}
\begin{split}
\int_{(t_1,t_2)\cap \mathcal{U}} t^{\b+\a}\mathbf{E}_{\e,t,\tau}\,dt
= {} &\int_{(t_1,t_2)\cap \mathcal{U}} t^{\a}\left(\int_{\Sigma_t} \frac{(p-2-\eta_{\e})(\eta_{\e}-1)}{(3-p)\eta_{\e}} \Delta u_\e \right) dt\\
= {} & \int_{\Omega_{\e,t_1,t_2}}  \frac{(p-2-\eta_{\e})(1-\eta_{\e})}{(3-p)}u_\e^{\a} \langle\nabla |\nabla u_\e|,\nabla u_\e \rangle \\
\leq {} & C\int_{\Omega_{\e,t_1,t_2}}  |\nabla u_\e|\cdot|p-2-\eta_{\e}|\cdot|\nabla^{2}u_{\e}|,
\end{split}
\end{equation}
for some $C$ independent of $\e$. Using \eqref{choice-eta}, we see that
\begin{equation}
|\nabla u_\e|\cdot|p-2-\eta_{\e}|=\frac{\e|p-2|\cdot|\nabla u_\e|}{  |\nabla u_\e|^2+\e}\leq \frac{1}{2}\e^{1/2}|p-2|.
\end{equation}
Combining this with Proposition \ref{Regularity-approximation-stability} (ii),
\begin{equation}
\int_{(t_1,t_2)\cap \mathcal{U}} t^{\b+\a}\mathbf{E}_{\e,t,\tau}\,dt \leq C\e^{1/2}.
\end{equation}
Substituting this into \eqref{I e monotonicity computation},
\begin{equation}\label{I e monotonicity}
\mathcal{I}_{\e}(t_{2}) \leq \mathcal{I}_{\e}(t_{1})+C(t_{2}-t_{1})|\mathcal{H}_{\e}(\tau)|+C\e^{1/2}.
\end{equation}
Thanks to Sard's theorem and Lemma \ref{local Lipschitz}, \eqref{I e monotonicity} holds for all $t_{1},t_{2}\in(a,b-3)$. Recalling that $\tau$ is regular of $\hat{u}$, we have $\lim_{\e\to0}\mathcal{H}_{\e}(\tau)=\mathcal{H}(\tau)$. Letting $\e\to0$ in \eqref{I e monotonicity} and using Lemma \ref{convergence F e and F}, we obtain
\begin{equation}
\mathcal{I}(t_{2}) \leq \mathcal{I}(t_{1})+C(t_{2}-t_{1})|\mathcal{H}(\tau)|.
\end{equation}
Since $a$ and $b$ are arbitrary, letting $\tau\to+\infty$ and using $\lim_{\tau\to+\infty}\mathcal{H}(\tau)=0$,
\begin{equation}
\mathcal{I}(t_{2}) \leq \mathcal{I}(t_{1}).
\end{equation}
We obtain the monotonicity of $\mathcal{I}$.
\end{proof}

\begin{rem}\label{p 2 3}
When $p\in(2,3)$ and $\hat{u}$ is smooth, Theorem \ref{main-Thm} can be proved by the discussion in Section 3.3 and the above argument. Unlike the general case, we cannot apply regularization method to prove Theorem \ref{main-Thm} unless the extra regularity is assumed. This is because the choice of regularization (i.e. choice of $\phi_\e$) fails to be controlled (especially Step 3) when $p>2$.
\end{rem}

Now the monotonicity in Theorem~\ref{main-Thm} and Theorem \ref{generalized F Thm} follow directly from Theorem~\ref{generalized-main} by taking $(\lambda,\b)=(2,\frac{2}{3-p})$. Indeed this is the Lagrangian multiplier solution which minimizes the decay rate. In particular, this decay rate is corresponding to the Euclidean model. Next we would like to study the rigidity case in the monotonicity.

\begin{proof}[Proof of rigidity in Theorem \ref{main-Thm}]
We first consider the rigidity {\bf (a')}. Equivalently, we may rewrite the assumption as $\mathcal{H}(t_0)=0$. By Proposition~\ref{asymptotic-Green-pole}, all $\tau\gg1$ are regular and $\mathcal{H}(\tau)\to 0$ as $\tau\to +\infty$. Theorem~\ref{main-Thm} implies that for all regular value $t>t_0$, we must have $\mathcal{H}(t)=0$. Since $u\in C^{1,\alpha}_{\loc}$, there exists a maximal open interval $(a,b)$ containing $t_0$ such that $t$ is a regular value of $\hat{u}$ for all $t\in (a,b)$.
Let $t\in I:=[t_0,b)$. Since $t$ is regular, we may apply the (i)-(iv) in Section 3.3.

Write $\nu=\frac{\nabla\hat{u}}{|\nabla \hat{u}|}$. Recall that on the regular level set $\Sigma_{t}$, we have
\begin{equation}\label{rigidity eqn 1}
H|\Na \hat{u}|=-(p-1)\la \Na |\Na \hat{u}|, \nu\ra
\end{equation}
and so
\begin{equation}
F'(t) = -\frac{3-p}{p-1}\int_{\Sigma_{t}}H|\nabla\hat{u}|
= (3-p)\int_{\Sigma_{t}}\la \Na |\Na \hat{u}|, \nu\ra.
\end{equation}
Combining this with (iii) and (iv),
\begin{equation}
2 t^{-1}F(t) = F'(t)
= (3-p)A_{t}^{-1}\int_{\Sigma_{t}}|\nabla\hat{u}|^{2}
= (3-p)A_{t}^{-1}F(t).
\end{equation}
Since $F(t)\neq0$, then
\begin{equation}
A_{t} = \frac{3-p}{2}t.
\end{equation}
Using \eqref{rigidity eqn 1} and (iii),
\begin{equation}
H|\Na \hat{u}|=-(p-1)A_{t}^{-1}|\nabla\hat{u}|^{2}.
\end{equation}
Hence, on $\Sigma_{t}$,
\begin{equation}\label{gradupreODE}
\frac{|\nabla\hat{u}|}{\hat{u}} = \frac{p-3}{2(p-1)}H.
\end{equation}
Thanks to (i), $\Na^{\S}|\Na\hat{u}|=0$ on $\S_t$, it follows that $|\Na\hat{u}|=f(\hat{u})$ for some $f:I\to(0,\infty)$ and hence $H$ is also a function of $\hat{u}$. In particular, we have
\begin{equation}\label {HODE}
H=-(p-1)\frac{\p |\Na\hat{u}|}{\p \hat u}=-(p-1)f'(\hat{u}).
\end{equation}
As a result, \eqref{gradupreODE} can be expressed as the following ODE:
\begin{equation}\label{rigidity eqn 2}
\frac{f(\hat{u})}{\hat{u}}=\frac{3-p}{2}f'(\hat{u}).
\end{equation}
Hence, there exists $C>0$ such that
\begin{equation}\label{gradufu}
|\Na \hat{u}|=f(\hat{u})=C\hat{u}^{\frac{2}{3-p}}.
\end{equation}
By (i), we know that for all $t\in I$, $\S_t$ has one component only, $P_b:=\{\hat{u}\geq b\}$ is bounded by $\widetilde\S_b:=\{x\in M: x=\lim_{i\to+\infty}x_i, \ \text{where } \hat{u}(x_i)\in I \}$. Assume that $b<\infty$, by continuity of $|\Na \hat{u}|$ and \eqref{gradufu}, there exists $y\in P_b\setminus \widetilde\S_b$ such that $\hat{u}(y)=b$ and $\Na \hat{u}(y)=0$ since $b$ is maximal. Since $\de P_b \subset \widetilde{\Sigma}_{b}$, then $\hat{u}$ achieves an interior minimum in $P_b$. By the Harnack inequality in \cite{WangZhang2011} and hence strong maximum principle, $\hat{u}\equiv b$ on $P_b\setminus \{x_0\}$. Contradiction arises. We can therefore conclude that $I=[t_0,\infty)$. Hence, the superlevel set $\{\hat{u}>t _0\}$ is topologically a ball foliated by level set spheres.

On the other hand, \eqref{rigidity eqn 2} and \eqref{gradufu} shows
\begin{equation}
H = -\frac{2C(p-1)}{3-p}\hat{u}^{\frac{p-1}{3-p}}.
\end{equation}
Combining this with (ii), we see that the function
\begin{equation}
Q=\frac1{2 C^2} \left(\frac{3-p}{p-1}\right)^2\hat{u}^{-\frac{2(p-1)}{3-p}}
\end{equation}
satisfies $\nabla^2 Q=g$ on $\{u>t _0\}$ and hence Ricci identity implies for all $i,j,k$,
\begin{equation}\label{vanishing curvature}
R_{ijk}\,^l Q_l = Q_{;kij}-Q_{;kji}=0.
\end{equation}
We choose $\{e_i\}_{i=1}^3$ so that $e_1=\frac{\nabla\hat{u}}{|\nabla\hat{u}|}$. It is clear that $\nabla Q$ is parallel to $e_{1}$. Then \eqref{vanishing curvature} implies $R_{1221}=R_{1331}=0$. By (i), we know $R_M=0$ and so $R_{2332}=0$. Since $\{e_2,e_3\}$ is arbitrary, $\mathrm{Rm}\equiv 0$ on $\{\hat{u}>t_0\}$. Combining this with the foliation, we obtain the isometry.

\medskip

We now consider the rigidity {\bf (b')}. For any regular value $t$ of $\hat{u}$, define
\begin{equation}
\mF(t):=t^{-1}F(t)- 4\pi\left(\frac{3-p}{p-1}\right)^2 t.
\end{equation}
Then the assumption is equivalent to say that
\begin{equation}
   \mF(t_0)=\mF(s_0)
\end{equation}
for some regular values $s_0<t_0$. The monotonicity {\bf (a)} implies that $\mF(t)$ is constant for all regular values $s_0<t<t_0$. Since $u\in C^{1,\alpha}_{\loc}$, there exists a maximal open interval $(a,b)$ containing $s_0$ such that $t$ is a regular value for $\hat{u}$ for all $t\in (a,b)$. We may differentiate to obtain $\mathcal{F}'(s_0)=0$ which in turn implies  $\mH(s_0)=0$ and hence the result follows from the rigidity {\bf (a')}.  
\end{proof}

We now establish the upper bound of $F(t)$ under rough Ricci lower bound. Basically, Corollary~\ref{main-monot-zerothorder} follows from Theorem \ref{generalized F Thm} and gradient estimate \cite[Theorem 1.1]{WangZhang2011}.

\begin{proof}[Proof of Corollary~\ref{main-monot-zerothorder}]
For monotonicity {\bf (c)}, we first consider $p\in (1,2)$ which does not require perturbation argument. Recall from \eqref{bound gradient p-1} that for all $t\in(0,1)$,
\begin{equation}
\int_{\{\hat{u}=t\}\cap\{|\nabla \hat{u}|>0\}}|\nabla\hat{u}|^{p-1} \leq C
\end{equation}
for some constant $C$ independent of $\e$. Together with the gradient estimate \cite[Theorem 1.1]{WangZhang2011}, we deduce that for any $t\in (0,1)$,
\begin{equation}\label{F upper bound}
    \begin{split}
        F(t)&=\lim_{\delta\to 0}\int_{\Sigma_t\cap \{|\nabla \hat{u}|>\delta\}}|\nabla\hat{u}|^2\\
        &=\lim_{\delta\to 0}\int_{\Sigma_t\cap \{|\nabla \hat{u}|>\delta\}}|\nabla \hat{u}|^{p-1}\cdot |\nabla \hat{u}|^{3-p}\\[1.5mm]
        &\leq C\sup_{\Sigma_t} |\nabla \hat{u}|^{3-p}\\[1.5mm]
        &\leq C t^{3-p}.
    \end{split}
\end{equation}
If $1<p<2$, it is clear that $t^{-1}F(t)\to0$ as $t\to0$. Combining this with Theorem \ref{generalized F Thm}, we obtain {\bf (c)}.

If $p=2$, we use the observation in \cite[Section 4]{ChodoshLi2021}. Choose $\b_i$ and $\lambda_i$ so that
\begin{equation}
\frac{3\lambda_i(\b_i+1)}{2}=\b_i(\b_i+1)+\frac{3\lambda_i^2}{4},
\end{equation}
$\b_i\to 2, \lambda_i\to 2$ and $\b_i-\frac{3\lambda_i}{2}$ decrease to $-1$. Since the assumptions on $\b,\lambda$ are open conditions, this is always do-able, see \cite[Lemma 15]{ChodoshLi2021}. By the same argument of \eqref{F upper bound},
\begin{equation}
    t^{\b_i-\frac{3\lambda_i}{2}}F(t) \to 0, \ \ \text{as $t\to 0$}
\end{equation}
for all $i>0$. Then {\bf (c)} follows from the monotonicity of $t^{\b_i-\frac{3\lambda_i}{2}}F(t)$ by letting $t\to 0$ and followed by $i\to +\infty$.

To prove monotonicity {\bf (d)}, we adapt the argument in \cite{MunteanuWang2021}. Thanks to Lemma \ref{gradient p-1 1} and monotonicity {\bf (c)},
\begin{equation}\label{monotonicity d}
    \begin{split}
    1&=\int_{\Sigma_t}|\nabla\hat{u}|^{p-1}\\
    &\leq \left(\int_{\Sigma_t}|\nabla\hat{u}|^2\right)^\frac{p-1}{2} \mathrm{Area}(\Sigma_t)^\frac{3-p}{2}\\
    &\leq \left[4\pi \left(\frac{3-p}{p-1} \right)^2t^2\right]^\frac{p-1}{2}\mathrm{Area}(\Sigma_t)^\frac{3-p}{2},
    \end{split}
\end{equation}
which implies {\bf (d)}.

Thanks to \eqref{monotonicity d}, at a regular value $t_{0}$, it is clear that {\bf(c)} holds if and only if {\bf (d)} holds. Then it suffices to prove the rigidity of {\bf (c)}. Let $I=(a,b)$ be the maximal connected open interval containing $t_0$ so that $t$ is regular for all $t\in I$. Define
\[
\mathcal{F}(t) = t^{-1}F(t)-4\pi\left(\frac{3-p}{p-1}\right)^2t.
\]
Then {\bf (c)} holds at $t_{0}$ implies $\mathcal{F}(t_{0})=0$. Thanks to Theorem \ref{generalized F Thm}, we obtain $\mathcal{F}(t)=0$ for all $t\in(0,t_{0}]$ and so $\mathcal{F}'(t_{0})=0$. Then Theorem \ref{main-Thm} {\bf (a')} implies that $b=+\infty$. By the slight modification of the proof of Theorem \ref{main-Thm} {\bf (a')}, we can show that $a=0$. We sketch the argument here. It is clear that $\mathcal{F}'(t)=0$ for all $t\in (a,t_{0})$. It then follows that $\mathcal{H}(t)=0$ for all $t\in (a,t_{0})$. The same argument of Theorem \ref{main-Thm} {\bf (a')} shows \eqref{gradufu} holds up to the closure of $I$. By the maximality of the interval $I$ and maximum principle, $a$ must be zero. Using Theorem \ref{main-Thm} {\bf (a')} again, $(M,g)$ is isometric to the Euclidean space.
\end{proof}


\begin{thebibliography}{10}

\bibitem{AFM20} Agostiniani, V.; Fogagnolo, M.; Mazzieri, L., {\sl Sharp geometric inequalities for closed hypersurfaces in manifolds with nonnegative Ricci curvature}, Invent. Math. 222 (2020), no. 3, 1033--1101.

\bibitem{AFM19} Agostiniani, V.; Fogagnolo, M.; Mazzieri, L., {\sl Minkowski Inequalities via Nonlinear Potential Theory}, preprint, arXiv:1906.00322.

\bibitem{AM17} Agostiniani, V.; Mazzieri, L., {\sl On the geometry of the level sets of bounded static potentials}, Commun. Math. Phys 355 (2017), 261--301.

\bibitem{AM20} Agostiniani, V.; Mazzieri, L., {\sl Monotonicity formulas in potential theory}, Calc. Var. Partial Differential Equations 59 (2020), no. 1, Paper No. 6, 32 pp.

\bibitem{AMO2021} Agostiniani, V.; Mazzieri, L.; Oronzio, F, {\sl A Green's function proof of the Positive Mass Theorem}, preprint, arXiv:2108.08402.

\bibitem{BKKS2019} Bray, H.; Kazaras, D.; Khuri, M; Stern, D., {\sl Harmonic functions and the mass of 3-dimensional asymptocially flat Riemannian manifolds}, arXiv:1911.06754.

\bibitem{CNV2015} Cheeger, J.; Naber, A.; Valtorta, D., {\sl Critical sets of elliptic equations}, Comm. Pure Appl. Math. 68 (2015), no. 2, 173--209.

\bibitem{Cheng1975} Cheng, S. Y., {\sl Eigenvalue comparison theorems and its geometric applications}, Math. Z. 143 (1975), no. 3, 289--297.

\bibitem{ChengYau1975} Cheng, S. Y.; Yau, S. T., {\sl  Differential equations on Riemannian manifolds and their geometric applications}, Comm. Pure Appl. Math. 28 (1975), no. 3, 333--354.

\bibitem{ChodoshLi2021} Chodosh, O.; Li, C., {\sl Stable minimal hypersurfaces in $\mathbb{R}^4$}, preprint, arXiv:2108.11462.

\bibitem{Colding2012} Colding, T. H., {\sl  New monotonicity formulas for Ricci curvature and applications. I}, Acta Math. 209 (2012), no. 2, 229--263.

\bibitem{ColdingMinicozzi2013} Colding, T. H.; Minicozzi, W. P., II, {\sl  Monotonicity and its analytic and geometric implications}, Proc. Natl. Acad. Sci. USA 110 (2013), no. 48, 19233--19236.

\bibitem{ColdingMinicozzi2014} Colding, T. H.; Minicozzi, W. P., II, {\sl  Ricci curvature and monotonicity for harmonic functions}, Calc. Var. Partial Differential Equations 49 (2014), no. 3-4, 1045--1059.

\bibitem{ColdingMinicozzi2014b} Colding, T. H.; Minicozzi, W. P., II, {\sl  On uniqueness of tangent cones for Einstein manifolds}, Invent. Math. 196 (2014), no. 3, 515--588.

\bibitem{GarofaloLin1986} Garofalo, N.; Lin, F.-H., {\sl  Monotonicity properties of variational integrals, $A_p$ weights and unique continuation}. Indiana Univ. Math. J. 35 (1986), no. 2, 245--268.


\bibitem{GarofaloLin1987} Garofalo, N.; Lin, F.-H., {\sl Unique continuation for elliptic operators: a geometric-variational approach}, Comm. Pure Appl. Math. 40 (1987), no. 3, 347--366.



\bibitem{Grigoryan1983}
Grigor'yan, A., {\sl On the existence of a Green function on a manifold} (in Russian), Uspekhi Mat. Nauk 38 (1983), 161--162; English transl. in Russian Math. Surveys 38 (1983),
190--191.

\bibitem{Grigoryan1985}
Grigor'yan, A.,  {\sl On the existence of positive fundamental solutions of the Laplace equation on
Riemannian manifolds} (in Russian), Mat. Sb. (N.S.) 128 (1985), 354--363; English
transl. in Math. USSR-Sb. 56 (1987), 349--358.

\bibitem{HardtSimon1989} Hardt, R.; Simon, L., {\sl Nodal sets for solutions of elliptic equations}, J. Differential Geom. 30 (1989), no. 2, 505--522.

\bibitem{Holopainen1992} Holopainen, I., {\sl Positive solutions of quasilinear elliptic equations on Riemannian manifolds}, Proc. London Math. Soc. (3) 65 (1992), no. 3, 651--672.


\bibitem{Holopainen1999} Holopainen, I.,{\sl Volume growth, Green's functions, and parabolicity of ends}, Duke Math. J. 97 (1999), no. 2, 319--346.

\bibitem{HuiskenIlmanen1999} Huisken, G.; Ilmanen, T.,{\sl  The inverse mean curvature flow and the Riemannian Penrose inequality}, J. Differential Geom. 59 (2001), no. 3, 353--437.

\bibitem{KeselmanZorich1996}
Kesel'man, V. M.; Zorich V. A., {\sl On the conformal type of a Riemannian manifold} (in
Russian), Funktsional. Anal. i Prilozhen. 30 (1996), 40--55, 96; English transl. in
Functional Anal. Appl. 30 (1996), 106--117.


\bibitem{KichenassamyVeron1986} Kichenassamy, S.; V\`{e}ron, L., {\sl Singular solutions of the p-Laplace equation}. Math. Ann. 275 (1986), no. 4, 599–615.




\bibitem{KotschwarNi2009}Kotschwar, B.; Ni, L., {\sl Local gradient estimates of $p$-harmonic functions, $1/H$-flow, and an entropy formula}, Ann. Sci. \'{E}c. Norm. Sup\'{e}r. (4) 42 (2009), no. 1, 1--36.

\bibitem{Lewis77} Lewis, J. L., {\sl Capacitary functions in convex rings}, Arch. Rational Mech. Anal. 66 (1977), no. 3, 201--224.

\bibitem{LiTam1992} Li, P.; Tam, L.-F., {\sl Green's functions, harmonic functions, and volume comparison}, J. Differential Geom. 41 (1995), no. 2, 277--318.

\bibitem{Lin1991} Lin, F.-H. {\sl Nodal sets of solutions of elliptic and parabolic equations}, Comm. Pure Appl. Math. 44 (1991), no. 3, 287--308.

\bibitem{ManfrediWeitsman1988} Manfredi, J.; Weitsman, A., {\sl On the Fatou theorem for $p$-harmonic functions}, Communications in Partial Differential Equations 13 (1988), pp. 651--668.

\bibitem{MariRigoliSetti2019} Mari, L.; Rigoli, M.;Setti, A. G.,{\sl On the $1/H$-flow by $p$-laplace approximation: new estimates via fake distances under Ricci lower bounds}, preprint, arXiv:1905.00216, to appear in Amer. J. Math.

\bibitem{Moser2007} Moser, R., {\sl The inverse mean curvature flow and p-harmonic functions}, J. Eur. Math. Soc. (JEMS) 9 (2007), no. 1, 77--83.

\bibitem{Moser2008} Moser, R., {\sl The inverse mean curvature flow as an obstacle problem}, Indiana Univ. Math. J. 57 (2008), no. 5, 2235--2256.

\bibitem{Moser2015} Moser, R., {\sl Geroch monotonicity and the construction of weak solutions of the inverse mean curvature flow}, Asian J. Math. 19 (2015), no. 2, 357--376.

\bibitem{MunteanuWangL2019} Munteanu, O.; Wang, L.,{\sl  Gradient estimate for harmonic functions on K\"ahler manifolds}, Trans. Amer. Math. Soc. 372 (2019), no. 12, 8759--8791.

\bibitem{MunteanuWang2021} Munteanu, O.; Wang, J.,{\sl Comparison theorems for three-dimensional manifolds with scalar curvature bounds}, preprint, arXiv:2105.12103.

\bibitem{MunteanuWang2022} Munteanu, O.; Wang, J.,{\sl Comparison theorems for 3D manifolds with scalar curvature bound, II}, preprint, arXiv:2201.05595.

\bibitem{NaberValtorta2014} Naber, A.; Valtorta, D., {\sl Sharp estimates on the first eigenvalue of the p-Laplacian with negative Ricci lower bound}, Math. Z. 277 (2014), no. 3--4, 867--891.

\bibitem{Stern2019} Stern, D., {\sl Scalar curvature and harmonic maps to $\mathbb{S}^1$}, preprint, arXiv:1908.09754, to appear in J. Differential Geom.

\bibitem{SungWang2014} Sung, C. A.; Wang, J., {\sl  Sharp gradient estimate and spectral rigidity for p-Laplacian}. Math. Res. Lett. 21 (2014), no. 4, 885–904.


\bibitem{Tolksdorf1984} Tolksdorf, P., {\sl Regularity for a more general class of quasilinear elliptic equations}. J. Differential Equations 51 (1984), no. 1, 126--150.

\bibitem{Varopoulos1981} Varopoulos, N., {\sl The Poisson kernel on positively curved manifolds}, J. Funct. Anal. 44 (1981), 359--380.

\bibitem{WangZhang2011} Wang, X.; Zhang, L., {\sl Local gradient estimate for $p$-harmonic functions on Riemannian manifolds}, Comm. Anal. Geom. 19 (2011), no. 4, 759--771.

\end{thebibliography}
\end{document}